\author[]{James Farre}
\author[]{Beatrice Pozzetti}
\author[]{Gabriele Viaggi}
\newcommand{\Addresses}{{
  \bigskip
  \footnotesize  
  \noindent James Farre, \textsc{Max Planck Institute for Mathematics in the Sciences, Leipzig}\par\nopagebreak
  \textit{E-mail address}: \texttt{james.farre@mis.mpg.de}
  
  \noindent Beatrice Pozzetti, \textsc{Mathematical Institute, Heidelberg University, Heidelberg}\par\nopagebreak
  \textit{E-mail address}: \texttt{pozzetti@mathi.uni-heidelberg.de}
  
  \noindent Gabriele Viaggi, \textsc{Mathematical Institute, Sapienza University of Rome, Rome}\par\nopagebreak
  \textit{E-mail address}: \texttt{gabriele.viaggi@uniroma1.it}
  }}
\date{}
\subjclass[]{}
\newcommand{\C}{\mathbb C}
\renewcommand{\P}{\mathbb P}
\newcommand{\R}{\mathbb R}
\newcommand{\calF}{\mathcal F}
\newcommand{\PGL}{{\rm PGL}}
\newcommand{\Gr}{{\rm Gr}}
\newcommand{\bord}{{\partial}}
\newcommand{\G}{\Gamma}
\newcommand{\HH}{\mathbb H}
\newcommand{\g}{\gamma}
\newcommand{\cal}{\mathcal }
\newcommand{\bP}{\mathbb {P}}
\newcommand{\deG}{\partial\Gamma}
\renewcommand{\H}{{\mathbb H}}
\newcommand{\inverse}{^{-1}}
\newcommand{\CP}{\mathbb{CP}^1}
\newcommand{\CPd}{\mathbb{CP}^{d-1}}
\newcommand{\RP}{\mathbb{RP}^1 }
\newcommand{\dH}{\partial \mb{H}^2}
\newcommand{\bH}{{\mb{H}}}
\DeclareMathOperator{\Homeo}{Homeo}
\newcommand{\calC}{\mathcal{C}}
\newcommand{\Stab}{{\rm Stab}}
\newcommand{\Id}{{\rm Id}}
\newcommand{\detG}{\partial^{(2)}\G}
\newcommand{\deH}{\partial\HH}
\newcommand{\cdAR}{{\rm confdim_{AR}}}
\newcommand\quotient[2]{
	\mathchoice
	{
		\text{\raise.5ex\hbox{$#1$}\big/\lower.5ex\hbox{$#2$}}%
	}
	{
		\text{\raise.25ex\hbox{$#1$}\big/\lower.25ex\hbox{$#2$}}%
	}
	{
		#1\,/\,#2
	}
	{
		#1\,/\,#2
	}
}
\newcommand{\mc}[1]{
\mathcal{#1}
}
\newcommand{\mb}[1]{
\mathbb{#1}
}
\newcommand{\PSL}{{\rm PSL}}
\newtheorem*{thm*}{Theorem}
\newtheorem*{cor*}{Corollary}
\newtheorem*{prop*}{Proposition}
\newtheorem{thmA}{Theorem}
\newtheorem{thm}{Theorem}[section]
\newtheorem{cor}[thm]{Corollary}
\newtheorem{lemma}[thm]{Lemma}
\newtheorem{lem}[thm]{Lemma}
\newtheorem{prop}[thm]{Proposition}
\theoremstyle{definition}
\newtheorem*{defi*}{Definition}
\newtheorem{dfn}[thm]{Definition}
\newtheorem{remark}[thm]{Remark}
\theoremstyle{remark}
\newcommand{\nocontentsline}[3]{}
\newcommand{\tocless}[2]{\bgroup\let\addcontentsline=\nocontentsline#1{#2}\egroup}
\title[Topological and geometric restrictions]{Topological and geometric restrictions on hyperconvex representations}
\begin{document}

\begin{abstract}
We study the geometry of hyperconvex representations of hyperbolic groups in ${\rm PSL}(d,\mb{C})$ and establish two structural results:  a group admitting a hyperconvex representation is virtually isomorphic to a Kleinian group, and its hyperconvex limit set in the appropriate flag manifold has Hausdorff dimension strictly smaller than $2$.
\end{abstract}

\maketitle
\vspace{-1cm}
\section{Introduction}

A {\em Kleinian group} is a discrete subgroup $\Gamma<{\rm PSL}(2,\mb{C})$. 
The study of these groups is a rich topic that interlaces 3-dimensional hyperbolic geometry, conformal dynamics, and Teichmüller theory. 
While the quotient of $\H^3$ by a Kleinian group is a complete hyperbolic $3$-orbifold, 
the action on $\mb{CP}^1$ splits into two parts:
\begin{enumerate}[(i)]
    \item{On the {\em limit set} $\Lambda\subset\mb{CP}^1$, the set of accumulation points of the orbits, it is chaotic (ergodic). Typically, this is a fractal object.}
    \item{On the complement $\Omega=\mb{CP}^1-\Lambda$, the {\em domain of discontinuity}, it is properly discontinuous  and gives us a Riemann surface  $\Sigma=\Omega/\Gamma$, possibly  with orbifold points.}
\end{enumerate}

An important class of Kleinian groups are the so-called {\em convex-cocompact} groups, which are discrete subgroups $\Gamma<{\rm PSL}(2,\mb{C})$ leaving invariant a non-empty closed convex set in $\H^3$ on which the action is co-compact. The domain of discontinuity of convex-cocompact groups $\Gamma$ that are not co-compact is always non-empty and the limit set is homeomorphic to the Gromov boundary $\deG$ of $\Gamma$.

\emph{Anosov subgroups} (Definition \ref{def: Anosov}) were introduced by Labourie \cite{L06} and Guichard-Wienhard \cite{GW:Anosov_DOD} and are now generally accepted as the correct generalization of convex-cocompactness to higher rank. 
\emph{Hyperconvex subgroups} (Definition \ref{d.hyp}) were singled out  in \cite{PSW:HDim_hyperconvex} among Anosov subgroups as having features in common with Kleinian groups. 
In this paper and in \cite{FPV2}, we begin investigating which phenomena of the theory of Kleinian groups persist in the class of  hyperconvex subgroups of ${\rm PSL}(d,\mb{C})$.

This work is focused on what restrictions  are imposed on a group admitting a hyperconvex representation and on the geometry of its \emph{limit set}.
If a finitely generated group $\Gamma$ admits a $k$-Anosov representation $\rho: \Gamma \to \PSL(d,\C)$, then $\Gamma$ is hyperbolic and admits a continuous, equivariant boundary map $\xi^k: \partial \Gamma \to \Gr_k(\mb C^d)$.
The image, its \emph{$k$-th limit set}, $\Lambda^k_\rho = \xi^k(\partial \Gamma)$ is the smallest closed set in $\Gr_k(\C^d)$ preserved by $\rho(\Gamma)$.

By intersecting linear subspaces of $\C^d$ corresponding to boundary maps in suitable dimensions, we construct in \S\ref{sec:bundle}, a $\CP$-bundle over $\deG$ equipped with an action by $\CP$-bundle automorphisms covering the topological action of $\G$ on $\deG$.  Using $k$-hyperconvexity, for each $z\in \deG$,  we obtain an \emph{embedding} of $\Lambda_\rho^k$ into the fiber over every point, and these embeddings vary continuously. This $\deG$ bundle over $\deG$ is called the \emph{foliated $k$-limit set} of $\rho$.  It is the central object of study and allows us to import tools from 2-dimensional quasi-conformal analysis and 3-dimensional hyperbolic geometry to the world of  Anosov representations, constituting the main novelty of this work.

In this paper, we show that a group $\Gamma$ admitting a hyperconvex representation $\rho:\Gamma\to{\rm PSL}(d,\mb{C})$ has restrictions both topologically (Theorem \ref{thmINTRO:Kleinan}) and on the geometry of the limit sets  (Theorem \ref{thmINTRO:Hausdorff}). 
In forthcoming work \cite{FPV2}, we will consider surface groups and generalizations in ${\rm PSL}(d,\mb{C})$ of {\em quasi-Fuchsian representations}. 
There we will develop more in-depth the relation with Teichmüller theory by way of replacing $\Omega$, the domain of discontinuity for a Kleianian group, with a foliated analogue, the complement of the foliated limit set in the $\CP$-bundle over $\deG$.  

Our first result establishes a direct connection with  classical Kleinian groups.
\begin{thmA}\label{thmINTRO:Kleinan}
A hyperbolic group $\Gamma$ that admits a {\rm hyperconvex} representation in ${\rm PSL}(d,\mb{C})$ is virtually isomorphic to a convex-cocompact Kleinian group.
\end{thmA}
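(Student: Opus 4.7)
The plan is to use the foliated $k$-limit set of $\rho$ to realize $\deG$ as a subset of a Riemann sphere $\CP$ on which $\Gamma$ acts by uniform quasi-symmetries, and then to apply Sullivan--Tukia rigidity to conjugate this action to one by M\"obius transformations. Concretely, fix a basepoint $z_0\in \deG$ and let $F_{z_0}\cong \CP$ denote the fiber of the foliated $k$-limit set over $z_0$. Composing $\xi^k:\deG\to \Lambda_\rho^k$ with the continuous embedding $\Lambda_\rho^k\hookrightarrow F_{z_0}$ provided by the bundle construction gives a topological embedding $\iota:\deG\hookrightarrow \CP$. Transporting the $\Gamma$-action on $\deG$ along $\iota$ produces an action of $\Gamma$ on $\iota(\deG)\subset \CP$ that covers the original boundary action.

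The next, and most critical, step is to verify that this induced action on $\iota(\deG)$ is by uniform quasi-symmetries with respect to the round metric on $\CP$. The regularity should come from two inputs coming from \S\ref{sec:bundle}: the projective (hence conformal) structure on each fiber, and the continuous variation of the embeddings $\Lambda_\rho^k \hookrightarrow F_z$ as $z$ ranges in the compact set $\deG$. Combined with the exponential contraction guaranteed by the Anosov/hyperconvexity property and a compactness argument on $\deG$, these should bound the quasi-symmetry distortion uniformly in the group element. With uniform quasi-symmetry in hand, an Ahlfors--Beurling-type extension produces a uniformly quasi-conformal $\Gamma$-action on all of $\CP$, and Tukia's theorem then delivers a quasi-conformal homeomorphism $\phi:\CP\to\CP$ conjugating the action into $\PSL(2,\C)$. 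This yields a homomorphism $\bar\rho:\Gamma\to \PSL(2,\C)$ with limit set $\phi(\iota(\deG))$.

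Finally, one promotes $\bar\rho(\Gamma)$ to a convex-cocompact Kleinian group. The action of $\Gamma$ on $\deG$ is a uniform convergence action by hyperbolicity; transported through $\phi\circ\iota$, it becomes a uniform convergence action on $\phi(\iota(\deG))\subset\CP$. By Bowditch's characterization of convex-cocompact Kleinian groups via uniform convergence actions on their limit sets, $\bar\rho(\Gamma)$ is discrete and convex-cocompact in $\PSL(2,\C)$. Faithfulness of $\bar\rho$ up to finite kernel follows from faithfulness of the $\Gamma$-action on $\deG$ together with the fact that a hyperbolic group acts with finite point stabilizers on its boundary only through a finite normal subgroup; thus $\Gamma$ is virtually isomorphic to the convex-cocompact Kleinian group $\bar\rho(\Gamma)$.

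The main obstacle is the quasi-symmetry step. The embedded copy $\iota(\deG)\subset \CP$ can be a complicated fractal, and while the foliated bundle makes all the ingredients (conformal fiber structures, continuously varying embeddings, $\Gamma$-equivariance) continuous and compact, converting this into explicit, uniform quasi-symmetric control on the induced $\Gamma$-action, and then into a uniformly quasi-conformal extension to all of $\CP$, requires carefully exploiting hyperconvexity beyond mere continuity; this is where the geometric estimates of \cite{PSW:HDim_hyperconvex} and the dynamical contraction coming from the Anosov property will need to combine.
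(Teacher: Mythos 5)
Your overall strategy --- embed $\deG$ into a fiber $\CP$ of the foliated limit set, show the transported $\G$-action is uniformly quasi-M\"obius by a compactness argument, and then invoke Sullivan-type rigidity --- matches the paper's up to the point where you must extend the action from $\iota(\deG)$ to all of $\CP$. The first half is sound: the uniform quasi-M\"obius bound is exactly Proposition \ref{l.uqc}, obtained from cocompactness of the $\G$-action on a space of normalized $6$-tuples in $\deG$. (A minor point: the conjugation result you want is Sullivan's theorem, Theorem \ref{t.Sullivan}; Tukia's versions concern higher-dimensional spheres and require extra hypotheses.)

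The genuine gap is the sentence ``an Ahlfors--Beurling-type extension produces a uniformly quasi-conformal $\G$-action on all of $\CP$.'' Ahlfors--Beurling (Theorem \ref{thm:DEextension}) extends a quasi-symmetric homeomorphism of $\RP=\partial\HH^2$ to a quasi-conformal homeomorphism of $\HH^2$; it says nothing about extending a quasi-M\"obius map defined on an arbitrary compact subset of $\CP$ (here $\iota(\deG)$ may be a Sierpinski carpet, a Cantor set, or a more complicated planar compactum), and even where individual extensions exist there is no reason the extended maps compose to give a group action, since extension operators are not homomorphisms. This step is precisely why the paper splits into cases. When $\deG=\CP$ no extension is needed (\S\ref{ss.sphere}). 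When $\deG$ is a Sierpinski carpet or circle (\S\ref{ss.Sierpinski}), the extension is built equivariantly disk by disk: one first shows the peripheral circles map to uniform quasi-circles (Proposition \ref{p.quasicircles}) and that their stabilizers act uniformly quasi-symmetrically (Corollary \ref{cor.stab}), then uses Markovic's Theorem \ref{t.Markovic} to extend each stabilizer action to a genuine quasi-conformal \emph{group} action on the disk it bounds, chooses coset representatives to fill the remaining disks consistently, verifies the homomorphism property by an explicit computation, and applies Bonk's gluing result (Proposition \ref{prop.Bonk}) to get uniform quasi-conformality of the glued maps. For all other planar boundaries no extension is attempted at all: the paper instead invokes Ha\"issinsky's characterization via the Ahlfors regular conformal dimension (Theorems \ref{t.Hei1} and \ref{t.Hei2}) to reduce to quasi-convex carpet subgroups, which are handled by the previous case. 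Your single uniform argument cannot stand without supplying one of these mechanisms; as written, the ``main obstacle'' you flag at the end is not an implementation detail but the actual content of the proof.
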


\noindent Recall that groups $G,G'$ are virtually isomorphic if there are finite index subgroups $H<G,H'<G'$ and finite normal subgroups $F\triangleleft H,F'\triangleleft H'$ such that $H/F\simeq H'/F'$. 
The question of finding topological  restrictions on groups that admit { Anosov representations} in higher rank Lie groups has been investigated, e.g., by Canary--Tsouvalas \cite{CanaryTsouvalas}, Dey \cite{Dey},  Tsouvalas \cite{Tsouvalas} for ${\rm SL}(d,\mb{R})$,  Dey--Greenberg--Riestenberg \cite{DGR} for ${\rm Sp}(2d,\mb{R})$, and Pozzetti--Tsouvalas \cite{PTsouvalas} for ${\rm Sp}(2d,\mb{C})$.

It was shown in \cite{PSW:HDim_hyperconvex}, that if $\Gamma$ is a hyperbolic group that admits a hyperconvex representation in ${\rm PSL}(d,\mb{C})$, then the Gromov boundary $\partial\Gamma$ embeds in the 2-sphere. However, in general, it is wide open whether a hyperbolic group whose boundary embeds in the 2-sphere must be virtually isomorphic to a Kleinian group: this is Cannon's conjecture \cite{Cannon} if the boundary is homeomorphic to the 2-sphere, and a conjecture of Kapovich and Kleiner \cite{KK00} if the boundary is homeomorphic to a Sierpinski carpet.
Recall that a Sierpinski carpet is the complement in $\CP$ of the interiors of a family of pairwise disjoint closed disks that are dense in $\CP$; Whyburn proved that any two such topological spaces are homeomorphic \cite{Whyburn}.

Our approach to prove that $\Gamma$ is Kleinian relies on Sullivan's result \cite{Sul81} that a uniformly quasi-conformal subgroup of $\Homeo(\CP)$ is quasi-conformally conjugated to a subgroup of $\PSL_2(\C)$. 
In the case that $\partial \Gamma$ is a $2$-sphere, we use hyperconvexity to build an invariant quasi-conformal structure on $\partial \Gamma$ to which Sullivan's result applies (see \S\ref{ss.sphere}).
Thanks to a result of Ha\"issinksy \cite{Ha15}, the general case reduces to proving the result for groups with Sierpinski carpet boundary.
In \S\ref{ss.Sierpinski} we deal with this case and use hyperconvexity to find quasi-symmetric actions of the stabilizer of each peripheral circle of the Sierpinski carpet that all piece together nicely.
A result of Markovic \cite{Markovic} allows us to extend these quasi-symmetric actions on peripheral circles to quasi-disks that they bound in $\CP$ so that we may then apply Sullivan's result, yet again. It would be interesting to understand if our techniques can be generalized to obtain topological restrictions on   (1,1,2)-hypertransverse groups, introduced by Canary-Zimmer-Zhang \cite{CZZ}.

\medskip
Seminal work of Sullivan \cite{Sul84} and Tukia \cite{Tuk84} shows that being convex-cocompact tames the complexity of the limit set: the Hausdorff dimension of the limit set of a geometrically finite group of M\"obius transformations is strictly less than the dimension of the ambient sphere unless the group is a lattice.  
We show that in  higher rank  {\em hyperconvexity} also prevents the limit set from degenerating too much.

\begin{thmA}\label{thmINTRO:Hausdorff}
Let $\rho:\Gamma\to{\rm PSL}(d,\mb{C})$ be  $k$-hyperconvex. 
Then the $2$-dimensional Hausdorff measure of the $(d-k)$-th limit set $\Lambda_\rho^{d-k}\subset \Gr_{d-k}(\mathbb C^d)$ is $0$ unless $\Gamma$ is virtually isomorphic to a uniform lattice in ${\rm PSL}(2,\mb{C})$.
If $\deG$ is homeomorphic to a {\rm Sierpinski carpet} or a circle, then furthermore  
\[
{\rm Hff}(\Lambda_\rho^{d-k})<2.
\]
\end{thmA}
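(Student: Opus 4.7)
The plan is to reduce to the classical Hausdorff dimension estimates for convex-cocompact Kleinian groups of Sullivan \cite{Sul84} and Tukia \cite{Tuk84} via Theorem~A, and then to transfer these bounds from the Kleinian limit set in $\CP$ to $\Lambda_\rho^{d-k}\subset\Gr_{d-k}(\C^d)$ using the foliated $k$-limit set.

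By Theorem~A, after passing to a finite-index subgroup and quotienting by a finite normal subgroup, we may identify $\Gamma$ with a convex-cocompact Kleinian group $\Gamma_0<\PSL(2,\C)$ with limit set $\Lambda_{\Gamma_0}\subset\CP$ equivariantly homeomorphic to $\deG$. Sullivan's theorem gives $H^2(\Lambda_{\Gamma_0})=0$ unless $\Lambda_{\Gamma_0}=\CP$, i.e.\ unless $\Gamma_0$ is a uniform lattice; when $\deG$ is a Sierpinski carpet or a circle, $\Lambda_{\Gamma_0}$ is an Ahlfors regular proper subset of $\CP$, and the sharper estimate $\dim_H(\Lambda_{\Gamma_0})<2$ holds. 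Fixing $z\in\deG$ and using the embedding $\iota_z:\deG\hookrightarrow\CP$ from the foliated $k$-limit set of \S\ref{sec:bundle}, the proof of Theorem~A produces a global quasi-conformal self-map $\Phi$ of $\CP$ carrying $\iota_z(\deG)$ to $\Lambda_{\Gamma_0}$ and conjugating the $\Gamma$-actions. Since quasi-conformal self-maps of $\CP$ preserve the class of $H^2$-null sets and preserve the bound $\dim_H<2$ on Ahlfors regular sets, the corresponding estimates transfer to $\iota_z(\deG)\subset\CP$.

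It remains to compare $\iota_z(\deG)\subset\CP$ with $\Lambda_\rho^{d-k}\subset\Gr_{d-k}(\C^d)$. Both are continuous images of $\deG$ under maps built by intersecting boundary flags in accordance with the bundle construction of \S\ref{sec:bundle}, and the comparison $\xi^{d-k}\circ\iota_z^{-1}$ is expressed in smooth coordinates away from $z$. Using the uniform transversality afforded by $k$-hyperconvexity, I expect this comparison to be locally bi-Lipschitz on compact subsets of $\deG\setminus\{z\}$. Exhausting $\deG\setminus\{z\}$ by such compacta, and noting that the single point $\xi^{d-k}(z)$ contributes nothing to $H^2$ or to $\dim_H$, the Hausdorff measure and dimension bounds transfer to $\Lambda_\rho^{d-k}$.

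The main obstacle is precisely this regularity comparison: general quasi-symmetric or H\"older equivalences can change Hausdorff dimension, so one must genuinely leverage the bundle construction and the transversality implied by $k$-hyperconvexity to secure the locally bi-Lipschitz control away from the base fiber.
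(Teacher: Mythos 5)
Your argument for the second assertion (${\rm Hff}<2$ when $\deG$ is a Sierpinski carpet or a circle) is essentially the paper's: in that case the proof of Theorem \ref{t.kleinian} really does produce a uniformly quasi-conformal action on all of $\CP$ extending the action on $\xi_z(\deG)$, Sullivan's Theorem \ref{t.Sullivan} then gives a global quasi-conformal conjugacy to a convex-cocompact Kleinian group, and Sullivan--Tukia plus the Gehring--V\"ais\"al\"a distortion estimate yield ${\rm Hff}(\Lambda_z^k)<2$ in each fiber (Proposition \ref{p.HffLxzero}). The ``main obstacle'' you flag -- transferring this from the fiber to $\Gr_{d-k}(\C^d)$ -- is resolved exactly as you guess: after reducing to $k=1$ by composing with $\wedge^k$ (Proposition \ref{prop:j to 1i}), hyperconvexity gives uniform transversality of $x^{d-1}\cap y^{d-1}$ to $z^2$ for $x,y$ in a compact set away from $z$, and the tangent projection $y^{d-1}\mapsto[y^{d-1}\cap z^2]$ is bi-Lipschitz there (Proposition \ref{p.projLip}); a finite cover of $\deG$ by such charts finishes the argument.

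However, there is a genuine gap in your proof of the first assertion. You derive the vanishing of the $2$-dimensional Hausdorff measure from ``the proof of Theorem~A produces a global quasi-conformal self-map $\Phi$ of $\CP$ carrying $\iota_z(\deG)$ to $\Lambda_{\Gamma_0}$.'' This is only true in the cases $\deG=\CP$ and $\deG$ a Sierpinski carpet or circle. For a general planar boundary, Theorem \ref{t.kleinian} is proved via Ha\"issinsky's characterization in terms of the Ahlfors-regular conformal dimension (Theorems \ref{t.Hei1} and \ref{t.Hei2}): one checks that every quasi-convex carpet \emph{subgroup} is virtually Kleinian and invokes an abstract criterion. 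This yields a virtual isomorphism of $\Gamma$ with a Kleinian group but no quasi-conformal (indeed, no quantitatively controlled) map of $\CP$ carrying the embedded copy $\xi_z(\deG)$ to the Kleinian limit set, so Sullivan's measure-zero theorem cannot be transported. (Even a quasi-symmetric or H\"older conjugacy would not suffice, as you yourself note.) The paper avoids this entirely: Theorem \ref{thm:Leb0} proves directly that $\xi_z^k(\deG)$ has zero Lebesgue measure in every fiber whenever $\deG$ is not a sphere, by taking the fiberwise harmonic extension of the characteristic function of the foliated limit set, using cocompactness of $\Gamma$ on distinct triples to show the extension is bounded away from $1$ on a fundamental domain, and contradicting the Lebesgue density theorem at a putative density point. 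This density/harmonic-measure argument is the missing ingredient; without it, your proof only covers the carpet and circle cases, where the first assertion is anyway subsumed by the second.
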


We actually prove a stronger result about the Hausdorff dimension of the  limit set $\Lambda$ of $\rho$ in a finer partial flag manifold if $\partial\Gamma$ is a Sierpinski carpet or circle and $\rho$ satisfies additional hyperconvexity hypotheses; see Theorem \ref{p.Hffub}.  

The boundaries of fundamental groups of compact hyperbolic 3-manifolds with totally geodesic boundary are Sierpinski carpets. The assumption of Theorem \ref{thmINTRO:Hausdorff} can be rephrased as requiring that $\G$ is the fundamental group of an irreducible atoroidal acylindrical 3-manifold with non-empty incompressible boundary. The restriction in the second part of Theorem \ref{thmINTRO:Hausdorff} to groups whose boundary is a Sierpinski carpets guarantees that, in each fiber of our $\CP$-bundle over $\deG$, the limit set is quasi-conformally conjugated to a(ny) Kleinian realisation. 

Pozzetti-Sambarino-Wienhard already proved that  ${\rm Hff}(\Lambda^{d-k}_\rho)\le 2$ \cite{PSW:HDim_hyperconvex}. However the fact that 2-dimensional Hausdorff measure of $\Lambda^{d-k}_\rho$ vanishes is important, among other things, in view of results of Dumas-Sanders on the complex geometry of the domain of discontinuity (\cite{DumasSanders} and \cite{DumasSanders2}). 
To give a concrete example, let $\rho:\G\to{\rm PSL}(4,\mb{C})$ be a 2-Anosov representation. A co-compact domain of discontinuity for $\rho$ is given by
\[
\Omega_\rho:=\mb{CP}^3\setminus\bigcup_{t\in\partial\Gamma}{\mb{P}(\xi^2(t))},
\]
this is one of the domains studied by Guichard-Wienhard \cite{GW:Anosov_DOD}. Denoting by $M_\rho:=\Omega_\rho/\rho(\Gamma)$ the compact quotient, in order to obtain the following corollary from \cite[Theorems D and E]{DumasSanders}, one needs to show that the 2-dimensional Hausdorff measure of the limit set $\Lambda^2_\rho\subset \Gr_2(\C^4)$ vanishes, which is the content of Theorem \ref{thmINTRO:Hausdorff} in the presence of hyperconvexity:

\begin{cor}
Let $\rho:\G\to{\rm PSL}(4,\mb{C})$ be 2-hyperconvex. The manifold $M_\rho$ does not admit a Kähler metric and does not admit any non-constant holomorphic map to a hyperbolic Riemann surface.   
\end{cor}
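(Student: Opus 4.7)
The plan is to derive the corollary as a direct application of Theorem \ref{thmINTRO:Hausdorff}, specialized to $d=4$ and $k=2$, together with the complex geometric machinery of Dumas and Sanders \cite{DumasSanders,DumasSanders2}. As highlighted in the paragraph preceding the corollary, the exact input needed for \cite[Theorems D and E]{DumasSanders} to apply to the quotient $M_\rho = \Omega_\rho/\rho(\Gamma)$ is the vanishing of the $2$-dimensional Hausdorff measure of the second limit set $\Lambda^2_\rho \subset \Gr_2(\mb{C}^4)$, so the task reduces to supplying exactly this.

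First I would apply Theorem \ref{thmINTRO:Hausdorff} with $d=4$ and $k=2$, noting that $d-k = 2$. Under the $2$-hyperconvexity hypothesis on $\rho$, the theorem yields that the $2$-dimensional Hausdorff measure of $\Lambda^2_\rho$ vanishes, except possibly in the exceptional case where $\Gamma$ is virtually isomorphic to a uniform lattice $\Gamma_0 < \PSL(2,\mb{C})$. Outside of that case, \cite[Theorem D]{DumasSanders} rules out Kähler metrics on $M_\rho$ and \cite[Theorem E]{DumasSanders} rules out non-constant holomorphic maps from $M_\rho$ to hyperbolic Riemann surfaces, which is precisely what the corollary asserts.

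The remaining step, which I expect to be the main obstacle, is to rule out or handle directly the exceptional case where $\Gamma$ is virtually a uniform lattice in $\PSL(2,\mb{C})$. In this case $\partial\Gamma \cong S^2$ and the $2$-hyperconvex boundary map $\xi^2 : \partial\Gamma \to \Gr_2(\mb{C}^4)$ is an equivariant hyperconvex embedding of the sphere, which by Zariski closure arguments and rigidity of hyperconvex boundary maps from $S^2$ should force $\rho$, on a finite index subgroup and up to conjugation, to be the composition of a Kleinian lattice embedding with an explicit irreducible representation $\PSL(2,\mb{C}) \to \PSL(4,\mb{C})$ such as the symmetric cube. For such a model representation the geometry of $\Omega_\rho$ and the compact quotient $M_\rho$ fits into the explicit classification of \cite{DumasSanders2}, from which the non-Kählerness and the non-existence of non-constant holomorphic maps to hyperbolic Riemann surfaces can be checked by hand, completing the proof.
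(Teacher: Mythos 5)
Your first two paragraphs reproduce the paper's own derivation exactly: the authors obtain the corollary by feeding the conclusion of Theorem \ref{thmINTRO:Hausdorff} (with $d=4$, $k=2$, so that the relevant limit set is $\Lambda^2_\rho\subset\Gr_2(\C^4)$) into \cite[Theorems D and E]{DumasSanders}, precisely as you describe.

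The interesting part of your proposal is the third paragraph. You are right that the exceptional clause of Theorem \ref{thmINTRO:Hausdorff} is a genuine issue which the paper's one-line derivation does not address, and the exceptional case is not vacuous: composing a uniform lattice $\G<\PSL(2,\C)$ with the irreducible representation $\mathrm{Sym}^3:\PSL(2,\C)\to\PSL(4,\C)$ yields a $2$-hyperconvex representation whose limit set $\Lambda^2_\rho$ is a holomorphically embedded copy of $\CP$ in $\Gr_2(\C^4)$, hence has \emph{positive} $2$-dimensional Hausdorff measure, so the input to Dumas--Sanders genuinely fails there. Your proposed repair, however, is not a proof. The rigidity assertion that an equivariant hyperconvex sphere forces $\rho$ to be virtually $\mathrm{Sym}^3$ of a Kleinian embedding is neither proved nor reduced to a citation, and it is delicate: $2$-hyperconvexity is an open condition, so any nontrivial deformation of $\mathrm{Sym}^3\circ\iota$ in the $\PSL(4,\C)$-character variety would remain $2$-hyperconvex without being of that form. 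The concluding ``check by hand'' against \cite{DumasSanders2} is likewise not carried out, and it is not even argued that the conclusion of the corollary holds for the model representation. The net effect is that your argument, like the paper's, establishes the corollary only under the implicit additional assumption that $\G$ is not virtually a uniform lattice in $\PSL(2,\C)$; you have correctly located the remaining issue but have not closed it.
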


\subsection*{Acknowledgements}
This work was funded through the DFG Emmy Noether project 427903332 of Beatrice Pozzetti. Beatrice Pozzetti acknowledges additional support by the DFG under Germany’s Excellence Strategy EXC-2181/1-390900948.
James Farre acknowledges support from DFG – Project-ID 281071066 – TRR 191. 
\setcounter{tocdepth}{1}
\tableofcontents

\section{The tangent projection}\label{s.projmap}

\subsection{Anosov representations}\label{s.Anosov}
Anosov representations of hyperbolic groups were introduced by Labourie \cite{L06} and Guichard-Wienhard \cite{GW:Anosov_DOD} using dynamical properties of the lift of the geodesic flow on the flat bundle associated to the representation. 
We use instead the  geometric group theoretic definition due to Kapovich-Leeb-Porti \cite{KLP} and Bochi-Potrie-Sambarino \cite{BPS}, based on the following quantities associated to an element $A\in{\rm PSL}(d,\mb{C})$.

\begin{dfn}[Singular values gaps]
Choose a Hermitian inner product on $\mb C^d$.
For an element $A\in{\rm PSL}(d,\mb{C})$ we denote by $\sigma_1(A)\ge\cdots\ge\sigma_d(A)$ the \emph{singular values} of any lift of $A$ to ${\rm SL}(d,\mb{C})$ ordered in decreasing size, namely the eigenvalues of the Hermitian matrix $A^*A$. For $1\le k <d$, the  $k$-th {\em singular value gap} is 
\[\frac{\log \sigma_k(A)-\log\sigma_{k+1}(A)}{2}.\]
\end{dfn}

The singular value gaps can be interpreted as a refinement of the distance between the basepoint in the symmetric space of ${\rm PSL}(d,\mb{C})$ and its image under $A$. Anosov representations are then representations for which the orbit map is strongly undistorted. For a finitely generated group $\Gamma$ we choose a symmetric finite generating set $S$ and denote by $|\cdot|$ the associated word-length. 
\begin{dfn}[Anosov representation]\label{def: Anosov}
For $1\leq k < d$, a representation $\rho:\G\to\PSL(d,\C)$ is \emph{$k$-Anosov} if the $k$-th singular value gap of $\rho(\gamma)$ is bounded below by an affine function of $|\gamma|$, i.e., there exist positive constants $c_1,c_2$ such that 
\[
\log\sigma_{k}(\rho(\g))-\log\sigma_{k+1}(\rho(\g))\geq c_1|\gamma|-c_2
\]
for all $\gamma \in \Gamma$.
\end{dfn}
It is immediate from the definition that a $k$-Anosov representation is also $(d-k)$-Anosov. It was proven in \cite{KLP,BPS} that a group admitting a $k$-Anosov representation is hyperbolic, and any $k$-Anosov representation $\rho$ admits continuous, equivariant, transverse boundary maps $\xi^k:\partial\Gamma\to\Gr_k(\C^d)$, $\xi^{d-k}:\partial\Gamma\to\Gr_{d-k}(\C^d)$. The latter means that for every $x\neq y$, the subspaces $\xi^k(x)$ and $\xi^{d-k}(y)$ are in direct sum. Following the notation introduced in \cite{PSW:HDim_hyperconvex} we will sometimes just write for a point $x\in\partial\Gamma$
\[
x^k:=\xi^k(x)\in\Gr_k(\C^d).
\]
If $\rho$ is $k_1$-Anosov and $k_2$-Anosov with $k_1<k_2$ then for every $x\in\deG$ it holds $x^{k_1}<x^{k_2}$.

The \emph{limit set} of the $\rho$ action on $\Gr_k(\C^d)$ is the smallest closed $\rho(\G)$-invariant set, and  boundary map $\xi^k$ parametrizes this set.  
\begin{dfn}[Limit set]
 We denote by \[\Lambda^k_\rho:=\xi^k(\deG)\subset\Gr_k(\C^d)\] the \emph{$k$-th limit set} of the $k$-Anosov representation $\rho:\G\to\PSL(d,\C)$.
\end{dfn}

\subsection{Hyperconvexity and tangent projections}
\label{sec:hyperconvex}

Among Anosov representations into ${\rm PSL}(d,\mb{C})$ we will focus on the more restricted class given by hyperconvex representations.  In the following, any representation into $\PSL(d,\C)$ is considered to be both $0$ and $d$-Anosov with the constant boundary maps $\xi^0(x)=\{0\}$ and $\xi^d(x)=\C^d$ for all $x\in\deG$.

\begin{dfn}[$k$-hyperconvex]
\label{d.hyp}

A representation $\rho: \Gamma \to{\rm PSL}(d,\mb{C})$ is {\em $k$-hyperconvex} for $1\le k\le d-1$ if it is $\{k-1,k+1,d-k\}$-Anosov, and, for all distinct triples $x, y, z\in \partial \Gamma$, we have 
\begin{equation}\label{e.hyp}\left((x^{d-k}\cap z^{k+1})+z^{k-1}\right)\cap\left((y^{d-k}\cap z^{k+1})+z^{k-1}\right) = z^{k-1}.
\end{equation}
\end{dfn}

\begin{remark}
This is the dual notion to property $H_k$ as defined in \cite[Definition 6.1]{PSW:HDim_hyperconvex}, which would require that the sum $(x^k\cap z^{d-k+1})+(y^k\cap z^{d-k+1})+z^{d-k-1}$ is direct. 
More specifically, a representation $\rho$ is  $k$-hyperconvex according to Definition \ref{d.hyp} if and only if its contragradient representation $\rho^*$ satisfies property $H_k$ according to \cite{PSW:HDim_hyperconvex}. 
\end{remark}

Hyperconvexity is an open condition on representations \cite[Proposition 6.2]{PSW:HDim_hyperconvex}.

Let $\rho:\Gamma\to{\rm PSL}(d,\mb{C})$ be a $k$-hyperconvex representation. For each $z\in \partial \Gamma$, we project the limit set $\Lambda^{d-k}_\rho\subset\Gr_{d-k}(\C^d)$ to the projective line $\bP(z^{k+1}/z^{k-1})$. More specifically we study the map $\xi_z^k:\partial \Gamma\to\bP(z^{k+1}/z^{k-1})$

\[
\xi_z^k(x):=\left\{
\begin{array}{l l}
{[x^{d-k}\cap z^{k+1}]} &\text{if $x\neq z$,}\\
{[z^k]} &\text{if $x=z$.}\\
\end{array}
\right.
\]

The Anosov property ensures that these maps are well defined. The hyperconvexity property guarantees that these maps are furthermore injective and continuous. This is the content of the next proposition, which was proven, for representations satisfying property $H_k$ in \cite[Proposition 4.9]{BeyP1} building on ideas from \cite[Proposition 6.7]{PSW:HDim_hyperconvex}.

\begin{prop}
\label{prop:map xi}
Let $\rho:\G\to\PSL(d,\C)$ be $k$-hyperconvex. For every $z\in\partial\Gamma$ the map $\xi_z^k(\cdot)$ is well defined, continuous and injective.
\end{prop}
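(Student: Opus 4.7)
The three claims---well-definedness, injectivity, continuity---correspond respectively to Anosov transversality, the hyperconvexity equation, and a careful limiting argument. Continuity at $x = z$ is the only non-routine step.

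\emph{Well-definedness.} For $x \neq z$, the $k$-Anosov transversality $x^{d-k} \oplus z^k = \C^d$ implies that the quotient map $z^{k+1} \to z^{k+1}/z^k$ is injective on $x^{d-k} \cap z^{k+1}$, so combined with the dimension estimate $\dim(x^{d-k} \cap z^{k+1}) \geq 1$ this intersection is exactly one-dimensional. The $(k-1)$-Anosov transversality, in its equivalent form $x^{d-k+1} \oplus z^{k-1} = \C^d$, combined with the nesting $x^{d-k} \subset x^{d-k+1}$, gives $x^{d-k} \cap z^{k-1} = 0$. Thus $x^{d-k} \cap z^{k+1}$ descends to a non-zero line in $z^{k+1}/z^{k-1}$, so $\xi_z^k(x)$ is a well-defined point of $\bP(z^{k+1}/z^{k-1})$.

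\emph{Injectivity.} Write $L_x := (x^{d-k} \cap z^{k+1}) + z^{k-1}$ for $x \neq z$. The hyperconvexity equation \eqref{e.hyp} says precisely $L_x \cap L_y = z^{k-1}$ for distinct $x, y \neq z$; since both $L_x, L_y$ are $k$-dimensional, this forces $L_x \neq L_y$. For the boundary case $\xi_z^k(x) = [z^k]$ with $x \neq z$, the non-zero line $x^{d-k} \cap z^{k+1}$ would lie in $z^k$, contradicting $x^{d-k} \cap z^k = 0$.

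\emph{Continuity.} Away from $z$, continuity follows by composing the Anosov map $\xi^{d-k}$, the transverse intersection with the fixed subspace $z^{k+1}$ in the constant-dimension stratum, and the linear projection $z^{k+1} \to z^{k+1}/z^{k-1}$ restricted to lines transverse to $z^{k-1}$. At $x = z$ the situation is subtle: the limit $x_n^{d-k} \cap z^{k+1} \to z^{d-k} \cap z^{k+1}$ can jump in dimension, so the limit of $L_n$ is not determined by $\xi^{d-k}(z)$ alone. The plan is: for $x_n \to z$, pass to a subsequence so $L_n \to L$ in the compact space $\bP(z^{k+1}/z^{k-1}) \cong \CP$ of $k$-planes in $z^{k+1}$ containing $z^{k-1}$, and show $L = z^k$. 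Two ingredients identify $L$. First, hyperconvexity gives $L_n \cap L_y = z^{k-1}$ for every $y \neq z$ with $y \neq x_n$; by continuity of intersection in the constant-dimension stratum, $L \cap L_y = z^{k-1}$, so $L \neq L_y$ for all such $y$. Second, using the modular-law rewriting $L_n = (x_n^{d-k} + z^{k-1}) \cap z^{k+1}$, I extract a further subsequence along which the hyperplanes $x_n^{d-k} + z^{k-1}$ converge to some $H \supset z^{d-k} + z^{k-1}$, and identify $L$ by matching the first-order approach of $x_n^{d-k}$ to $z^{d-k}$ against the convergence $\xi^k(x_n) = x_n^k \to z^k$; hyperconvexity forces these data to align so that $L = z^k$.

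\emph{Main obstacle.} The difficulty is this last step: the approach $x_n^{d-k} \to z^{d-k}$ admits many a priori possible ``directions of descent'', and the challenge is to show hyperconvexity makes all these directions produce the same limit $z^k \in \bP(z^{k+1}/z^{k-1})$. This is precisely the content of the dual formulation in \cite[Proposition 4.9]{BeyP1} and \cite[Proposition 6.7]{PSW:HDim_hyperconvex}, whose argument I adapt.
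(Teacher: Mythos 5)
Your treatment of well-definedness and injectivity matches the paper's and is correct. The genuine gap is in the continuity argument at $x=z$, which you rightly identify as the only hard step but do not actually carry out. Your ``first ingredient'' rests on a false claim: from $L_n\cap L_y=z^{k-1}$ you cannot conclude $L\cap L_y=z^{k-1}$ for a limit $L$ of the $L_n$, because the dimension of an intersection of subspaces is only \emph{upper} semicontinuous --- the pair $(L,L_y)$ may leave the ``constant-dimension stratum'' in the limit, and deciding whether it does is exactly equivalent to deciding whether $L=L_y$, which is what you are trying to prove. (Moreover, even granting $L\neq L_y$ for all $y\neq z$, this only excludes $L$ from the set $\{L_y\}_{y\neq z}$ and does not identify $L$ with $z^k$.) Your ``second ingredient'' (``matching the first-order approach \dots\ hyperconvexity forces these data to align'') is not an argument but a restatement of the desired conclusion, and you ultimately defer to the references without extracting what you need from them.

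What the paper does instead is to isolate the limiting statement as Proposition \ref{p.BeyP1}: a continuous, equivariant map $F:\detG\to\Gr_k(\C^d)$ satisfying $F(x,z)\pitchfork y^{d-k}$ for all pairwise distinct triples automatically satisfies $\lim_{x\to z}F(x,z)=z^k$. With $F(x,z)=(x^{d-k}\cap z^{k+1})+z^{k-1}$, the only thing to verify is that transversality hypothesis, and this follows from $F(x,z)\subset z^{k+1}$ together with the hyperconvexity identity \eqref{e.hyp} and transversality of $y^{d-k}$ with $z^k$: one gets
\[
F(x,z)\cap y^{d-k}\ \subset\ F(x,z)\cap\bigl((y^{d-k}\cap z^{k+1})+z^{k-1}\bigr)=z^{k-1},
\]
hence $F(x,z)\cap y^{d-k}\subset z^{k-1}\cap y^{d-k}=\{0\}$. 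Note that the relevant transversality is against the subspaces $y^{d-k}$ of $\C^d$, not against the planes $L_y$ inside $z^{k+1}$; it is this global condition that lets the contraction dynamics of $\rho(\G)$ on the Grassmannians (the content of Proposition \ref{p.BeyP1}) pin down the limit. To close the gap you should either prove that proposition or cite it precisely with its hypotheses verified, as the paper does; as written, the step you flag as the ``main obstacle'' remains unresolved and the partial sketch offered for it does not hold up.
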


For the proof we will need the following proposition, in which we denote by $\detG$ the set of pairs of distinct points in $\deG$, and for $x^k\in\Gr_k(\C^d)$, $y^{d-k}\in\Gr_{d-k}(\C^d)$ we write $x^k\pitchfork y^{d-k}$ to mean $x^k\cap y^{d-k}=\{0\}$:
\begin{prop}[{\cite[Proposition 2.13]{BeyP1}}]\label{p.BeyP1}
    Let $\rho:\G\to\PGL(d,\C)$ be $k$-Anosov.  Suppose $F:\detG\to\Gr_k(\C^d)$ is a continuous,  $\rho$-equivariant, and  satisfies that for every pairwise distinct triple $(x,y,z)$, $F(x,z)\pitchfork y^{d-k}$. Then $$\lim_{x\to z} F(x,z)=z^k.$$
\end{prop}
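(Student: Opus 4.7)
My plan is a compactness-and-dynamics argument: use $\rho$-equivariance of $F$ to replace the degenerating pair $(x_n,z)$ by a pair staying in a compact subset of $\detG$, so that continuity of $F$ pins down a well-defined limit there, and then push this limit back via Anosov North--South dynamics to recover $z^k$.

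Suppose $x_n\to z$ with $x_n\neq z$; I want $F(x_n,z)\to z^k$. Fix an auxiliary point $y_0\in\deG\setminus\{z\}$. Using that $\G$ acts properly and cocompactly on the set of pairwise distinct triples in $\deG$ (Bowditch's dynamical characterization of hyperbolic groups), and noting that the triples $(x_n,y_0,z)$ leave every compact set since the pair $(x_n,z)$ approaches the diagonal, I extract $\gamma_n\in\G$ so that, along a subsequence, $\gamma_n^{-1}(x_n,y_0,z)\to (a,b,c)$ with $a,b,c$ pairwise distinct. In particular $|\gamma_n|\to\infty$, so along a further subsequence $\gamma_n\to p$ and $\gamma_n^{-1}\to q$ in $\deG$. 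The convergence group dynamics of $\G$ give $\gamma_n^{-1} y\to q$ for every $y\neq p$: applying this to $y_0$ and to $z$ separately, and using that $a,b,c$ are distinct, one checks that $p$ can be neither different from both $y_0$ and $z$ (else $b=c=q$) nor equal to $y_0$ (else $a=c=q$ since $x_n\to z\neq y_0=p$), forcing $p=z$. Consequently $b=q$, while $a$ and $c$ are free, and the triple $(a,q,c)$ is pairwise distinct.

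Now by $\rho$-equivariance of $F$,
\[
F(x_n,z)=\rho(\gamma_n)\,F(\gamma_n^{-1}x_n,\gamma_n^{-1}z).
\]
By continuity of $F$ on $\detG$ and $(\gamma_n^{-1}x_n,\gamma_n^{-1}z)\to (a,c)\in\detG$, we get $F(\gamma_n^{-1}x_n,\gamma_n^{-1}z)\to F(a,c)$. Since $q\notin\{a,c\}$, the standing hypothesis applied to the triple $(a,q,c)$ yields $F(a,c)\pitchfork q^{d-k}$. The Anosov property of $\rho$ then does the rest: as $\gamma_n\to z$ and $\gamma_n^{-1}\to q$, the attracting $k$-plane of $\rho(\gamma_n)$ in $\Gr_k(\C^d)$ converges to $z^k$ and the repelling $(d-k)$-plane to $q^{d-k}$, and one has the uniform North--South statement that any sequence $V_n\to V_\infty\in\Gr_k(\C^d)$ with $V_\infty\pitchfork q^{d-k}$ satisfies $\rho(\gamma_n)V_n\to z^k$ (transversality is an open condition, so $V_n$ is transverse to the repelling plane of $\rho(\gamma_n)$ for large $n$ with quantitative control). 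Taking $V_n=F(\gamma_n^{-1}x_n,\gamma_n^{-1}z)$ yields $F(x_n,z)\to z^k$.

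The main obstacle is the convergence-group analysis identifying $p=z$: one must argue that the only admissible accumulation behavior of the $\gamma_n$ produced by triple-cocompactness is for their attracting limit on $\deG$ to agree with the very point $z$ being approached by $x_n$, so that the auxiliary point $y_0$ is carried to the repelling point $q$ and thereby supplies, via the hypothesis, precisely the transversality between $F(a,c)$ and $q^{d-k}$ needed to activate the Anosov contraction. The rest is a routine blend of continuity, equivariance, and the uniform form of North--South dynamics for Anosov representations.
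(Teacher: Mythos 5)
The paper does not actually prove this proposition: it is quoted from Beyrer--Pozzetti \cite[Proposition 2.13]{BeyP1} and used as a black box, so there is no in-paper proof to compare against. Judged on its own, your argument is correct and is the standard route to such statements: renormalize the degenerating pair $(x_n,z)$ by cocompactness of $\G$ on distinct triples, identify the attracting/repelling points of the renormalizing sequence via convergence-group dynamics, and then apply equivariance together with the contraction property of Anosov representations. The case analysis forcing $p=z$ is right (note you implicitly use \emph{locally uniform} convergence $\gamma_n^{-1}\to q$ on $\deG\setminus\{p\}$ to handle the moving points $x_n$, which is exactly the form of the convergence property one should cite), and the triple $(a,q,c)$ is indeed pairwise distinct, so the transversality hypothesis applies to $F(a,c)$. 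Two points you lean on should be flagged as the standard external inputs they are: (i) for a $k$-Anosov $\rho$, if $\gamma_n\to p$ and $\gamma_n^{-1}\to q$ in $\G\cup\deG$, then the Cartan attracting $k$-plane of $\rho(\gamma_n)$ converges to $\xi^k(p)$, the repelling $(d-k)$-plane of $\rho(\gamma_n)$ converges to $\xi^{d-k}(q)$, and $\rho(\gamma_n)V_n\to\xi^k(p)$ uniformly for $V_n$ in a compact set of $k$-planes transverse to $\xi^{d-k}(q)$; and (ii) since all subsequential arguments produce the same limit $z^k$ and $\Gr_k(\C^d)$ is compact, convergence of the full sequence follows --- worth one explicit sentence, but routine. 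With those citations in place the proof is complete.
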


\begin{proof}[Proof of Proposition \ref{prop:map xi}]
We consider the map
$$\begin{array}{cccc}
F:&\detG&\to&\Gr_k(\C^d)\\
&(x,z)&\mapsto& (x^{d-k}\cap z^{k+1})+z^{k-1}\end{array}$$
Since the representation is $k$-Anosov, for every distinct pair $x,z\in\deG$, the subspace $x^{d-k}$ is transverse to $z^k$, as a result the map is well defined. Since the representation is $k$-hyperconvex it furthermore follows that if $(x,y,z)\in\deG^3$ is pairwise distinct, then $F(x,z)\pitchfork y^{d-k}$: since by definition $F(x,z)<z^{k+1}$ it holds 
$$F(x,z)\cap y^{d-k}< F(x,z)\cap ((y^{d-k}\cap z^{k+1})+z^{k-1}).$$
Since $\rho$ is $k$-hyperconvex, the latter subspace is equal to $z^{k-1}$, and so the former is contained in $z^{k-1}$.  Since $y^{d-k}$ is transverse to $z^k$, $F(x,z)\cap y^{d-k}$ is trivial.

Since $F(x,z)$ is clearly $\rho$-equivariant and continuous, it follows from Proposition \ref{p.BeyP1} that we can use $F(x,z)$ to define the continuous map 
$F_z:\deG\to\Gr_k(\C^d)$ 
\[
F_z(x):=\left\{
\begin{array}{l l}
{(x^{d-k}\cap z^{k+1})+z^{k-1}} &\text{if $x\neq z$,}\\
{z^k} &\text{if $x=z$.}\\
\end{array}
\right.
\]
Since for every $x$, 
$z^{k-1}<F_z(x)<z^{k+1}$, we obtain, by projection, that $\xi^k_z$ is well defined and continuous.

The map $\xi^k_z$ is injective: for every $x\neq z$, $\xi^k_z(x)\neq \xi^k_z(z)$ since the representation is $k$-Anosov and thus in particular $x^{d-k}\pitchfork z^k$;  for every $x,y$ distinct and distinct from $z$, that $\xi^k_z(x)\neq \xi^k_z(y)$ is a restatement of Equation \eqref{e.hyp}.
\end{proof}

If $\rho:\G\to\PSL(d,\C)$ is $k$-hyperconvex, we set, for every point $z\in\deG$ 
$$\Lambda_z=\Lambda^k_z:=\xi_z^k(\deG)\subset \P(z^{k+1}/z^{k-1}).$$

\subsection{Induced conformal actions}
\label{sec:bundle}
Consider the partial flag manifolds
\begin{align*}
&\calF_{k-1,k,k+1}=\{(U,V,W)\in{\rm Gr}_{k-1}(\C)\times{\rm Gr}_{k}(\C)\times{\rm Gr}_{k+1}(\C)\left|\;U< V< W\right.\}\\
&\calF_{k-1,k+1}=\{(U,W)\in{\rm Gr}_{k-1}(\C)\times{\rm Gr}_{k+1}(\C)\left|\;U< W\right.\},
\end{align*}
so that the  continuous surjection
\[
\calF_{k-1,k,k+1}\to \calF_{k-1,k+1}
\]  
is a fiber bundle with fibers isomorphic to $\mb{CP}^1$.
It can be naturally identified with the canonical fiber bundle $\mathcal B\to\calF_{k-1,k+1}$ with fiber $\mb{P}(W/U)$ over the pair $(U,W)\in\calF_{k-1,k+1}$.

Use the map $t\in \deG \mapsto (t^{k-1}, t^{k+1}) \in \mathcal {F}_{k-1,k+1}$ to obtain a $\CP$-bundle $\mathcal B_\rho^k \to \deG$ by pullback.
For each $\gamma \in \Gamma$ and $t\in \deG$, the linear map $\rho(\gamma) \in \mathrm{PSL}(d,\C)$ induces a projective equivalence 
\begin{equation}\label{eqn: mobius cocycle}
    \rho^k(\g,t):\mb{P}(t^{k+1}/t^{k-1})\to\mb{P}((\g t)^{k+1}/(\g t)^{k-1}).
\end{equation}
Thus $\rho$ induces a $\G$ action on $\mathcal B_\rho^k$ by $\CP$-bundle automorphisms covering the natural action of $\G$ on $\deG$.
By construction, $\Lambda_z$ is contained in the fiber over $z$ and the $\G$-action via $\rho^k(\gamma, z)$ takes $\Lambda_z^k$ conformally to $\Lambda_{\gamma z}^k$.

\begin{lem}\label{lem: double boundary continuous into bundle}
    The map 
    \[(x,t)\in \deG^2 \mapsto (\xi_t^k(x), t) \in \mathcal B_\rho^k\]
    is a homeomorphism onto its image.
\end{lem}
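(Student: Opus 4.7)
The map $\phi\colon \deG^2 \to \mathcal B_\rho^k$, $\phi(x,t) = (\xi_t^k(x), t)$, is a continuous injection from the compact Hausdorff space $\deG^2$ into the Hausdorff space $\mathcal B_\rho^k$; once both properties are established, $\phi$ is automatically a homeomorphism onto its image by the standard compact-to-Hausdorff argument. Injectivity is immediate: the second coordinate of $\phi$ determines $t$, and then injectivity of $\xi_t^k$ from Proposition~\ref{prop:map xi} determines $x$.

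For continuity, I would identify $\mathcal B_\rho^k$ with the subspace $\{(V,t) \in \Gr_k(\C^d) \times \deG : t^{k-1} < V < t^{k+1}\}$ coming from its pullback structure, so that continuity of $\phi$ reduces to joint continuity of the map $F\colon \deG^2 \to \Gr_k(\C^d)$ defined by $F(x,t) = (x^{d-k} \cap t^{k+1}) + t^{k-1}$ for $x \neq t$ and $F(z,z) = z^k$. On the open set $\detG$, $F$ is continuous from the explicit formula together with the $k$-Anosov property, which guarantees that $x^{d-k} \cap t^{k+1}$ is a line varying continuously on configurations of transverse subspaces. Along the diagonal, $F|_\Delta = \xi^k$ is continuous. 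The substance is therefore joint continuity at a diagonal point $(z,z)$ when approached by pairs $(x_n, t_n) \in \detG$.

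My plan for this is to combine $\rho$-equivariance of $F$ with the cocompactness of the $\Gamma$-action on $\detG$ and the Anosov contracting dynamics on $\Gr_k(\C^d)$. Choose $\gamma_n \in \Gamma$ so that $(\gamma_n x_n, \gamma_n t_n)$ remains in a fixed compact subset of $\detG$; after a subsequence, $(\gamma_n x_n, \gamma_n t_n) \to (p, q) \in \detG$ with $p \neq q$. By equivariance $F(x_n, t_n) = \rho(\gamma_n)^{-1} F(\gamma_n x_n, \gamma_n t_n)$, and by continuity of $F$ on $\detG$, $F(\gamma_n x_n, \gamma_n t_n) \to F(p, q)$. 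Passing to a further subsequence, $\gamma_n$ acts as a convergence dynamics sequence with repeller $z \in \deG$ (since $(x_n, t_n) \to (z,z)$) and some attractor $\eta^+ \in \deG$. The $k$-Anosov property then implies that $\rho(\gamma_n)^{-1}$ has attractor $z^k \in \Gr_k(\C^d)$ and repeller $(\eta^+)^{d-k} \in \Gr_{d-k}(\C^d)$, and contracts $k$-planes transverse to $(\eta^+)^{d-k}$ uniformly on compact subsets to $z^k$. Applying $k$-hyperconvexity to the distinct triple $(p, \eta^+, q)$ yields $F(p,q) \pitchfork (\eta^+)^{d-k}$, so $\rho(\gamma_n)^{-1} F(p,q) \to z^k$ and therefore $F(x_n, t_n) \to z^k$, as desired.

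The main technical challenge is the uniform contraction statement on $\Gr_k(\C^d)$ for the divergent Anosov sequence $\rho(\gamma_n)^{-1}$, which is a standard consequence of the singular value gap but needs care in the non-stationary setting. A minor additional wrinkle is arranging $\eta^+ \notin \{p, q\}$ so that hyperconvexity applies to the triple $(p, \eta^+, q)$; this is possible because the cocompact domain defining $\gamma_n$ can be perturbed using density of $\Gamma$-orbits on $\deG$.
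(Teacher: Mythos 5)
Your skeleton is the right one, and it is worth noting that the paper itself disposes of this lemma in two lines: injectivity and continuity are quoted from (the proof of) Proposition~\ref{prop:map xi}, whose diagonal behaviour rests entirely on the black box Proposition~\ref{p.BeyP1} (= \cite[Proposition 2.13]{BeyP1}), and the homeomorphism statement then follows from the compact-to-Hausdorff argument exactly as you say. What you are really doing is unpacking that cited proposition, and your mechanism --- renormalize a degenerating pair by cocompactness, pass to convergence dynamics for $\gamma_n$, use the singular value gap to get contraction of $\rho(\gamma_n)^{-1}$ on $k$-planes transverse to the Cartan repeller, and use hyperconvexity to supply the transversality --- is indeed how such statements are proved.

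The genuine gap is the step you dismiss as a ``minor additional wrinkle.'' The case $\eta^+\in\{p,q\}$ is not avoidable by perturbing the compact fundamental domain or by post-composing the $\gamma_n$ with fixed elements: for $\gamma_n=g^{m_n}$ along a loxodromic $g$, with $x_n=g^{-n}u$ and $t_n=g^{-n/2}w$, every admissible renormalizing sequence with $m_n-n/2\to\infty$ yields $q=\lim\gamma_n t_n=g^{+}=\eta^+$, and in that configuration the transversality you need genuinely fails rather than merely being unavailable from hyperconvexity: e.g.\ if $\eta^+=p$ then $F(p,q)\supseteq p^{d-k}\cap q^{k+1}\neq 0$ is not transverse to $p^{d-k}$, and if $\eta^+=q$ the nesting of the boundary maps forces $F(p,q)\cap q^{d-k}\neq 0$ as well. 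So the contraction argument collapses exactly there, and ``density of $\Gamma$-orbits'' does not repair it. The standard repair --- and the missing idea --- is to renormalize a \emph{triple} rather than a pair: fix an auxiliary point $y\neq z$, note that $(x_n,t_n,y)$ is eventually pairwise distinct, and choose $\gamma_n$ so that $(\gamma_n x_n,\gamma_n t_n,\gamma_n y)$ lies in a fixed compact subset of $\deG^{(3)}$ (the action on distinct triples is properly discontinuous and cocompact), converging to a pairwise distinct $(p,q,r)$. Since $y$ is constant while $\gamma_n\to\infty$, the convergence-group dynamics force $\eta^-=z$ and $r=\eta^+$, hence $\eta^+\notin\{p,q\}$; hyperconvexity applied to the triple $(p,\eta^+,q)$ then yields $F(p,q)\pitchfork(\eta^+)^{d-k}$, which is an open condition, and the rest of your contraction argument goes through. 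With that substitution your proof is complete; without it, the key continuity assertion is not established.
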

\begin{proof}
    Continuity and injectivity follow directly from the proof of Proposition \ref{prop:map xi} and construction of $\mathcal B_\rho^k$.  The result follows as $\deG^2$ and $\mathcal B_\rho^k$ are both compact and Hausdorff. 
\end{proof}

\begin{dfn}[Foliated limit set]\label{def: double limit set}
    The {\emph{foliated limit set}} is defined to be 
    the image of the map from Lemma \ref{lem: double boundary continuous into bundle}.
\end{dfn}

\medskip

We can trivialize $\mathcal B_\rho^k \to \deG$ by finding three continuous sections in fiber-wise general position.
Indeed, fix a triple of pairwise distinct points $x,y,z\in \deG$, and define 
\[L: \mathcal B_\rho^k \to \CP\times \deG\] 
fiber-wise as the bi-holomorphism 
\[(\xi_t(x), \xi_t(y),\xi_t(z)) \mapsto (0,1,\infty)\]
in the fiber over $t$.
Then $L$ is a isomorphism of $\CP$-bundles trivializing $\mathcal B_\rho^k$. 

Conjugating the $\G$ action \eqref{eqn: mobius cocycle} on $\mathcal B_\rho^k$  by $L$ defines an action of $\G$ on $\CP\times \deG$ by $\CP$-bundle automorphisms, hence a continuous map
\begin{equation}\label{eqn: trivial bundle cocycle}
    \rho_L^k: \G \times \deG \to \PSL(2,\C)
\end{equation}

satisfying the cocycle property
\[\rho_L^k(\alpha\beta, t) = \rho_L^k(\alpha, \beta t)\rho_L^k(\beta, t),\]
for all $\alpha, \beta\in \G$ and $t\in \deG$.
Explicitly, the $\G$ action on $\CP \times \deG$ is given,  for $(z,t) \in \CP\times \deG$ and $\gamma\in \Gamma$, by 
\begin{equation}\label{eqn: conformal bundle action}
    \gamma \in \Gamma : (z,t)\mapsto\left(\rho_L^k(\gamma,t)z,\gamma t\right).
\end{equation}

Denote by $L_t$ the restriction of $L$ to the fiber over $t$.  

\begin{lem}\label{l.conti}
Let $\rho:\Gamma\to{\rm PSL}(d,\mb{C})$ be $k$-hyperconvex. Then the sets $L_t(\Lambda_t^k)$ vary continuously with $t\in\partial\Gamma$ in the Hausdorff topology on closed subsets of $\CP$.
\end{lem}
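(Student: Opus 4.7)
The plan is to reduce Hausdorff continuity of the family $\{L_t(\Lambda_t^k)\}_{t\in\deG}$ to uniform joint continuity of a single map on a compact space. Concretely, I would introduce
\[
\Psi: \deG \times \deG \to \CP, \qquad \Psi(x,t) := L_t(\xi_t^k(x)),
\]
and observe that $\Psi(\,\cdot\,, t)$ parametrizes $L_t(\Lambda_t^k)$ for each fixed $t$.

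First I would check that $\Psi$ is continuous. By Lemma~\ref{lem: double boundary continuous into bundle}, the map $(x,t)\mapsto (\xi_t^k(x),t)$ is continuous from $\deG \times \deG$ into $\mathcal B_\rho^k$. Since $L:\mathcal B_\rho^k\to \CP\times \deG$ is a continuous bundle isomorphism (the three defining sections $t\mapsto \xi_t^k(x), \xi_t^k(y), \xi_t^k(z)$ for the fixed reference triple are continuous by the same lemma, and inverting the unique fiber-wise M\"obius transformation that normalizes them to $(0,1,\infty)$ depends continuously on these three points), the composition $L\circ\left((x,t)\mapsto(\xi_t^k(x),t)\right)$ is continuous; projecting to the $\CP$-factor yields the continuity of $\Psi$.

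Next, since $\deG\times\deG$ is compact and Hausdorff, I would fix a compatible metric on $\CP$ (e.g.\ the spherical one) and any compatible metric on $\deG$, and appeal to the Heine--Cantor theorem: $\Psi$ is uniformly continuous. Thus for every $\varepsilon>0$ there exists $\delta>0$ such that $d(t,t')<\delta$ implies $d(\Psi(x,t),\Psi(x,t'))<\varepsilon$ for \emph{every} $x\in\deG$. This uniform estimate translates word-for-word into the two inclusions
\[
L_{t'}(\Lambda_{t'}^k) \subset N_\varepsilon\bigl(L_t(\Lambda_t^k)\bigr) \quad\text{and}\quad L_t(\Lambda_t^k) \subset N_\varepsilon\bigl(L_{t'}(\Lambda_{t'}^k)\bigr),
\]
which is exactly Hausdorff continuity of $t\mapsto L_t(\Lambda_t^k)$ at $t$.

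I do not anticipate a serious obstacle here: all the non-trivial content (the well-definedness, injectivity, and joint continuity of $(x,t)\mapsto \xi_t^k(x)$) is already packaged in Proposition~\ref{prop:map xi} and Lemma~\ref{lem: double boundary continuous into bundle}. The only point that requires a moment of care is justifying that the trivialization $L$ is itself continuous in $t$, i.e.\ that the family of normalizing M\"obius maps depends continuously on $t$; this is immediate from continuity of the three reference sections and the fact that the map from triples of distinct points in $\CP$ to the unique normalizing element of $\PSL(2,\C)$ is continuous.
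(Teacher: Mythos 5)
Your proof is correct and takes essentially the same route as the paper's, which simply invokes Lemma \ref{lem: double boundary continuous into bundle} together with continuity of $L$ and declares that the result follows. In fact you make explicit the one step the paper leaves implicit: that joint continuity of $(x,t)\mapsto L_t(\xi_t^k(x))$ on the compact space $\deG\times\deG$ upgrades, via uniform continuity, to the two $\varepsilon$-neighborhood inclusions needed to pass from pointwise convergence of the parametrizations to Hausdorff convergence of their images.
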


\begin{proof}
    From Lemma \ref{lem: double boundary continuous into bundle} and continuity of $L$, for each $x\in \deG$ the map $t\mapsto L_t(\xi_t^k(x)) \in \CP$ is continuous.  
    The result follows.
\end{proof}

Note that Equation \eqref{eqn: conformal bundle action} also defines an action of $\G$ on the trivial bundle $\HH^3\times \deG$ which is isometric on the fibers and covers the natural action of $\G$ on $\deG$.

\subsection{Wedge products and $k$-hyperconvexity}
We conclude the section by discussing the link between $k$-hyperconvexity and $1$-hyperconvexity of the composition of a representation with the $k$-th exterior power. This will be needed in the last section of the paper. 
\begin{prop}[{\cite[Proposition 8.11]{PSW:HDim_hyperconvex}}]
\label{prop:j to 1i}
If $\rho:\Gamma\to{\rm PSL}(d,\mb{C})$ is $k$-hyperconvex, then $\wedge^k\rho:\Gamma\to{\rm PSL}(\wedge^k\mb{C}^d)$ is 1-hyperconvex. Furthermore, if $\Lambda_\rho^k\subset{\rm Gr}_k(\mb{C}^d)$ is the $k$-th limit set of $\rho$, then \[\wedge^k\Lambda_\rho^k=\{\wedge^k\xi^k(t)\left|\,t\in\partial\Gamma\right.\}\subset\mathbb P(\wedge^k\mb{C}^d)\] 
is the projective limit set of $\wedge^k\rho$.
\end{prop}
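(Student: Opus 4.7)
The plan is to verify in sequence: (i) the Anosov conditions required for $\wedge^k\rho$ to be $1$-hyperconvex in $\PSL(\wedge^k\C^d)$, (ii) the identification of the relevant boundary maps of $\wedge^k\rho$, which simultaneously yields the description of its projective limit set, and (iii) the transversality inequality \eqref{e.hyp} for $\wedge^k\rho$ at level $1$.

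For step (i), I would use that, setting $D=\binom{d}{k}$, the singular values of $\wedge^k A$ are the products $\prod_{i\in I}\sigma_i(A)$ over $k$-subsets $I\subset\{1,\dots,d\}$.  Arranged in decreasing order, the first singular value gap of $\wedge^k A$ equals the $k$-th gap of $A$, while the second gap is at least $\min\{\log\sigma_{k-1}(A)-\log\sigma_k(A),\,\log\sigma_{k+1}(A)-\log\sigma_{k+2}(A)\}$.  Since $k$-hyperconvexity forces $\rho$ to be $\{k-1,k,k+1\}$-Anosov (using the equivalence between $j$-Anosov and $(d-j)$-Anosov for the middle index), $\wedge^k\rho$ is then $\{1,2,D-2,D-1\}$-Anosov, which contains the Anosov hypothesis of $1$-hyperconvexity.

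For step (ii), I would consider the candidate boundary maps
\[
\tilde\xi^1(t)=[\wedge^k t^k], \qquad \tilde\xi^2(t)=\wedge^{k-1}t^{k-1}\wedge t^{k+1}, \qquad \tilde\xi^{D-1}(t)=\{\omega\in\wedge^k\C^d : \omega\wedge\wedge^{d-k}t^{d-k}=0\};
\]
the middle subspace has dimension $2$ because $t^{k+1}/t^{k-1}$ does.  Each $\tilde\xi^j$ is continuous and $\wedge^k\rho$-equivariant.  Transversality $\tilde\xi^1(t)\pitchfork\tilde\xi^{D-1}(s)$ for $t\neq s$ reduces to non-vanishing of $\wedge^k t^k\wedge\wedge^{d-k}s^{d-k}$, i.e., to $t^k\pitchfork s^{d-k}$, which is the $k$-Anosov property of $\rho$; transversality $\tilde\xi^2\pitchfork\tilde\xi^{D-2}$ reduces similarly to $\{k-1,k+1\}$-Anosov.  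Uniqueness of Anosov boundary maps \cite{BPS,KLP} then forces $\tilde\xi^j=\xi^j_{\wedge^k\rho}$ for $j=1,2,D-1$, so in particular the $1$-limit set of $\wedge^k\rho$ is precisely $\wedge^k\Lambda_\rho^k$.

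For step (iii), I would fix distinct $x,y,z\in\deG$ and take $\omega\in\tilde\xi^2(z)\cap\tilde\xi^{D-1}(x)\cap\tilde\xi^{D-1}(y)$, writing $\omega=\nu\wedge u$ with $\nu$ generating the line $\wedge^{k-1}z^{k-1}$ and $u\in z^{k+1}$ (well defined modulo $z^{k-1}$).  The relation $\omega\wedge\wedge^{d-k}x^{d-k}=0$ says that $d$ specific vectors fail to span $\C^d$; since $z^{k-1}+x^{d-k}$ has dimension $d-1$ by the $k$-Anosov property, this forces $u\in z^{k-1}+x^{d-k}$, hence by the modular law $u\in z^{k-1}+(z^{k+1}\cap x^{d-k})$.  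The analogous condition at $y$, combined with the $k$-hyperconvexity of $\rho$ (Equation \eqref{e.hyp}), then forces $u\in z^{k-1}$, so $\omega=0$.  The main conceptual step is the identification of the second boundary map of $\wedge^k\rho$ as $\wedge^{k-1}z^{k-1}\wedge z^{k+1}$; once this is in hand, the $1$-hyperconvexity of $\wedge^k\rho$ is a verbatim transcription of the $k$-hyperconvexity of $\rho$.
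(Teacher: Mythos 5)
Your proposal is correct and follows essentially the same route as the paper's proof: the same computation of the first two singular value gaps of $\wedge^k\rho(\gamma)$, the same explicit candidate boundary maps (your $\tilde\xi^2$ and $\tilde\xi^{D-1}$ are the paper's $\Xi\circ(\xi^{k-1},\xi^{k+1})$ and $\Psi\circ\xi^{d-k}$), and the same final argument writing a putative nonzero $\omega$ in the triple intersection as $\nu\wedge u$ and using the modular law to reduce to Equation \eqref{e.hyp}. The only differences are that you make explicit two points the paper leaves to the reader, namely the identification of the candidate maps with the Anosov boundary maps via uniqueness and the modular-law step.
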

\begin{proof}
We include the proof for completeness. Let us first verify that if $\rho$ is $\{k-1,k,k+1\}$-Anosov, then $\wedge^k\rho$ is $\{1,2\}$-Anosov. Indeed if we fix on $\wedge^k\C^d$ the Hermitian product induced from the chosen Hermitian product on $\C^d$, and given $\g\in \G$ we denote by $\sigma_i$ the singular values of $\rho(\g)$ and by $\sigma_i^k$ the singular values of $\wedge^k\rho(\g)$ we have $\sigma_1^k = \sigma_1 ... \sigma_k$, $\sigma_2^{k} = \sigma_1...  \sigma_{k-1} \sigma_{k+1}$ and  $\sigma_3^k=\max\{\sigma_1 ... \sigma_{k-2} \sigma_k \sigma_{k+1}, \sigma_1 ...\sigma_{k-1}\sigma_{k+2}\}$.  In particualr, the first singular value gap of $\wedge^k\rho(\g)$ is $1/2\log(\sigma_{k+1}/\sigma_{k}$) and the second is $\min \{1/2\log(\sigma_{k}/\sigma_{k-1}) ,1/2\log(\sigma_{k+2}/\sigma_{k+1})  \}$, which shows the first claim.

Next we discuss how to obtain explicit boundary maps for $\wedge^k\rho$. 
As above, we denote by $\calF_{k-1,k+1}$ the partial flag manifold consisting of nested subspaces of the given dimension, and consider the algebraic maps
$$\begin{array}{cccl}
\Xi:&\calF_{k-1,k+1}&\to&\Gr_2(\wedge^k\C)\\
&(\langle y_1,\ldots, y_{k-1}\rangle,Y^{k+1})&\mapsto &\{ y_1\wedge\ldots\wedge y_{k-1}\wedge y|\, y\in Y^{k+1}\}
\end{array}$$
and 
$$\begin{array}{cccl}
\Psi:&\Gr_{d-k}(\C)&\to&\Gr_{N-1}(\wedge^k\C)\\
&\langle y_1,\ldots, y_{d-k}\rangle&\mapsto &\{\langle x_1,\ldots, x_{k}\rangle|\, x_1\wedge\ldots\wedge x_k\wedge y_1\ldots\wedge y_{d-k}=0\},
\end{array}$$
where $N$ denotes $\dim_\C(\wedge^k\C^d)$.
It is easy to verify that the maps are well defined, and that the compositions  $\Xi\circ(\xi^{k-1},\xi^{k+1}):\deG\to \Gr_2(\wedge^k\C)$ and $\Psi\circ\xi^{d-k}:\deG\to \Gr_{N-1}(\wedge^k\C)$ are the Anosov boundary maps for $\wedge^k\rho$.

In order to verify that $\wedge^k\rho$ is 1-hyperconvex we need to check that for every pairwise distinct triple $x,y,z\in\deG$ it holds 
$$\left(\Psi\circ\xi^{d-k}(x)\right)\cap \left(\Psi\circ\xi^{d-k}(y)\right)\cap\left(\Xi\circ(\xi^{k-1},\xi^{k+1})(z)\right)=\{0\}.$$
Assume that this is not the case, then we can find a non-zero vector $v$ in the intersection. Since $v$ belongs to  $\Xi\circ(\xi^{k-1},\xi^{k+1})(z)$, it has the form 
$v=z_1\wedge\ldots\wedge z_{k-1}\wedge w$ for $z_1,\ldots, z_{k-1}$ a basis of $\xi^{k-1}(z)$ and $w\in \xi^{k+1}(z)\setminus \xi^{k-1}(z)$.  Since $x^{d-k}$ is transverse to $z^{k-1}$ that $v$ belongs to $\Psi\circ\xi^{d-k}(x)$ implies that $w\in x^{d-k}\oplus z^{k-1}$, and similarly that $v$ belongs to $\Psi\circ\xi^{d-k}(y)$ implies that $w\in y^{d-k}\oplus z^{k-1}$. This contradicts $k$-hyperconvexity of $\rho$.
\end{proof}

The bundles constructed in the previous subsection are compatible with  taking wedge powers as in Proposition \ref{prop:j to 1i}.

\begin{prop}\label{prop: bundles equivalent wedge}
    Suppose $\rho: \G \to \PSL(d,\C)$ is $k$-hyperconvex.
    Then there is a natural $\Gamma$-equivariant $\CP$-bundle isomorphism 
    \[\mathcal B^k_{\rho} \cong \mathcal B^1_{\wedge^k\rho}\]
    over the identity $\deG\to \deG$ compatible with the canonical parameterizations of the foliated limit sets.
\end{prop}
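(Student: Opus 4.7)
The plan is to build the isomorphism fiber by fiber using wedge products and then check the assembly, the equivariance, and the matching of the foliated limit sets.

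First, I would define a fiberwise projective isomorphism. For each $t \in \deG$, choose a nonzero vector $\omega_t \in \wedge^{k-1} t^{k-1}$ (well-defined up to scalar, since $t^{k-1}$ has dimension $k-1$). Define
\[
\Phi_t : \mb P(t^{k+1}/t^{k-1}) \longrightarrow \mb P\bigl(\Xi(t^{k-1}, t^{k+1})\bigr), \qquad [y + t^{k-1}] \longmapsto [\omega_t \wedge y],
\]
where $\Xi$ is the algebraic map introduced in the proof of Proposition \ref{prop:j to 1i}. This is independent of the chosen representative in the quotient (if $y' - y \in t^{k-1}$ then $\omega_t \wedge (y'-y) = 0$) and independent of $\omega_t$ up to scalar. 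The linear map $y \mapsto \omega_t \wedge y$ sends $t^{k+1}$ onto $\Xi(t^{k-1}, t^{k+1})$ with kernel exactly $t^{k-1}$, so $\Phi_t$ is a projective isomorphism between two copies of $\CP$.

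Next, I would show that these fiberwise maps assemble into a continuous $\CP$-bundle isomorphism $\Phi : \mathcal B^k_\rho \to \mathcal B^1_{\wedge^k \rho}$ covering the identity on $\deG$. The two bundles are pullbacks under $t \mapsto (t^{k-1}, t^{k+1})$ and $t \mapsto \Xi(t^{k-1}, t^{k+1})$ respectively; continuity of $\xi^{k-1}, \xi^{k+1}$ together with a local continuous choice of $\omega_t$ reduces this to a routine check. For $\Gamma$-equivariance, I would use multiplicativity of wedge powers: $\wedge^k\rho(\gamma)(\omega_t \wedge y) = \bigl(\wedge^{k-1}\rho(\gamma)\,\omega_t\bigr) \wedge \rho(\gamma) y$, and $\wedge^{k-1}\rho(\gamma)\,\omega_t$ is a nonzero scalar multiple of $\omega_{\gamma t}$ because $\rho(\gamma) t^{k-1} = (\gamma t)^{k-1}$. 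This says precisely that $\Phi$ intertwines the cocycles \eqref{eqn: mobius cocycle} for $\rho$ and for $\wedge^k \rho$.

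The step I expect to need the most care is the compatibility with the foliated limit sets, that is, showing $\Phi_t(\xi^k_t(x)) = (\xi^1_{\wedge^k\rho})_t(x)$ for all pairs $x \neq t$ in $\deG$. Unwinding the definition of $\xi^1_z$ for the $1$-hyperconvex representation $\wedge^k\rho$, and using that its $(N-1)$-th boundary map is $\Psi \circ \xi^{d-k}$ (again from the proof of Proposition \ref{prop:j to 1i}), the identity reduces to the claim that for $w \in t^{k+1}$,
\[
\omega_t \wedge w \in \Psi(x^{d-k}) \quad \Longleftrightarrow \quad w \in (t^{k+1} \cap x^{d-k}) + t^{k-1}.
\]
By definition of $\Psi$, the left-hand side is equivalent to $w \in t^{k-1} + x^{d-k}$, and the equivalence then follows from the modular law $t^{k+1} \cap (t^{k-1} + x^{d-k}) = t^{k-1} + (t^{k+1} \cap x^{d-k})$, valid because $t^{k-1} \subset t^{k+1}$. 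On the level of projective classes this reads exactly $\Phi_t([x^{d-k} \cap t^{k+1}]) = [\Psi(x^{d-k}) \cap \Xi(t^{k-1}, t^{k+1})]$, completing the matching of canonical parameterizations.
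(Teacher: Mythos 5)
Your proof is correct and follows essentially the same route as the paper: both define the fiberwise map $[v]\mapsto[\omega_t\wedge v]$ with $\omega_t$ spanning $\wedge^{k-1}t^{k-1}$, check equivariance via multiplicativity of wedge powers, and verify compatibility with the parameterizations $\xi^k_t$ and $\Psi\circ\xi^{d-k}$, $\Xi\circ(\xi^{k-1},\xi^{k+1})$ from Proposition \ref{prop:j to 1i}. Your explicit use of the modular law fills in the ``direct computation'' the paper leaves to the reader.
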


\begin{proof}
Recall that the fiber over a point $z$ in $\mathcal B^k_{\rho}$ is the projective space $\P(z^{k+1}_\rho/z^{k-1}_\rho)$. We then have a natural map
$$\begin{array}{cccl}
\Phi:&\mathcal B^k_{\rho}&\to&\P(\wedge^k\C)\\
&([v],z)&\mapsto & [z_1\wedge\ldots\wedge z_{k-1}\wedge v],
\end{array}$$
where $z^{k-1}=\langle z_1,\ldots, z_{k-1}\rangle$. It is easy to verify that the map is well defined and is projective on the fibers. Furthermore it follows from the construction that $\Phi([v],z)<\Xi(z^{k-1}_\rho,z^{k+1}_\rho)=\xi^2_{\wedge^k\rho}(z)$. Of course, the map $\Phi$ is $\G$-equivariant. In order to conclude the proof it is enough to prove the commutativity of the diagram
$$\xymatrix{
\mathcal B^k_\rho  \ar[r]^\Phi &\P(\wedge^k\C)\\
\deG\times\deG \ar[u]^{\xi_{\boldsymbol{\cdot}}^k} \ar[ur]_{\phi_{\boldsymbol{\cdot}}^1}        &}$$
where, as usual, we denote by $\xi_{\boldsymbol{\cdot}}^k:\deG\times\deG\to\mathcal B_\rho^k$ the parametrization of the foliated limit set for $\rho$, and, to avoid confusion, we denote by $\phi_{\boldsymbol{\cdot}}^1:\deG\times\deG\to \P(\wedge^k\C)$ the parametrization of the foliated limit set for $\wedge^k\rho$ so that $\phi_z^1(y)=\Psi(y^{d-k})\cap \Xi(z^{k-1},z^{k+1})$. It is then a direct computation to check using the definition of the maps $\Psi$ and $\Xi$ that 
if $y^{d-k}\cap z^{k+1}=\langle v\rangle$, so that $\xi^k_z(y)=[v]$,  then 
$$\Phi\circ\xi_z^k(y)=z_1\wedge\ldots\wedge z_{k-1}\wedge v= \phi^1_z(y),$$
which concludes the proof.

\end{proof}
\section{Lebesgue measure}

The goal of the section is to prove the following theorem. 

\begin{thm}\label{thm:Leb0}
Let $\Gamma$ be a hyperbolic group whose boundary is not a sphere. Let $\rho:\Gamma\to{\rm PSL}(d,\mb{C})$ be a $k$-hyperconvex representation. Then for every $z\in\partial\Gamma$ the Lebesgue measure of $\xi_z^k(\partial\Gamma)$ vanishes.
\end{thm}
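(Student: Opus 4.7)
The plan is to argue by contradiction: assuming $\mathrm{Leb}(\Lambda_z)>0$ for some $z\in\partial\Gamma$, I will deduce $\partial\Gamma\cong S^2$. By Propositions~\ref{prop:j to 1i} and~\ref{prop: bundles equivalent wedge}, composing $\rho$ with the $k$-th exterior power reduces to the case $k=1$. I work in the trivialization $L\colon\mathcal{B}_\rho^1\to\CP\times\partial\Gamma$ from \S\ref{sec:bundle}, writing $\tilde\Lambda_t := L_t(\Lambda_t)\subset\CP$. Since the case of virtually cyclic $\Gamma$ is immediate (the limit set has at most two points), I assume $\Gamma$ is non-elementary. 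The strategy is to blow up at a Lebesgue density point of $\tilde\Lambda_z$ using the North--South dynamics of the cocycle to conclude $\tilde\Lambda_\eta = \CP$ for some $\eta\in\partial\Gamma$: the continuous injection $\xi_\eta^1\colon\partial\Gamma\to\CP$ (Proposition~\ref{prop:map xi}) would then be a continuous bijection from a compactum, hence a homeomorphism, contradicting the hypothesis that $\partial\Gamma$ is not a sphere.

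By the Lebesgue density theorem, pick $p\in\tilde\Lambda_z$ a density-$1$ point and set $x := (L_z\circ\xi_z^1)^{-1}(p)\in\partial\Gamma\setminus\{z\}$. By non-elementarity, select $\gamma_n\in\Gamma$ with $\gamma_n^{-1}o\to x$ and $\gamma_n o\to\eta$ for some $\eta\neq x$ (e.g.\ large powers of loxodromic elements with attracting-repelling pairs approximating $(\eta,x)\in\partial^{(2)}\Gamma$). Let $M_n := \rho_L^1(\gamma_n,z)\in\PSL(2,\C)$, so that $M_n(\tilde\Lambda_z) = \tilde\Lambda_{\gamma_n z}$. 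The central dynamical claim is that $M_n$ converges locally uniformly on $\CP\setminus\{p\}$ to the constant $L_\eta(\xi_\eta^1(\eta))$. This follows from the $\{1,2\}$-Anosov properties of $\rho$: by $1$-Anosov, $z^2\cap\xi^{d-1}(x) = \xi_z^1(x)$ is $1$-dimensional, and by $2$-Anosov $z^2\cap\xi^{d-2}(x)=0$, so the restriction $\rho(\gamma_n)|_{z^2}\colon z^2\to(\gamma_n z)^2$ has singular values comparable to $\sigma_1(\rho(\gamma_n))$ and $\sigma_2(\rho(\gamma_n))$, whose ratio diverges by $1$-Anosov; moreover, vectors in $z^2\setminus\xi_z^1(x)$ are sent projectively in the direction of $\xi^1(\eta)$ in the limit.

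Equivalently, for any compact $K\subset\CP\setminus\{L_\eta(\xi_\eta^1(\eta))\}$, the preimages $M_n^{-1}(K)$ are contained in neighborhoods of $p$ of diameters tending to $0$, with $\mathrm{Leb}(M_n^{-1}(K))$ comparable to the Lebesgue measure of those neighborhoods (by conformality of $M_n^{-1}$). Combined with density $1$ of $\tilde\Lambda_z$ at $p$, this yields $\mathrm{Leb}(\tilde\Lambda_z\cap M_n^{-1}(K))>0$ for $n$ large, hence $K\cap\tilde\Lambda_{\gamma_n z} = M_n(\tilde\Lambda_z\cap M_n^{-1}(K))\neq\emptyset$. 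By Lemma~\ref{l.conti}, $\tilde\Lambda_{\gamma_n z}\to \tilde\Lambda_\eta$ in the Hausdorff topology; if $\tilde\Lambda_\eta\subsetneq\CP$, I would choose a small closed ball $K$ in the complement, automatically disjoint from $L_\eta(\xi_\eta^1(\eta))\in\tilde\Lambda_\eta$: Hausdorff convergence would force $K\cap\tilde\Lambda_{\gamma_n z}=\emptyset$ for $n$ large, a contradiction. Hence $\tilde\Lambda_\eta = \CP$, completing the proof.

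The hardest step is the dynamical statement about $M_n$: unwinding the $\{1,2\}$-Anosov hypotheses to establish the North--South behavior of the M\"obius maps $M_n$ as a sequence in $\PSL(2,\C)$, and in particular pinning the repelling direction at $p=L_z(\xi_z^1(x))$ uniformly in $n$ via the fiber structure of $\P(z^2)$. The subsequent density-point blow-up is then a routine application of the Lebesgue density theorem.
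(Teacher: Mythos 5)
Your overall strategy---blow up at a Lebesgue density point of $\tilde\Lambda_z=L_z(\Lambda_z)$ using the degeneration of the M\"obius cocycle, and contradict the Hausdorff convergence $\tilde\Lambda_{\gamma_n z}\to\tilde\Lambda_\eta$ of Lemma \ref{l.conti}---is the same blow-up idea as the paper's, which packages it instead via the visual measures $\nu_x$ on $\partial\H^3$ and the $\Gamma$-invariant function $\Phi$ on $\H^3\times\deG$. The difference in language is not the issue; the issue is a quantitative gap at the density-point step. You assert that for a compact ball $K$ avoiding the attracting point, $M_n^{-1}(K)$ is contained in neighborhoods of $p$ of vanishing diameter with $\mathrm{Leb}(M_n^{-1}(K))$ comparable to the measure of those neighborhoods ``by conformality''. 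Conformality does give that $M_n^{-1}(K)$ is a round disk of some radius $s_n\to 0$ lying near $p$, but the Lebesgue density theorem at $p$ only produces a contradiction if these disks occupy a definite fraction of balls \emph{centered at} $p$, i.e.\ if $d\bigl(p,M_n^{-1}(K)\bigr)=O(s_n)$. Locally uniform North--South convergence of $M_n$ with exceptional point $p$ only yields $d\bigl(p,M_n^{-1}(K)\bigr)\to 0$; if the ratio $d\bigl(p,M_n^{-1}(K)\bigr)/s_n$ diverges, the disks can sit entirely in $\CP\setminus\tilde\Lambda_z$ without contradicting density at $p$. Your proposed choice of $\gamma_n$ (powers of loxodromics whose fixed-point pairs merely \emph{approximate} $(\eta,x)$) cannot supply this control: repelling fixed points of loxodromic elements form a countable, hence Lebesgue-null, set, so they generically miss the density point $x$, and for such $\gamma_n$ the disks $M_n^{-1}(K)$ accumulate at the image of the repelling fixed point, or approach $p$ with unbounded eccentricity.

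The missing ingredient is precisely what the paper uses: cocompactness of the $\Gamma$-action on the space of distinct triples $\partial\Gamma^{(3)}$. The paper fixes an auxiliary sequence $\theta_n\to\theta$ ($\theta$ the density point) and takes the basepoints $\pi(t,\theta_n,\theta)$ \emph{on the geodesic} $[\xi^k_t(t),\xi^k_t(\theta)]$, so the approach to the density point is radial (conical) by construction; choosing $\gamma_n$ so that $\gamma_n^{-1}(t,\theta_n,\theta)$ lies in a compact fundamental domain then yields, via $\inf_{x\in\pi(K)}\nu_x(D)>0$, exactly the uniform lower bound that your ``comparable measure'' claim needs. If you instead select your $\gamma_n$ by this normalization of triples (rather than by loxodromic approximation) and carry out the resulting cross-ratio/Cartan estimate showing that the most-expanded point of $M_n$ lies within $O(s_n)$ of $p$, your argument closes; as written, it does not. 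Two minor remarks: the reduction to $k=1$ via Propositions \ref{prop:j to 1i} and \ref{prop: bundles equivalent wedge} is valid but unnecessary (the paper works directly in $\P(t^{k+1}/t^{k-1})$), and your identification of the attracting constant $L_\eta(\xi^1_\eta(\eta))$ and of the Hausdorff limit $\tilde\Lambda_\eta$ is correct.
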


Abusing notation in this section, we tacitly identify $B_\rho^k$ with  $\CP\times \deG$ via $L$, and drop the $L$ from the $\PSL(2,\C)$-cocycle defined in \eqref{eqn: trivial bundle cocycle}.
Thus, we write 
\[\gamma : (x,t) \mapsto (\rho^k(\gamma,t)x, \gamma t) \in \CP\times \deG\]
for the $\rho$-action on $\mathcal B_\rho^k$ in the coordiantes afforded by $L$.  

Following Ahlfors \cite{Ahlfors66} (see also \cite[Theorem 4.134]{Kap:book} or \cite[Theorem 3.18]{MatTa}),
we study $\G$-equivariant fiberwise harmonic extensions of the characteristic function of  the foliated limit set (Definition \ref{def: double limit set}) in $\CP\times\deG$ to the bundle $\HH^3\times\deG$ defined in Section \ref{sec:bundle}.  

Key in the study of $\G$-equivariant harmonic functions on $\HH^3$ is the family of \emph{harmonic} or \emph{visual} measures $\{\nu_x\}_{x\in \H^3}$ on $\partial \H^3$.
The visual measure $\nu_x$ is obtained by pushing forward the rotationally symmetric solid angle measure on $T_x^1\H^3$ to $\partial \H^3$ via the exponential map.
This family of measures satisfies that if $f: \partial\H^3\to \R$ is $\nu_{x}$-integrable, then 
\begin{equation}\label{eqn: measure equivariance}
    \int f ~d\nu_{x} = \int f\circ \gamma\inverse~ d\nu_{\gamma x}
\end{equation}
for all $\gamma \in \PSL(2,\C)$ (see, e.g., \cite[\S3.12]{Kap:book}).

There is an extension operator from $L^1(\partial \H^3)$\footnote{The density $\frac{d\nu_x}{d\nu_y}$ is bounded, so the $\nu_x$ and $\nu_y$ integrable functions on $\partial\H^3$ coincide,  for all $x$ and $y$, and we denote the integrable functions by $L^1(\partial \H^3)$.} to harmonic functions on $\HH^3$.  
For $f\in L^1(\partial \H^3)$, the harmonic function $P[f]: \HH^3\to \R$ defined by 
\[P[f](x) = \int f ~d\nu_x\]
 is continuous along conical sequences converging to Lebesgue density points\footnote{Recall that a Lebesgue density point $p$ of an integrable function $f : \partial \H^3\to \R$ is such that $\frac{1}{\nu(B)}\int_B f ~ d\nu$ converges to $f(p)$ for nice sets $B$ shrinking to $p$.} of $f$  (see \cite[Proposition 3.20]{MatTa} or \cite{Ahlfors:Poisson} for a proof).

\begin{lemma}
    Suppose $f : \partial \H^3 \times \deG \to \R$  satisfies $f(\cdot , t) \in L^1(\partial \H^3)$ for all $t\in \deG$ and $f$ is $\Gamma$-invariant, i.e., $f(\gamma.(\theta, t)) = f(\theta, t)$ for all $\gamma \in \Gamma$.
    Then the function $P[f]: \H^3\times \deG \to \R$ defined by 
    \[P[f](x, t) := \int f(\theta, t) ~d\nu_x(\theta)\]
    is  $\G$-invariant and, for every $t\in\deG$ and every conical sequence $x_n$ converging to a Lebesgue density point $\theta$ of $f(\cdot ,t)$, $P[f](x_n, t)$ converges to $f(\theta,t)$.
\end{lemma}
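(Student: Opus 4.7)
The plan is to prove the two conclusions separately, as they depend on essentially independent ingredients: the $\Gamma$-invariance is a direct computation exploiting the cocycle structure of the action from \eqref{eqn: conformal bundle action} together with the transformation rule \eqref{eqn: measure equivariance} for visual measure, while the convergence statement is a fiberwise application of the classical Fatou theorem for Poisson integrals.

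To prove $\Gamma$-invariance of $P[f]$, I would fix $\gamma\in \Gamma$ and $(x,t)\in \H^3\times \deG$ and abbreviate $g=\rho^k(\gamma,t)\in\PSL(2,\C)$, so that $\gamma.(x,t)=(gx,\gamma t)$ by the isometric extension of \eqref{eqn: conformal bundle action} to $\H^3$. Unwinding the definition of $P[f]$ gives
\[
P[f](\gamma.(x,t))=\int f(\theta,\gamma t)\, d\nu_{gx}(\theta).
\]
The key move is to apply the transformation rule \eqref{eqn: measure equivariance} to the integrand $\theta\mapsto f(\theta,\gamma t)$ and to the element $g\in\PSL(2,\C)$; this rewrites the integral as $\int f(g\theta',\gamma t)\, d\nu_x(\theta')$. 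Finally, the $\Gamma$-invariance hypothesis on $f$ yields $f(g\theta',\gamma t)=f(\gamma.(\theta',t))=f(\theta',t)$, and hence $P[f](\gamma.(x,t))=P[f](x,t)$.

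For the convergence statement, I would observe that the integral defining $P[f](\cdot,t)$ only involves the slice $f(\cdot,t)\in L^1(\partial\H^3)$, so $P[f](\cdot,t):\H^3\to\R$ is precisely the Poisson extension of the $L^1$ function $f(\cdot,t)$. Non-tangential convergence to a Lebesgue density point along a conical sequence is then immediate from the cited Fatou-type theorem for Poisson integrals (see \cite[Proposition 3.20]{MatTa} or \cite{Ahlfors:Poisson}), applied in the single fiber over $t$.

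The lemma is essentially a packaging result combining the cocycle structure of the bundle with a standard analytic fact, so I do not foresee a substantial obstacle. The only point requiring any care is ensuring that \eqref{eqn: measure equivariance} genuinely applies to the cocycle element $g=\rho^k(\gamma,t)$; this is automatic since the transformation rule was stated for all of $\PSL(2,\C)$, but it is worth noting explicitly because the cocycle depends on the basepoint $t$ and the invariance of $f$ must be used \emph{after} the change of variables rather than before.
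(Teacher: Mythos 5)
Your proposal is correct and takes essentially the same route as the paper: the $\Gamma$-invariance is the same two-step computation combining the visual-measure transformation rule \eqref{eqn: measure equivariance} with the hypothesis $f\circ\gamma=f$, and the convergence statement is the same fiberwise appeal to the classical Fatou theorem for Poisson integrals. The only (immaterial) difference is that the paper applies the invariance of $f$ \emph{before} the change of variables, using the cocycle identity $\rho^k(\gamma\inverse,\gamma t)=\rho^k(\gamma,t)\inverse$, so your closing remark that the invariance ``must'' be used after the change of variables is a slight overstatement --- both orders work.
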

\begin{proof}
    Convergence along conical sequences to Lebesgue points is easy to verify and follows directly from the classical case.  To show $\G$-invariance, we compute
    \begin{align*}
        P[f](\gamma.(x,t)) & = P[f](\rho^k(\gamma,t)x,\gamma t)\\
        &= \int f(\theta, \gamma t)~d\nu_{\rho^k(\gamma, t)x}(\theta) \\
        &=\int f\circ \gamma\inverse (\theta,\gamma t) ~d\nu_{\rho^k(\gamma ,t)x}(\theta)\\
        &=\int f(\rho^k(\gamma\inverse, \gamma t)\theta,t) ~d\nu_{\rho^k(\gamma ,t)x}(\theta) \\
        &=\int f(\theta, t)~d\nu_x(\theta) = P[f](x,t).
    \end{align*}
    To go from the second to the third line, we used $\G$-invariance of $f$, and to go from the fourth to the fifth line, we used \eqref{eqn: measure equivariance} and the cocycle property \eqref{eqn: mobius cocycle} that $\rho^k(\gamma\inverse, \gamma t) = \rho^k(\gamma, t)\inverse$.
\end{proof}
We remark that, as in the classical case, the function $P[f]$ is also leafwise harmonic, but this won't play a role in our arguments.
Consider now
\begin{equation}\label{e.F}
\begin{array}{cccl}
\Phi:&\HH^3\times\deG&\to&\R\\
&(x,t)&\mapsto&\int_{\partial\mb{H}^3}{\chi_{\Lambda_t}(\theta) ~d\nu_x(\theta).}
\end{array}
\end{equation}
where
\[
\chi_{\Lambda_t}(\theta)=\left\{
\begin{array}{l l}
1 &\text{\rm if $\theta\in\Lambda_t$},\\
0 &\text{\rm if $\theta\not\in\Lambda_t$}\\
\end{array}
\right.
\]
is the characteristic function of the limit set $\Lambda_t=\xi_t^k(\deG)\subset \CP\cong\P(t^{k+1}/t^{k-1})$.

The following proposition summarizes the properties of $\Phi$ established above.
\begin{prop}\label{p.Kernel} Let $\rho:\G\to\PSL(d,\C)$ be $k$-hyperconvex. The function $\Phi$ from Equation \eqref{e.F} satisfies: 
\begin{enumerate}
\item{For every point $\zeta\in\partial\mb{H}^3$ of density for $\Lambda_t$ or $\Lambda_t^c$ and for every sequence $x_n\in\mb{H}^3$ converging to $\zeta$ conically 
we have $\lim_{n\to \infty}\Phi(x_n,t)=\chi_{\Lambda_t}(\zeta)$.}
\item{We have 
\[
\Phi\circ \gamma = \Phi, ~ \forall \gamma \in \G.
\]
}
\end{enumerate}
\end{prop}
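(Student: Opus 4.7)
The plan is to recognize $\Phi$ as the Poisson-type extension $P[f]$ produced by the preceding lemma, applied to
\[
f : \partial \H^3 \times \deG \to \R, \qquad f(\theta, t) := \chi_{\Lambda_t}(\theta),
\]
so that both conclusions of the proposition fall out immediately from that lemma once its two hypotheses are verified for this particular $f$.

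The first hypothesis, that $f(\cdot, t) \in L^1(\partial \H^3)$ for every $t\in\deG$, is immediate: $\partial \H^3 \cong \CP$ is compact and $f(\cdot, t)$ is bounded and Borel measurable. The second hypothesis, the $\Gamma$-invariance $f(\gamma.(\theta,t)) = f(\theta,t)$, amounts to the statement that the $\Gamma$-action on $\CP \times \deG$ given by \eqref{eqn: conformal bundle action} preserves the foliated limit set as a subset. This is built into the construction of $\mathcal B^k_\rho$ in Section \ref{sec:bundle}: the action covers the topological $\Gamma$-action on $\deG$, and the fiberwise projective map $\rho^k(\gamma, t)$ sends $\Lambda_t^k$ bijectively onto $\Lambda_{\gamma t}^k$. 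Transported through the trivialization $L$, this is precisely $\rho^k(\gamma, t)(\Lambda_t) = \Lambda_{\gamma t}$, i.e.\ $f(\rho^k(\gamma, t)\theta, \gamma t) = f(\theta, t)$.

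With both hypotheses in place, part (2) of the proposition is verbatim the $\Gamma$-invariance conclusion of the lemma. For part (1), I would observe that a Lebesgue density point of the characteristic function $\chi_{\Lambda_t}$ is exactly a point $\zeta$ at which the Lebesgue density of $\Lambda_t$ exists and equals either $0$ or $1$; these are what the statement calls points of density for $\Lambda_t$ or for $\Lambda_t^c$. The conical convergence clause of the lemma, applied to $f(\cdot, t) = \chi_{\Lambda_t}$ at such a $\zeta$, then gives $\Phi(x_n, t) \to \chi_{\Lambda_t}(\zeta)$, as required.

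There is essentially no obstacle here: the preceding lemma was set up precisely with this application in mind. The only piece of genuine content is the $\Gamma$-invariance of the foliated limit set inside $\CP \times \deG$, and this is encoded in the definition of the cocycle $\rho^k$ from Section \ref{sec:bundle}.
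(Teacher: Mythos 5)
Your proposal is correct and is precisely the argument the paper intends: Proposition~\ref{p.Kernel} is stated as a summary of the preceding lemma applied to $f(\theta,t)=\chi_{\Lambda_t}(\theta)$, and the two hypotheses you check (integrability of a characteristic function on a compact fiber, and $\Gamma$-invariance via $\rho^k(\gamma,t)(\Lambda_t)=\Lambda_{\gamma t}$) are exactly the content the paper relies on from the construction in \S\ref{sec:bundle}. No further comment is needed.
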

We now have all the necessary ingredients to prove Theorem \ref{thm:Leb0}:
\begin{proof}[Proof of Theorem \ref{thm:Leb0}]
Define the projection $\pi:\partial\Gamma^{(3)}\to\mb{H}^3$ that sends the triple $(t,p,q)$ to the orthogonal projection of $\xi_t^k(p)$ to the geodesic $[\xi_t^k(t),\xi_t^k(q)]$. Note that as $\xi_t^k$ is injective and $t,p,q$ are pairwise distinct, the projection is well defined. Furthermore, it has the equivariant property $\pi(\gamma t,\gamma p,\gamma q)=\rho^k(\gamma,t)\pi(t,p,q)$.

Suppose that $\Lambda_t$ has positive Lebesgue measure for some $t\in\partial\Gamma$. Let $\xi_t^k(\theta)\in\Lambda_t$ be a point of density for the Lebesgue measure, i.e., a Lebesgue density point for $\chi_{\Lambda_t}$ with $\chi_{\Lambda_t}(\xi_t^k(\theta)) = 1$. Choose a sequence $(t,\theta_n,\theta)$ with $\theta_n\to\theta$. By the definition of $\pi$, the point $\pi(t,\theta_n,\theta)$ converges to $\xi_t^k(\theta)$ along the geodesic $[\xi_t^k(t),\xi_t^k(\theta)]$, in other words, the convergence is conical. In particular Proposition \ref{p.Kernel} (1) gives
\begin{equation}\label{e.Leb1}
\Phi(\pi(t,\theta_n,\theta),t)\to 1.
\end{equation}

However, as $\Gamma$ acts co-compactly on the space of distinct triples $\partial\Gamma^{(3)}$, we can find a compact fundamental domain $K\subset\partial\Gamma^{(3)}$ and an infinite sequence of elements $\gamma_n$ such that $(t,\theta_n,\theta)\in\gamma_nK$.
Thus, by Property (2), we also have
\begin{align*}
\Phi(\pi(t,\theta_n,\theta),t) &=\Phi({\rho^k}(\gamma_n^{-1}, t)\pi(t,\theta_n,\theta),\gamma_n^{-1}t)\\
&=\Phi(\pi(\gamma_n^{-1}t,\gamma_n^{-1}\theta_n,\gamma_n^{-1}\theta),\gamma_n^{-1}t)\\
 &=\int_{\Lambda_{\gamma_n^{-1}t}} d\nu_{\pi(\gamma_n^{-1}t,\gamma_n^{-1}\theta_n,\gamma_n^{-1}\theta)}.
\end{align*}

By our choice of the sequence $\gamma_n$, the point $(\gamma_n^{-1}t,\gamma_n^{-1}\theta_n,\gamma_n^{-1}\theta)$ belongs to the compact set $K$. Hence, by continuity of $\pi$, the projection $\pi(\gamma_n^{-1}t,\gamma_n^{-1}\theta_n,\gamma_n^{-1}\theta)$ lies in the compact set $\pi(K)\subset\mb{H}^3$. 
In particular we can assume up to extracting a subsequence that $\gamma_n^{-1}t$ converges to a point $s\in\partial\Gamma$, and that the (proper) closed sets $\Lambda_{\gamma_n^{-1}t}$ converge in the Hausdorff topology to the (proper) closed set $\Lambda_s$ (recall Lemma \ref{l.conti}). In particular, as $\Lambda_s$ is not equal to $\mb{CP}^1$, we can find a small disk $D\subset\mb{CP}^1$ entirely contained in the complement of all $\Lambda_{\gamma_n^{-1}t}$ with $n$ sufficiently large. Therefore, we get
\[
\int_{\Lambda_{\gamma_n^{-1}t}}d\nu_{\pi(\gamma_n^{-1}t,\gamma_n^{-1}\theta_n,\gamma_n^{-1}\theta)}
<\sup_{x\in\pi(K)}\int_{\mb{CP}^1\setminus D}d\nu_x<1
\]
where in the last inequality we used Property (2). Thus, we reached a contradiction with Equation \eqref{e.Leb1}. This finishes the proof of the theorem.
\end{proof}

\section{Groups admitting hyperconvex representations}

As a consequence of Proposition \ref{prop:map xi}, we see that if the hyperbolic group $\Gamma$ admits a $k$-hyperconvex representation, then its boundary $\partial\Gamma$ embeds in the 2-sphere $\mb{CP}^1=\mb{P}(z^{k+1}/z^k)$ for any $z\in\partial\Gamma$ via the map $\xi_z^k$. This strongly restricts the class of groups that can admit such representations. In this section we prove Theorem \ref{thmINTRO:Kleinan} which we recall:

\begin{thm}\label{t.kleinian}
Let $\rho:\Gamma\to{\rm PSL}(d,\mb{C})$ be $k$-hyperconvex. Then $\Gamma$ is virtually isomorphic to a convex-cocompact Kleinian group.
\end{thm}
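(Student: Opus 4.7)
The plan is to apply Sullivan's rigidity theorem \cite{Sul81}: any uniformly quasi-conformal subgroup of $\Homeo(\CP)$ is quasi-conformally conjugate to a subgroup of $\PSL(2,\C)$. By Proposition \ref{prop:map xi}, the map $\xi_z^k: \deG \to \Lambda_z \subset \CP$ is a topological embedding for any fixed $z\in\deG$, and it conjugates the $\Gamma$-action on $\deG$ to the homeomorphism action $f_\gamma := \xi_z^k \circ \gamma\circ (\xi_z^k)^{-1}$ on $\Lambda_z$. The central observation is the factorization
\[f_\gamma \;=\; \bigl(\xi_z^k \circ (\xi_{\gamma z}^k)^{-1}\bigr) \circ \rho^k(\gamma, z),\]
where the second factor is a Möbius transformation $\Lambda_z\to\Lambda_{\gamma z}$ coming from the bundle cocycle \eqref{eqn: mobius cocycle}, hence $1$-quasi-conformal, and the first factor is a change-of-basepoint map that must be controlled using $k$-hyperconvexity.

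Since $\deG$ embeds in $\CP$, I would use Ha\"issinsky's reduction \cite{Ha15} to split into three cases: $\deG$ is a circle, a $2$-sphere, or a Sierpinski carpet. The circle case reduces to classical convex-cocompact Fuchsian theory. In the sphere case, $\xi_z^k$ is a global homeomorphism $\deG \to \CP$. Using the trivialization $L$ of $\mathcal B_\rho^k$ from \S\ref{sec:bundle}, the maps $\phi_t := L_t \circ \xi_t^k : \deG \to \CP$ form a continuous family of homeomorphisms (Lemma \ref{lem: double boundary continuous into bundle}) that all send the chosen triple $x,y,z$ to $0,1,\infty$, and the conjugated $\Gamma$-action reads $f_\gamma = (\phi_z \circ \phi_{\gamma z}^{-1}) \circ M_\gamma$ with $M_\gamma \in \PSL(2,\C)$. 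The task becomes showing that $\phi_z \circ \phi_t^{-1}: \CP \to \CP$ is quasi-conformal with dilatation bounded independently of $t \in \deG$; compactness of $\deG$ together with continuity of the family $(\phi_t)$ should reduce this to an infinitesimal estimate driven by $k$-hyperconvexity. Sullivan's theorem then furnishes the desired conjugation.

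For the Sierpinski carpet case, the Kapovich--Kleiner structure theory identifies the peripheral circles $\{C_i\} \subset \Lambda_z$ with stabilizers $\Gamma_i<\Gamma$ that are virtually surface groups. Each restriction $\rho|_{\Gamma_i}$ is still $k$-hyperconvex, and $\partial\Gamma_i$ is a circle, so the circle case applied to $\Gamma_i$ yields a uniformly quasi-symmetric action of $\Gamma_i$ on $C_i$. Markovic's extension theorem \cite{Markovic} then extends each such boundary action to a uniformly quasi-conformal action on the quasi-disk $D_i$ bounded by $C_i$. Gluing these extensions along $\Lambda_z$ produces a $\Gamma$-equivariant homeomorphism of $\CP$, which by construction is uniformly quasi-conformal. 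Sullivan's theorem applies once more.

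The main obstacle will be the uniformity of the quasi-conformal data. In the sphere case one must upgrade continuity of $(\phi_t)$ to a uniform dilatation bound for $\phi_z \circ \phi_t^{-1}$; in the Sierpinski carpet case one must control the quasi-symmetric constants on the $C_i$ (and hence Markovic's extension constants on the $D_i$) uniformly in $i$, and verify that the extensions piece together continuously near the accumulation points of $\{C_i\}$. Both uniformity issues should be addressable by exploiting cocompactness of the $\Gamma$-action on $\deG^{(3)}$ together with the continuous variation of the foliated limit set provided by Lemmas \ref{lem: double boundary continuous into bundle} and \ref{l.conti}, but the careful bookkeeping of constants in the Sierpinski carpet case is the most delicate part of the argument.
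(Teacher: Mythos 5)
Your architecture coincides with the paper's: Sullivan's theorem, the factorization of $f_\gamma$ through a change-of-basepoint map, Markovic's extension in the carpet case, and Ha\"issinsky's criterion for the general reduction. But the two steps you defer as ``the main obstacle'' are where the proof actually lives, and the mechanisms you propose for them do not work as stated. In the sphere case, continuity of the family $(\phi_t)$ plus compactness of $\deG$ gives no control on dilatation: uniform convergence of homeomorphisms says nothing about quasi-conformality, and there is no ``infinitesimal estimate'' available since the boundary maps $\xi_t$ are merely continuous. The paper's mechanism is instead a global cross-ratio argument: $\Gamma$ acts cocompactly on the space of $6$-tuples $(a,b,c,d,x,y)$ with $|\mb B_x(a,b,c,d)|=1$ (this space fibers $\Gamma$-equivariantly over $\deG^{(3)}$ with compact fibers), and the continuous, $\Gamma$-invariant, nonvanishing function $|\mb B_y(a,b,c,d)|$ is therefore bounded away from $0$ and $\infty$ on the compact quotient; this yields that all the maps $\xi_y\circ\xi_x^{-1}$ are \emph{uniformly} quasi-M\"obius (Lemma \ref{lem: action on 6-tuples} and Proposition \ref{l.uqc}). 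You gesture at cocompactness on triples, but the configuration space must carry the cross-ratio normalization for the argument to close.

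In the carpet case there are further genuine gaps. The assertion that the glued map is ``by construction uniformly quasi-conformal'' is false as stated: a homeomorphism of $\CP$ that is $K$-quasi-conformal on each of countably many disjoint Jordan disks and $K$-quasi-M\"obius on the complementary carpet need not be globally quasi-conformal. One needs (a) that the curves $\xi_z(C)$ are \emph{uniform} quasicircles, which is itself a nontrivial statement proved via Ahlfors' criterion and another cocompactness argument (Proposition \ref{p.quasicircles}), and (b) a gluing theorem such as Bonk's \cite[Proposition 5.1]{Bonk}. Moreover, the quasi-symmetry needed on each peripheral circle is with respect to the uniformization of the complementary disk $D^C_z\subset\P(z^{k+1}/z^{k-1})$, uniformly over all peripheral circles and all $\gamma$ simultaneously; ``the circle case applied to $\Gamma_i$'' only produces an abstract Fuchsian structure on the stabilizer and says nothing about this embedded, uniform quasi-symmetry, which again requires a cocompactness argument on a configuration space (Proposition \ref{p.uqsym}). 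Extending each $f_\gamma$ disk by disk also does not automatically yield a homomorphism into $\Homeo(\CP)$: one must fix coset representatives and base extensions and verify the cocycle identity, as in Proposition \ref{p.extSier}. Finally, Ha\"issinsky does not reduce the general case to ``$\deG$ is a circle, sphere, or carpet''; his theorems reduce it to showing that every quasi-convex carpet \emph{subgroup} has Ahlfors-regular conformal dimension $<2$, which is what the carpet case supplies after restricting $\rho$ to such a subgroup.
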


As the proof uses 2-dimensional quasi-conformal analysis, before starting the argument, we briefly recall the relevant definitions and facts.

\subsection{Quasi-conformal and quasi-symmetric maps}
Quasi-conformal homeomorphisms between domains in $\CP$ are discussed in \cite{Ahlfors:lectures} as orientation preserving homeomorphisms that only distort the modulus of a quadrilateral by a bounded multiplicative factor.  The following (equivalent) definition characterizes quasi-conformal homeomorphisms of $\CP$ as those for which the image of a circle is contained in an annulus of bounded modulus. 
\begin{dfn}[Quasi-conformal]
\label{dfn:quasi-conformal}
An orientation preserving homeomorphism $f:\mb{CP}^1\to\mb{CP}^1$ (resp. $f:\mb{H}^2\to\mb{H}^2$) is $K$-quasi-conformal for some constant $K\ge 1$ if in all affine charts it satisfies the following 
\[
\limsup_{r\to 0}{\frac{\sup_{|z-w|=r}{|f(z)-f(w)|}}{\inf_{|z-w|=r}{|f(z)-f(w)|}}}\le K
\]
for every $z$ in the affine chart.

The previous definition is easily seen to be equivalent to the following in terms of cross-ratios: An orientation preserving homeomorphism $f:\mb{CP}^1\to\mb{CP}^1$ is $K$-quasi-conformal if for every $a,b,c,d\in\mb{CP}^1$ with $|\mb{B}(a,b,c,d)|=1$ we have 
\begin{equation}\label{eqn: bounded cross ratio}
    \frac{1}{K}\le|\mb{B}(f(a),f(b),f(c),f(d))|\le K.
\end{equation}

Here, we denote by $\mb B$ the projective cross-ratio, normalized so that for $(z_1,z_2,z_3,z_4)$
\begin{equation}\label{e.cr}\mb B(z_1,z_2,z_3,z_4)=\frac{(z_3-z_1)(z_4-z_2)}{(z_2-z_1)(z_4-z_3)}.
\end{equation}
Equivalently it holds $\mb B(0,1,z,\infty)=z$.
\end{dfn}

\begin{dfn}[Quasi-M\"obius]\label{def: quasi-mobius}
    Let $C\subset \CP$ be a set.
    A map $f: C \to \CP$ is $K$-quasi-M\"obius for some $K\ge 1$ if for all $a, b, c, d \in C$ with $|\mb B(a,b,c,d)| = 1$, \eqref{eqn: bounded cross ratio} holds.
\end{dfn}

In particular, the restriction of a $K$-quasi-conformal homeomorphism $f: \CP\to \CP$ to any subset $C\subset\CP$ is $K$-quasi-M\"obius.

{Every $K$-quasi-conformal map $f:\mb{H}^2\to\mb{H}^2$ extends to $\partial\mb{H}^2=\mb{RP}^1$ and the restriction to the boundary is a so-called quasi-symmetric homeomorphism.

\begin{dfn}[Quasi-symmetric]
\label{dfn:quasi-symmetric}
An orientation preserving homeomorphism $f:\mb{RP}^1\to\mb{RP}^1$ is $K$-quasi-symmetric for some constant $K\ge 1$ if for all 4-tuples of distinct points $z_1,z_2,z_3,z_4\in\mb{RP}^1$ with $\mb{B}(z_1,z_2,z_3,z_4)=-1$ we have
\[
\frac{1}{K}\le\mb{B}(f(z_1),f(z_2),f(z_3),f(z_4))\le K.
\]
\end{dfn}

Conversely, by a result of Ahlfors-Beurling \cite{AB:qcextension}, every quasi-symmetric homeomorphisms of $\dH$ can be continuously extended to quasi-conformal homeomorphisms of $\bH^2$:
\begin{thm}[{Ahlfors-Beurling \cite{AB:qcextension}}]\label{thm:DEextension}
For every $K$ there exists $K'$ such that every $K$-quasi-symmetric homeomorphism of $\mb{RP}^1$ is the boundary restriction of a $K'$-quasi-conformal homeomorphism of $\mb{H}^2$.
\end{thm}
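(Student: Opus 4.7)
The plan is to construct $F$ explicitly via the Ahlfors-Beurling averaging formula and then verify that its quasi-conformal dilatation depends only on $K$.

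After conjugating by Möbius transformations of $\mb{H}^2$ on the domain and target, I may assume that $f$ fixes $\infty \in \mb{RP}^1$, so that in an affine chart $f : \mb{R} \to \mb{R}$ is an orientation-preserving homeomorphism.  The cross-ratio $K$-quasi-symmetry condition of Definition \ref{dfn:quasi-symmetric} is equivalent, after a standard algebraic reformulation on harmonic quadruples of the form $(x, x+t, x-t, \infty)$, to the classical Beurling-Ahlfors multiplicative symmetry condition: there exists $\lambda = \lambda(K) \geq 1$ such that
$$\frac{1}{\lambda} \leq \frac{f(x+t) - f(x)}{f(x) - f(x-t)} \leq \lambda$$
for every $x \in \mb{R}$ and $t > 0$.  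In the upper half-plane model, define $F: \mb{H}^2 \to \mb{C}$ by
$$F(x+iy) := \frac{1}{2}\int_0^1 \left[f(x+ty) + f(x-ty)\right] dt + \frac{i}{2}\int_0^1 \left[f(x+ty) - f(x-ty)\right] dt.$$
Monotonicity of $f$ forces the imaginary part of $F$ to be strictly positive on $\mb{H}^2$, and continuity of $f$ makes $F$ extend continuously to $\mb{R}$ with boundary values $f$.

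Differentiating under the integral sign shows that $F$ is $C^1$, and the lower bound $\lambda^{-1}$ in the multiplicative symmetry condition implies that the Jacobian of $F$ is strictly positive at every interior point.  Hence $F$ is an orientation-preserving $C^1$ local diffeomorphism; a standard topological argument based on the boundary behavior then promotes this to a global homeomorphism of $\mb{H}^2$.

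The main obstacle is bounding the Beltrami coefficient $\mu_F := \partial_{\bar z} F / \partial_z F$ uniformly on $\mb{H}^2$.  A direct computation expresses the Wirtinger derivatives of $F$ as linear combinations of the endpoint values $f(x \pm y)$ and of the averages $\int_0^1 f(x \pm ty) \, dt$, which reduces the pointwise quantity $|\mu_F|(x,y)$ to a universal continuous function of the symmetric ratios $(f(x+y)-f(x))/(f(x)-f(x-y))$ along the vertical slice through $x$.  The multiplicative symmetry condition controls these ratios by $\lambda$, yielding $|\mu_F| \leq \kappa(K) < 1$ uniformly on $\mb{H}^2$; one then takes $K' := (1+\kappa)/(1-\kappa)$ as the desired quasi-conformal constant.
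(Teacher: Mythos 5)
The paper does not prove this statement at all: it is imported as a classical theorem of Beurling--Ahlfors, so there is no internal argument to compare against. Your proposal reconstructs the original Beurling--Ahlfors proof via the averaging extension, and the outline is essentially correct: the normalization fixing $\infty$ is legitimate because the cross-ratio condition and the quasi-conformal dilatation are both M\"obius-invariant; restricting the harmonic quadruples of Definition \ref{dfn:quasi-symmetric} to those of the form $(x,x+t,x-t,\infty)$ does give exactly the multiplicative condition $\lambda^{-1}\le (f(x+t)-f(x))/(f(x)-f(x-t))\le\lambda$ with $\lambda=K$ (reading the definition with absolute values, as one must since an orientation-preserving map sends a separated quadruple to one with negative cross-ratio); and writing $F=\tfrac12(\alpha+\beta)+\tfrac{i}{2}(\alpha-\beta)$ with $\alpha(x,y)=\tfrac1y\int_x^{x+y}f$, $\beta(x,y)=\tfrac1y\int_{x-y}^x f$ one checks that $F$ is $C^1$ with Jacobian $\tfrac12(\beta_x\alpha_y-\alpha_x\beta_y)>0$ (in fact monotonicity of $f$ alone already gives this), and that $F$ extends continuously to the boundary with values $f$, whence properness and the covering-map argument give a global homeomorphism.

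The one place where your sketch understates the work is the final dilatation bound, which is the actual content of the theorem. The Wirtinger derivatives of $F$ involve not only $f(x\pm y)$ and $f(x)$ but also the averages $\alpha,\beta$, so $|\mu_F|(x,y)$ is \emph{not} a function of the single ratio $(f(x+y)-f(x))/(f(x)-f(x-y))$; one must also bound the normalized quantities $(\alpha-f(x))/(f(x+y)-f(x))$ and $(f(x)-\beta)/(f(x)-f(x-y))$ away from $0$ and $1$. That requires invoking quasi-symmetry at the sub-scales $ty$, e.g.\ $\alpha-f(x)\ge\tfrac12(f(x+y/2)-f(x))\ge\tfrac{1}{2(1+\lambda)}(f(x+y)-f(x))$, and then a genuinely delicate estimate (two pages in Beurling--Ahlfors, or Ahlfors' lectures) showing these four bounded ratios force $|\mu_F|\le\kappa(K)<1$; the naive bound obtained by estimating each partial derivative separately does not yield $\kappa<1$. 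So your argument is a faithful roadmap of the standard proof rather than a complete one; since the paper itself treats the statement as a black box, citing \cite{AB:qcextension} (or \cite{Ahlfors:lectures}) for this estimate is the appropriate resolution.
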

}

\subsection{Proof of Theorem \ref{t.kleinian}}
We will split the proof into several cases.

\subsubsection{Case 1: $\deG=\mb{CP}^1$}\label{ss.sphere}\

We consider, for every $t\in\deG$, the tangent projection 
$$\xi_t:=\xi^k_t:\partial\Gamma\to\P(t^{k+1}/t^{k-1})=\CP$$
defined in Section \ref{s.projmap}, and omit, for the rest of the section, $k$ from the notation for the sake of readability.
It follows from Proposition \ref{prop:map xi}  that, for every $t\in\partial\Gamma$, the map $\xi_t$ is a homeomorphism being a continuous bijection between compact Hausdorff spaces. 
 
 We fix $z\in\deG$ and use the marking $\xi_z:\deG\to\mathbb P(z^{k+1}/z^{k-1})$  to push forward the standard action of $\G$ on $\deG$ to an action $\rho_z$ of $\G$ on $\CP=\mathbb P(z^{k+1}/z^{k-1})$. More precisely the action $\rho_z$ is  given by 
 \begin{equation}\label{e.rhot}\rho_z(\g)=\xi_z\circ\g\circ(\xi_z)^{-1}.
 \end{equation}
We will show that such action is by uniformly quasi-conformal homeomorphisms. This means that there exits a constant $K$ such that for all $\g\in\G$, $\rho_z(\g)$ is $K$-quasi-conformal. By the following classical result of Sullivan \cite{Sul81}, this implies that $\rho_z(\Gamma)$ is (quasi-conformally) conjugate to a co-compact Kleinian group.
\begin{thm}[Sullivan, see e.g. {\cite[Theorem 23.3]{DrutuKapovich}}]\label{t.Sullivan}
A uniformly quasi-conformal subgroup of $\Homeo(\CP)$ is quasi-conformally conjugated to a subgroup of $\PSL_2(\C)$.
\end{thm}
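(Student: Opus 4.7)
The plan is to exhibit a $G$-invariant measurable conformal structure on $\CP$ and invoke the measurable Riemann mapping theorem of Morrey--Ahlfors--Bers to straighten it by a quasi-conformal homeomorphism $\phi$. After conjugation by $\phi$, the group $G$ preserves the standard conformal structure and hence consists of Möbius transformations.

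A measurable conformal structure on $\CP$ is equivalent to a Beltrami differential $\mu \in L^\infty(\CP)$ with $\|\mu\|_\infty < 1$. Equivalently, at almost every point $p$ one selects a point $\sigma(p)$ in the space of conformal structures on $T_p\CP$, which carries a canonical hyperbolic metric making it isometric to $\HH^2$ and pointed at the Euclidean structure $\sigma_0(p)$; the two encodings are related by $|\mu(p)| = \tanh\bigl(d_{\HH^2}(\sigma(p),\sigma_0(p))/2\bigr)$. For a $K$-quasi-conformal homeomorphism $g$, the distributional pullback $g^*\sigma_0$ satisfies $d_{\HH^2}(g^*\sigma_0(p), \sigma_0(p)) \le \log K$ almost everywhere, since the dilatation is exactly this hyperbolic displacement.

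Given a uniformly $K$-quasi-conformal subgroup $G \subset \Homeo(\CP)$, I would define $\sigma(p)$ to be the hyperbolic circumcenter --- the center of the smallest enclosing disc --- of the orbit $\{g^*\sigma_0(p) : g\in G\}$ in $\HH^2_p$. Uniform $K$-quasi-conformality forces this orbit into a hyperbolic ball of radius $\log K$, so the circumcenter is well defined and lies within distance $\log K$ of $\sigma_0(p)$. The circumcenter is equivariant under isometries of $\HH^2$, so $\sigma$ is automatically $G$-invariant, and the associated Beltrami differential $\mu$ satisfies $\|\mu\|_\infty \le (K-1)/(K+1) < 1$. The main technical obstacle is the measurability of $p \mapsto \sigma(p)$: this follows because each individual section $p \mapsto g^*\sigma_0(p)$ is measurable, and the hyperbolic circumcenter of a bounded set depends continuously on the set in the Hausdorff topology, so $\sigma$ can be exhibited pointwise as a limit of measurable functions associated to a countable exhaustion of $G$ by finite subsets.

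With $\mu$ in hand, the measurable Riemann mapping theorem produces a quasi-conformal homeomorphism $\phi : \CP \to \CP$ whose Beltrami coefficient equals $\mu$. For each $g\in G$, the conjugate $\phi \circ g \circ \phi^{-1}$ is quasi-conformal with trivial Beltrami coefficient almost everywhere, hence holomorphic by Weyl's lemma, and as an orientation-preserving self-homeomorphism of $\CP$ it must be a Möbius transformation. Thus $\phi G \phi^{-1} \subset \PSL_2(\C)$, as claimed. The delicate points --- measurability of the circumcenter selection and the strict bound $\|\mu\|_\infty < 1$ --- are both forced by the uniform distortion hypothesis on $G$.
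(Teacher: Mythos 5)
This statement is not proved in the paper at all: it is quoted as a black box from Sullivan (via Drutu--Kapovich), so there is no internal proof to compare against. Your argument is precisely the standard Sullivan--Tukia proof that those references give: encode conformal structures on each tangent plane as points of $\HH^2$, note that uniform $K$-quasi-conformality confines the orbit $\{g^*\sigma_0(p)\}$ to a ball of radius $\log K$, take the (unique, isometry-equivariant) circumcenter to get an invariant Beltrami coefficient with $\|\mu\|_\infty\le (K-1)/(K+1)$, straighten it by the measurable Riemann mapping theorem, and conclude that the conjugated elements are $1$-quasi-conformal, hence Möbius. The computations you record (the relation $|\mu|=\tanh(d_{\HH^2}/2)$, the bound on the circumcenter, the use of Weyl's lemma) are all correct.

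One caveat: your measurability argument, which realizes $\sigma$ as a limit of circumcenters over ``a countable exhaustion of $G$ by finite subsets,'' only makes sense when $G$ is countable, whereas the statement as phrased allows arbitrary subgroups of $\Homeo(\CP)$ (e.g.\ $\PSL_2(\C)$ itself). For uncountable $G$ one must work harder (this is handled in Tukia's and Sullivan's treatments, e.g.\ by a separability or measurable-selection argument), since the a.e.-defined pullbacks $g^*\sigma_0$ cannot simply be enumerated. This does not affect the paper, where the theorem is applied to actions of a finitely generated hyperbolic group, but as a proof of the general statement you should either add that reduction or state the countable case explicitly. You also implicitly use the a.e.\ chain rule $d(gh)_p=dg_{hp}\circ dh_p$ and the fact that quasi-conformal maps preserve null sets to get $G$-invariance of $\sigma$; these are standard but worth citing.
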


When $\rho:\G\to\PSL(d,\C)$ is $k$-hyperconvex, for every $t\in\deG$ we consider the cross-ratio $\mb B_t$ on $\deG^4$ given by
$$\mb B_t(a,b,c,d)=\mb B(\xi_t(a),\xi_t(b),\xi_t(c),\xi_t(d)).$$
\begin{lem}\label{lem: action on 6-tuples} Let $\rho:\G\to\PSL(d,\C)$ be $k$-hyperconvex.
The diagonal action of $\Gamma$ on the space
\[
\cal B:=\{(a,b,c,d,x,y)\in\partial\Gamma^6|\, (a,b,d)\in\partial\G^{(3)}, |\mb B_x(a,b,c,d)|=1\}.
\]
is properly discontinuous and co-compact.
\end{lem}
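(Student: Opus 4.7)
My plan is to reduce proper discontinuity and cocompactness to the classical fact that a Gromov hyperbolic group acts properly discontinuously and cocompactly on the space $\partial\Gamma^{(3)}$ of pairwise distinct triples, via the continuous $\Gamma$-equivariant forgetful map
\[\pi:\mathcal{B}\longrightarrow\partial\Gamma^{(3)},\qquad (a,b,c,d,x,y)\mapsto(a,b,d).\]

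The first step is to check that $\mathcal{B}$ is genuinely $\Gamma$-invariant, i.e.\ that the cross-ratio condition $|\mb B_x(a,b,c,d)|=1$ is preserved by the diagonal $\Gamma$-action. This hinges on the $\PSL(2,\C)$-valued cocycle from \eqref{eqn: mobius cocycle}: by $\rho$-equivariance of the boundary maps, one has $\xi_{\gamma x}(\gamma w)=\rho^k(\gamma,x)\,\xi_x(w)$ for every $w\in\partial\Gamma$, and since $\rho^k(\gamma,x)$ is a Möbius transformation it preserves the projective cross-ratio. Consequently
\[\mb B_{\gamma x}(\gamma a,\gamma b,\gamma c,\gamma d)=\mb B_x(a,b,c,d)\]
for all $\gamma\in\Gamma$. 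In particular $\mathcal{B}$ is $\Gamma$-invariant and $\pi$ is $\Gamma$-equivariant.

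Proper discontinuity is then immediate: given a compact $C\subset\mathcal{B}$, the image $\pi(C)$ is compact in $\partial\Gamma^{(3)}$, so $\{\gamma\in\Gamma:\gamma C\cap C\neq\emptyset\}\subseteq\{\gamma:\gamma\pi(C)\cap\pi(C)\neq\emptyset\}$ is finite. For cocompactness, I fix a compact fundamental domain $K\subset\partial\Gamma^{(3)}$ for the $\Gamma$-action and set
\[\widetilde{\mathcal{B}}:=\{(a,b,c,d,x,y)\in\mathcal{B}:(a,b,d)\in K\}.\]
By the fundamental domain property combined with the invariance above, $\Gamma\cdot\widetilde{\mathcal{B}}=\mathcal{B}$, so it suffices to prove that $\widetilde{\mathcal{B}}$ is compact. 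Since $\partial\Gamma$ is compact Hausdorff, $\widetilde{\mathcal{B}}$ lies in the compact set $K\times\partial\Gamma^{3}\subset\partial\Gamma^6$, and is cut out by the condition $|\mb B_x(a,b,c,d)|=1$.

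The main technical point, and what I expect to be the only subtlety, is to verify that this condition is indeed closed on $K\times\partial\Gamma^3$. For this I would use Lemma \ref{lem: double boundary continuous into bundle} together with the trivialization $L$ of the $\CP$-bundle $\mathcal{B}_\rho^k$: in those coordinates $(x,w)\mapsto\xi_x(w)\in\CP$ is jointly continuous, while the injectivity of $\xi_x$ from Proposition \ref{prop:map xi} and the compactness of $K\subset\partial\Gamma^{(3)}$ ensure that $\xi_x(a),\xi_x(b),\xi_x(d)$ remain uniformly pairwise distinct on $K$. The projective cross-ratio is continuous wherever three of its four arguments are distinct, so $|\mb B_x|$ extends to a continuous $[0,\infty]$-valued function on $K\times\partial\Gamma^3$, and its preimage of $\{1\}$ is closed. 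This yields compactness of $\widetilde{\mathcal{B}}$ and completes the argument.
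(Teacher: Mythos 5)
Your proof is correct and follows essentially the same route as the paper's: establish $\Gamma$-invariance of $\mathcal B$ via the M\"obius cocycle $\rho^k(\gamma,x)$ preserving cross-ratios, then transport proper discontinuity and cocompactness from $\partial\Gamma^{(3)}$ through the equivariant forgetful projection $(a,b,c,d,x,y)\mapsto(a,b,d)$, whose preimages of compact sets are compact. The only difference is that you spell out the closedness of the cross-ratio condition to justify properness, a detail the paper compresses into the assertion that the fibers are compact.
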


\begin{proof}
Recall from \eqref{eqn: mobius cocycle} that for every $\g\in\G$ and $t\in\deG$, $\rho(\g)$ induces a projective map
$$\rho(\g,t):\mathbb P(t^{k+1}/t^{k-1})\to\mathbb P((\gamma t)^{k+1}/(\gamma t)^{k-1}),$$ 
so that for every $a\in\deG$, $\xi_{\gamma t}(\gamma a)=\rho(\g,t)\xi_{t}(a)$. Since the cross-ratio $\mb B$ is invariant under projective maps,  it follows  that for every $\g\in\G$ and every $(a,b,c,d,x)\in\deG^5$ with $(a,b,d)\in\G^{(3)}$,
$$\mb B_x(a,b,c,d)=\mb B_{\g x}(\g a,\g b,\g c,\gamma d)$$
and thus $\G$ acts on $\cal B$.

The projection
\[
\begin{array}{ccc}\cal B&\to &\partial\Gamma^{(3)}\\
(a,b,c,d,x,y)&\mapsto&(a,b,d).
\end{array}
\]
is $\Gamma$-equivariant and has fibers homeomorphic to $\mb S^1\times\partial\Gamma\times\partial\Gamma$, thus in particular compact. As the action $\Gamma\curvearrowright\partial\Gamma^{(3)}$ is properly discontinuous and co-compact, the same is true for the action of $\Gamma$ on $\mc{B}$.
\end{proof}

\begin{prop}\label{l.uqc} Let $\rho:\G\to\PSL(d,\C)$ be $k$-hyperconvex. There exists a $K$ such that, for every  $x,y
\in\deG$, the homeomorphism 
\[
\xi_y\circ\xi_x^{-1}: \xi_x(\deG) \to \xi_y(\deG)
\]
is $K$-quasi-M\"obius.
\end{prop}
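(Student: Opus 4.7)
The plan is to upgrade the cocompactness in Lemma \ref{lem: action on 6-tuples} to a uniform cross-ratio bound by promoting $|\mb B_y(a,b,c,d)|$ to a continuous, $\Gamma$-invariant, strictly positive function on the cocompact $\Gamma$-space $\cal B$. Concretely, I would define
\[
f : \cal B \to (0,\infty), \qquad f(a,b,c,d,x,y) := |\mb B_y(a,b,c,d)|,
\]
and verify three properties: well-definedness/continuity, $\Gamma$-invariance, and that compactness of $\cal B/\Gamma$ yields the bound.

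For well-definedness, note that on $\cal B$ the triple $(a,b,d)$ is distinct and $\xi_y$ is injective by Proposition \ref{prop:map xi}, so the formula \eqref{e.cr} applied to $(\xi_y(a), \xi_y(b), \xi_y(c), \xi_y(d))$ is defined as long as $\xi_y(c) \neq \xi_y(d)$, i.e.\ $c \neq d$. The normalization $|\mb B_x(a,b,c,d)| = 1$ rules out $c = a$ (which would give cross-ratio $0$) and $c = d$ (which would give cross-ratio $\infty$), so by injectivity of $\xi_x$ we have $c \neq a, d$, and hence $\mb B_y(a,b,c,d) \in \C\setminus\{0\}$. The only edge case is $c = b$, where both cross-ratios equal $1$. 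Continuity of $f$ on $\cal B$ follows from continuity of $\xi_t$ jointly in $t$ (Lemma \ref{lem: double boundary continuous into bundle}).

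For $\Gamma$-invariance, the cocycle \eqref{eqn: mobius cocycle} provides projective isomorphisms $\rho^k(\gamma, t) : \P(t^{k+1}/t^{k-1}) \to \P((\gamma t)^{k+1}/(\gamma t)^{k-1})$ sending $\xi_t(a)$ to $\xi_{\gamma t}(\gamma a)$; since projective maps preserve cross-ratios,
\[
\mb B_{\gamma y}(\gamma a, \gamma b, \gamma c, \gamma d) = \mb B_y(a,b,c,d),
\]
which is the same identity used in the proof of Lemma \ref{lem: action on 6-tuples}. Thus $f$ descends to a continuous function on the compact Hausdorff quotient $\cal B/\Gamma$, and attains a finite maximum $K_1$ and a strictly positive minimum $K_2$. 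Setting $K := \max(K_1, 1/K_2)$ gives $1/K \le |\mb B_y(a,b,c,d)| \le K$ on $\cal B$. Translating via $a' = \xi_x(a)$ etc., this is precisely the $K$-quasi-Möbius condition of Definition \ref{def: quasi-mobius} for $\xi_y \circ \xi_x^{-1}: \xi_x(\deG) \to \xi_y(\deG)$.

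The main obstacle is essentially already cleared by Lemma \ref{lem: action on 6-tuples}; the remaining delicate point is the verification that $f$ is strictly positive and finite on all of $\cal B$, which requires ruling out the degenerate configurations $c = a$ and $c = d$ from the cross-ratio normalization, and noting that the only permitted coincidence $c = b$ yields $f \equiv 1$. No further quasi-conformal machinery enters at this step.
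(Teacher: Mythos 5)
Your proposal is correct and follows essentially the same route as the paper: the same function $f(a,b,c,d,x,y)=|\mb B_y(a,b,c,d)|$ on the space $\cal B$ of Lemma \ref{lem: action on 6-tuples}, the same $\Gamma$-invariance via the projective cocycle, and the same compactness argument to bound $f$ away from $0$ and $\infty$. Your extra care in ruling out the degenerate coincidences $c=a$ and $c=d$ via the normalization $|\mb B_x(a,b,c,d)|=1$ is a welcome elaboration of a point the paper states more briefly.
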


\begin{proof}
Let $\cal B$ be the set considered in Lemma \ref{lem: action on 6-tuples}. We consider the continuous function 
\[
\begin{array}{cccc}f:&\cal B&\to &\R\\
&(a,b,c,d,x,y)&\mapsto&|\mb B_y(a,b,c,d)|.
\end{array}
\]
It follows from the same arguments as in the proof of Lemma \ref{lem: action on 6-tuples} that $f$ is $\G$-invariant and thus descends to a continuous function on the compact space $\cal B/\G$. Since, by definition, for any $(a,b,c,d,x,y)\in\cal B$, $c$ is  distinct from $a$ and $b$ is distinct from $d$, the function $f$ never vanishes, and thus, by compactness of $\cal B/\G$, it is uniformly bounded away from zero and infinity. This implies that the homeomorphisms $\xi_y\circ \xi_x^{-1}$ are uniformly quasi-M\"obius, and concludes the proof.
\end{proof}
Note that for $\deG = \CP$, Proposition \ref{l.uqc} states that 
\[
\xi_y\circ\xi_x^{-1}:\mb{P}(x^{k+1}/x^{k-1})\to\mb{P}(y^{k+1}/y^{k-1})
\]
is a $K$-quasi-conformal homeomorphism.

\begin{prop}\label{p.uqc1} Suppose $\deG = \CP$.
The action $\rho_z : \G \to \Homeo(\mb{P}(z^{k+1}/z^{k-1}))$ defined in Equation \eqref{e.rhot} is uniformly quasi-conformal.
\end{prop}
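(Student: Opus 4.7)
The plan is to exhibit $\rho_z(\gamma)$ as a composition of two maps, each uniformly controlled: the $\PSL(2,\C)$-valued cocycle element $\rho^k(\gamma,z)$ from Equation \eqref{eqn: mobius cocycle}, which is a M\"obius transformation, and the change-of-marking homeomorphism $\xi_z\circ\xi_{\gamma z}^{-1}$ of $\CP$, whose uniform quasi-conformality will follow from Proposition \ref{l.uqc}.

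More precisely, starting from the definition $\rho_z(\gamma)=\xi_z\circ\gamma\circ\xi_z^{-1}$, I would fix an arbitrary point $p\in\P(z^{k+1}/z^{k-1})$ and write $w=\xi_z^{-1}(p)\in\deG$. The equivariance relation $\xi_{\gamma z}(\gamma w)=\rho^k(\gamma,z)\xi_z(w)$, which is immediate from the construction of the $\CP$-bundle $\mathcal B_\rho^k$ in Section \ref{sec:bundle}, then yields
\[
\rho_z(\gamma)(p)\;=\;\xi_z(\gamma w)\;=\;\bigl(\xi_z\circ\xi_{\gamma z}^{-1}\bigr)\!\left(\rho^k(\gamma,z)\,p\right),
\]
so that
\[
\rho_z(\gamma)\;=\;\bigl(\xi_z\circ\xi_{\gamma z}^{-1}\bigr)\circ\rho^k(\gamma,z).
\]

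The second factor $\rho^k(\gamma,z)$ is projective, hence $1$-quasi-conformal. For the first factor I would invoke Proposition \ref{l.uqc}, which provides a constant $K$, independent of the pair of basepoints, such that $\xi_z\circ\xi_{\gamma z}^{-1}$ is $K$-quasi-M\"obius on $\xi_{\gamma z}(\deG)$. Under the standing assumption $\deG=\CP$, the map $\xi_{\gamma z}$ is a homeomorphism of $\CP$ (as noted right after Equation \eqref{e.rhot}), so $\xi_z\circ\xi_{\gamma z}^{-1}$ is an orientation-preserving self-homeomorphism of $\CP$ whose cross-ratio distortion is bounded by $K$ on all of $\CP$. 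This is precisely the cross-ratio reformulation of $K$-quasi-conformality recalled in Definition \ref{dfn:quasi-conformal}.

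Composing a $K$-quasi-conformal homeomorphism with a M\"obius transformation yields a $K$-quasi-conformal homeomorphism, so $\rho_z(\gamma)$ is $K$-quasi-conformal with $K$ independent of $\gamma$, which is exactly the required uniform quasi-conformality. The only non-routine input is Proposition \ref{l.uqc}; everything else is bookkeeping with the cocycle identity. Once this proposition is in hand, Sullivan's Theorem \ref{t.Sullivan} applies to $\rho_z(\Gamma)$, completing Case 1 of Theorem \ref{t.kleinian}.
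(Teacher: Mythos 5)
Your proof is correct and follows essentially the same route as the paper: factor $\rho_z(\gamma)$ via the cocycle relation into a projective (hence conformal) map and a change-of-marking homeomorphism, then invoke Proposition \ref{l.uqc} for a uniform constant. The only cosmetic difference is the order of the factors --- the paper writes $\rho_z(\gamma)=\rho(\gamma,\gamma^{-1}z)\circ(\xi_{\gamma^{-1}z}\circ\xi_z^{-1})$ while you write $\rho_z(\gamma)=(\xi_z\circ\xi_{\gamma z}^{-1})\circ\rho^k(\gamma,z)$ --- which makes no difference to the argument.
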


\begin{proof}
For every $\gamma\in\Gamma$ and $x\in\deG$ we have 
\begin{align*}
\rho_z(\gamma)\xi_z(x) &=\xi_z(\gamma x)\\
&=\rho(\gamma,\g^{-1}z)\xi_{\gamma^{-1}z}(x)\\
&=\rho(\gamma,\g^{-1}z)\circ(\xi_{\gamma^{-1}z}\circ\xi_z^{-1})\circ\xi_z(x).
\end{align*}
Since $\rho(\gamma,\g^{-1}z): \mb{P}((\g^{-1}z)^{k+1}/(\g^{-1}z)^{k-1})\to \mb{P}(z^{k+1}/z^{k-1})$ is conformal and by Proposition \ref{l.uqc} the homeomorphism $\xi_{\gamma^{-1}z}\circ\xi_z^{-1}$ is $K$-quasi-conformal for $K$ independent on $\g\in\G$ and $z\in\deG$, the conclusion follows.
\end{proof}

Thus we have proven that $\Gamma$ admits a uniformly quasi-conformal action $\rho_z$ on $\mb{CP}^1$.  
Since the action $\rho_z$ is induced by the action of the hyperbolic group $\G$ on its boundary, the kernel of $\rho_z$ is finite and the action is discrete.
Thus, by Theorem \ref{t.Sullivan},  the image $\rho_z(\G)\subset\Homeo(\mb{P}(z^{k+1}/z^{k-1}))$ is quasi-conformally conjugated to a Kleinian group.
This shows that
$\Gamma$ is isomorphic to a finite extension of a Kleinian group and concludes the proof of Theorem \ref{t.kleinian} in the case of groups with sphere boundaries.
\qedhere

\subsubsection{Case 2: $\deG\subset\mb{CP}^1$ is a Sierpinski carpet or a circle}\label{ss.Sierpinski} 

Recall that a {\em Sierpinski carpet} is the complement in $\mb{CP}^1$ of a the interiors of a collection of pairwise disjoint disks. The boundaries of the disks provide a collection of disjoint Jordan curves in the carpet, the so-called {\em peripheral circles}. These can be topologically characterized as those Jordan curves that do not disconnect the carpet, and in particular they do not depend on the embedding. 

When the Sierpinski carpet is the boundary of a hyperbolic group we furthermore have:
\begin{thm}[{\cite[Theorem 8]{KK00}}]\label{t.kk}
Let $\G$ be a hyperbolic group with boundary homeomorphic to the Sierpinski carpet. Then:
\begin{enumerate}
\item{There are only finitely many $\Gamma$-orbits of peripheral circles.}
\item{The stabilizer ${\rm Stab}_\Gamma(C)$ of a peripheral circle $C\subset\partial\Gamma$ is a quasi-convex surface subgroup acting co-compactly on the set $C^{(3)}$ of distinct triples in $C$.} 
\end{enumerate} 
\end{thm}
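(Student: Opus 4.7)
The statement is the classical theorem of Kapovich-Kleiner, so my plan is to reconstruct the structural argument from the convergence group perspective. The two parts rest on the topological characterization of peripheral circles (as Jordan curves in the carpet whose complement remains connected) and on the fact that the action $\Gamma \curvearrowright \partial \Gamma^{(3)}$ is a uniform convergence action, hence properly discontinuous and cocompact. In particular, every self-homeomorphism of $\partial \Gamma$ induced by a $\gamma \in \Gamma$ must permute the peripheral circles.

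For (1), I would fix an embedding $\partial \Gamma \hookrightarrow \CP$ and a visual metric. Since the carpet is closed, the family of complementary disks $\{D_i\}$ has diameters tending to $0$; for each $\epsilon > 0$ only finitely many $D_i$ have diameter $\geq \epsilon$. I would then use the north-south contraction from the convergence property: for any sequence $\gamma_n \to \infty$ in $\Gamma$ there are attracting/repelling points $a, r \in \partial\Gamma$ such that $\gamma_n$ collapses the complement of any neighborhood of $r$ to a neighborhood of $a$. Applied to a peripheral circle $C$, this means $C$ can be moved so that its image has diameter bounded below by some uniform $\epsilon_0 > 0$ depending only on the topology; combined with finiteness of large disks, this bounds the number of orbits. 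Alternatively, one invokes the co-compactness of $\Gamma \curvearrowright \partial\Gamma^{(3)}$ applied to triples carried by each circle.

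For (2), the stabilizer $H = \Stab_\Gamma(C)$ acts on $C \cong S^1$ by orientation-preserving homeomorphisms (passing to index two if needed) and inherits a convergence action from the restriction of the $\Gamma$-convergence action on $\partial \Gamma$. The decisive step is to upgrade this to a \emph{uniform} convergence action, namely to show that $H \curvearrowright C^{(3)}$ is cocompact. Here part (1) intervenes: the subset $C^{(3)}$ is closed and $H$-invariant in $\partial\Gamma^{(3)}$, and two triples in $C^{(3)}$ lie in the same $\Gamma$-orbit if and only if they lie in the same $H$-orbit, since the peripheral circle through a triple is topologically determined. Hence the natural injection $H \backslash C^{(3)} \hookrightarrow \Gamma \backslash \partial \Gamma^{(3)}$ lands in a compact Hausdorff space, and one verifies that its image is closed. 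With uniformity established, the convergence group theorem of Tukia--Casson--Jungreis--Gabai identifies $H$ as virtually the fundamental group of a closed surface.

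Quasi-convexity then follows from the general fact that a subgroup of a hyperbolic group acting co-compactly on the triples of its limit set has conical limit set equal to that limit set, and subgroups with conical limit sets are quasi-convex. The main obstacle I anticipate is the verification of uniformity in part (2): bridging from the global cocompactness of $\Gamma$ on $\partial \Gamma^{(3)}$ down to the ``vertical'' fiber $C^{(3)}$ requires using the finite-orbit statement of part (1) together with the rigidity of peripheral circles under the $\Gamma$-action, and then invoking the deep $S^1$-convergence group theorem is essentially the heart of the matter.
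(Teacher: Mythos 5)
The paper does not prove this statement: Theorem \ref{t.kk} is imported verbatim from Kapovich--Kleiner \cite[Theorem 8]{KK00}, so there is no internal proof to compare against. Your reconstruction follows the standard argument of that paper and is essentially sound: the topological characterization of peripheral circles forces $\G$ to permute them, cocompactness of $\G\curvearrowright\deG^{(3)}$ restricted to the set of peripheral triples yields both the finiteness of orbits and the cocompactness of $\Stab_\G(C)\curvearrowright C^{(3)}$, and the Tukia--Casson--Jungreis--Gabai theorem together with the equivalence of quasi-convexity and dynamical quasi-convexity finishes the job. The one load-bearing point you leave implicit is the closedness of $\G\cdot C^{(3)}$ (equivalently, of the set of all peripheral triples) inside $\deG^{(3)}$: this is exactly where the null-family property of the complementary disks --- which you invoke only to count disks of diameter at least $\epsilon$ --- must actually be used. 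A sequence of triples on peripheral circles $C_n$ converging to a triple of pairwise distinct points forces $\mathrm{diam}(C_n)$ to stay bounded below, hence $C_n$ is eventually constant and the limit triple is again peripheral; and since distinct peripheral circles are disjoint, $C^{(3)}$ is clopen in its $\G$-saturation, which is what makes $H\backslash C^{(3)}$ homeomorphic to a closed subset of the compact Hausdorff quotient $\G\backslash\deG^{(3)}$. With that supplied, your part (2) is complete, and the north--south dynamics sketch in part (1) becomes dispensable, since the route via triples already gives finiteness of orbits.
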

In order to prove Theorem \ref{t.kleinian} for groups whose boundary is a Sierpinski carpet we will choose $z\in\deG$ and will extend the natural action $\rho_z:\G\to\Homeo(\xi_z(\deG))$, defined as in Equation \eqref{e.rhot}, to a uniformly quasi-conformal action $$\hat\rho_z:\G\to\Homeo (\mb{P}(z^{k+1}/z^{k-1})).$$

The first step of the proof is to show, in analogy to Proposition \ref{l.uqc}, that the restriction of $\rho_z$ to the image under $\xi_z$ of the peripheral circles is uniformly quasi-symmetric. More precisely, for every peripheral circle $C\subset\deG$ and every $x\in\deG$, we denote by  $D_x=D_x^C\subset \mb{P}(x^{k+1}/x^{k-1})$  the disk with boundary $\partial D_x^C=\xi_x(C)$. We choose a  uniformization $u_x^D:D_x\to\mb{H}^2$, and denote by 
$u_x^C:\xi_x(C)\to\RP=\partial\mb{H}^2$ its boundary restriction. Observe that $u_x^D$ and $u_x^C$ are well defined up to postcomposition by an element in $\PSL(2,\R)$.
{In the next three results we include the case in which the boundary is homemorphic to a circle, which we understand as a degenerate Sierpinski carpet, with a single peripheral circle equal to the whole set.} 

\begin{lemma}\label{l.calQ} The group $\Gamma$ acts properly discontinuously and co-compactly on the set
\[
\mc{Q}=\left\{(a,b,c,d,x,y)\in\partial\Gamma^6\left|\;
\begin{array}{l}
(a,b,c,d)\in C^{4}\text{ for some peripheral }C\subset\deG\\ 
(a,b,d)\in C^{(3)}\\ 

\mb{B}(u^C_x(\xi_x(a)),u^C_x(\xi_x(b)),u^C_x(\xi_x(c)),u^C_x(\xi_x(d)))=-1\\
\end{array}
\right.\right\}.
\]
\end{lemma}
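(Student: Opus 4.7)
The plan is to mimic the projection-with-compact-fibers argument used for Lemma \ref{lem: action on 6-tuples}: produce a $\Gamma$-equivariant continuous surjection from $\mathcal Q$ onto a space where cocompactness and proper discontinuity are already known, and verify that the fibers are compact. Concretely, I would consider
\[
\pi \colon \mathcal Q \longrightarrow \bigsqcup_{C} C^{(3)}, \qquad (a,b,c,d,x,y) \longmapsto (a,b,d),
\]
the disjoint union running over peripheral circles of $\deG$ (in the degenerate case $\deG\simeq S^1$, the single circle $C=\deG$). Theorem \ref{t.kk} supplies what I need on the target: there are finitely many $\Gamma$-orbits of peripheral circles, each stabilizer acts cocompactly on its triple space, and proper discontinuity is inherited from the $\Gamma$-action on $\deG^{(3)}$.

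To check $\Gamma$-equivariance of $\pi$, the substantive point is that the cross-ratio condition in the definition of $\mathcal Q$ is $\Gamma$-invariant. For $\gamma\in\Gamma$, the map $\rho(\gamma,x)$ from \eqref{eqn: mobius cocycle} is projective, hence restricts to a biholomorphism $D_x^C \to D_{\gamma x}^{\gamma C}$. Thus $u_{\gamma x}^{\gamma C}\circ \rho(\gamma,x)|_{D_x^C}$ is a Riemann uniformization of $D_x^C$, so it differs from $u_x^D$ by post-composition with an element of $\PSL(2,\R)$. Since cross-ratios on $\RP$ are $\PSL(2,\R)$-invariant, equivariance follows.

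For the compact fibers, I would fix $(a,b,d) \in C^{(3)}$ and normalize $u_x^C$ so that $\xi_x(a), \xi_x(b), \xi_x(d)$ are sent to $0, 1, \infty$. The cross-ratio condition then becomes $u_x^C(\xi_x(c))=-1$, which determines $c\in C$ uniquely from $x$. Continuity of $x\mapsto c(x)$ follows from Lemma \ref{lem: double boundary continuous into bundle} (the Jordan curves $\xi_x(C)$ and the marked points $\xi_x(a),\xi_x(b),\xi_x(d)$ vary continuously with $x$) combined with Carathéodory's kernel convergence theorem for normalized Riemann maps. Therefore $\pi^{-1}(a,b,d)$ is the graph of $x\mapsto c(x)$ times the free coordinate $y\in\deG$, hence homeomorphic to $\deG\times\deG$ and compact; combined with the previous paragraph this gives the lemma. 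The main technical subtlety is verifying that the open disks $D_x^C$ themselves vary continuously in the Carathéodory sense, not merely that their boundaries do; this reduces to the fact that $D_x^C$ is the unique component of $\CP\setminus \xi_x(\deG)$ whose boundary is $\xi_x(C)$ (in the carpet case) together with the continuity of $x\mapsto \xi_x$ guaranteed by Lemma \ref{lem: double boundary continuous into bundle}.
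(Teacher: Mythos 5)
Your argument is essentially the paper's own proof: the same equivariant projection $(a,b,c,d,x,y)\mapsto(a,b,d)$ onto $\bigsqcup_C C^{(3)}$ with compact fibers, and the same observation that $u^{\g C}_{\g x}\circ\rho(\g,x)\circ(u^C_x)^{-1}$ lies in $\PSL(2,\R)$ and hence preserves cross-ratios. Your additional discussion of Carath\'eodory kernel convergence to justify continuity of $x\mapsto c(x)$ is a correct elaboration of a point the paper leaves implicit, so the proposal is fine.
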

\begin{proof}
Observe first that $\G$ acts on the set $\mc Q$.  Indeed $\xi_{\gamma x}(\gamma a)=\rho(\g,x)\xi_{x}(a)$ for all $a\in\deG$ and $\gamma \in \Gamma$, and $\rho(\g,x)$ is conformal. 
We then have, for every $s\in\deG$
\begin{equation}\label{e.4.23}
\begin{array}{rl}
u^{\g C}_{\g x}(\xi_{\g x}(\g s))=&u^{\g C}_{\g x}(\rho(\g,x)\xi_{ x}( s))\\
=&(u^{\g C}_{\g x}\rho(\g,x)(u^C_{x})^{-1})\circ u^C_x(\xi_{ x}( s)).
\end{array}
\end{equation}
The composition $u^{\g C}_{\g x}\rho(\g,x)(u^C_{x})^{-1}$ is the extension to $\deH^2$ of the  map $$u^{\g D}_{\g x}\rho(\g,x)(u^D_{x})^{-1}:\HH^2\to\HH^2$$
which is holomorphic being composition of holomorphic maps, and thus belongs to $\PSL(2,\R)$. This implies that $u^{\g C}_{\g x}\rho(\g,x)(u^C_{x})^{-1}$ preserves the cross-ratio on $\deH$, which implies our first claim.

The continuous $\Gamma$-equivariant projection 
\[
\begin{array}{ccc}
     \mc{Q}&\to&\displaystyle{\bigsqcup_{C\text{ peripheral }}{C^{(3)}}}  \\
     (a,b,c,d,x,y)&\mapsto &(a,b,d). 
\end{array}
\]
is proper since its fibers are homeomorphic to the compact space $\partial\Gamma\times\partial\Gamma$. Since the quotient of $\bigsqcup_{C\text{ peripheral }}{C^{(3)}}$ by the action of $\Gamma$ is compact (Theorem \ref{t.kk}), the result follows.
\end{proof}

We denote by $\calC\subset\deG$ the (disjoint) union of all peripheral circles; clearly $\G$ acts on $\calC$. We fix $z\in\deG$ and endow every peripheral circle $C\subset \calC$ with the $\RP$-structure induced by the uniformization $u_z^C$.

\begin{prop}\label{p.quasicircles}
For every $z\in\partial\Gamma$ the Jordan curves in $\xi_z(\mc{C})$ are all uniform quasi-circles.
\end{prop}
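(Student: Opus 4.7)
To prove Proposition \ref{p.quasicircles}, my plan is to apply the same cross-ratio averaging strategy as in the proof of Proposition \ref{l.uqc}, but now measured against the $\CP$-valued projective cross-ratio in the fibers of the bundle $\mathcal B_\rho^k$. This will give uniform quasi-M\"obius parametrizations of the Jordan curves $\xi_z(C)$ by $\RP$, from which the quasi-circle property follows by classical quasi-conformal theory.

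First I would introduce the function
\[f\colon \mc{Q} \longrightarrow (0,+\infty), \qquad f(a,b,c,d,x,y) = \bigl|\mb{B}(\xi_y(a),\xi_y(b),\xi_y(c),\xi_y(d))\bigr|,\]
where $\mb{B}$ is the projective cross-ratio in the fiber $\mb{P}(y^{k+1}/y^{k-1})\cong\CP$. The normalization $\mb{B}(u_x^C(\xi_x(a_i)))=-1$ in the definition of $\mc{Q}$ forces $a,b,c,d$ to be pairwise distinct on the peripheral circle $C$, so injectivity of $\xi_y$ ensures $f$ is well-defined and strictly positive. As in the proof of Lemma \ref{l.calQ}, the cocycle identity $\xi_{\gamma y}(\gamma\,\cdot\,)=\rho^k(\gamma,y)\,\xi_y(\,\cdot\,)$ from \eqref{eqn: mobius cocycle} together with the fact that each $\rho^k(\gamma,y)$ is projective-linear implies that $f$ is $\G$-invariant. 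Since $\mc{Q}/\G$ is compact by Lemma \ref{l.calQ}, $f$ descends to a continuous strictly positive function on the quotient, hence is uniformly bounded: there exists $K\geq 1$ with $f(\mc{Q}) \subset [1/K, K]$.

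Specializing to $x=y=z$ for any fixed $z\in\deG$ and any peripheral circle $C\subset\deG$, this bound says exactly that the parametrization $(u_z^C)^{-1}\colon \RP \to \xi_z(C)\subset\mb{P}(z^{k+1}/z^{k-1})$ is $K$-quasi-M\"obius in the sense of Definition \ref{def: quasi-mobius}: on $\RP$, any distinct $4$-tuple with $|\mb{B}|=1$ is automatically harmonic (since the cross-ratio of four distinct real points is real and $\mb{B}=1$ is excluded), and our bound then forces the image $4$-tuple in $\xi_z(C)$ to satisfy $|\mb{B}|\in [1/K,K]$. Because $(u_z^C)^{-1}$ is the boundary value of the conformal Riemann map $(u_z^D)^{-1}\colon\HH^2 \to D_z^C$ and is uniformly quasi-M\"obius with constant independent of $z$ and $C$, the standard equivalence between quasi-M\"obius boundary parametrizations of Jordan curves and quasi-circles (via the Ahlfors three-point condition, cf.\ Tukia-V\"ais\"al\"a) produces a uniform $K''$-quasi-conformal extension $\CP\to\CP$ sending $\RP$ to $\xi_z(C)$. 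Hence each $\xi_z(C)$ is a $K''$-quasi-circle with $K''$ depending only on $K$.

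The main technical point is the final implication, where a cross-ratio bound on harmonic quadruples must be upgraded to a uniform quasi-conformal extension; this is handled by invoking the classical theory of quasi-M\"obius embeddings of $\RP$ into $\CP$. Everything before that step is essentially bookkeeping on the $\G$-invariant space $\mc{Q}$, whose precise form in Lemma \ref{l.calQ}, involving the peripheral uniformizations $u_x^C$, has been engineered so that the cross-ratio function $f$ is manifestly $\G$-invariant.
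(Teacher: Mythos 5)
Your argument is correct, but it reaches the conclusion by a genuinely different route than the paper. The paper's proof does not use the uniformizations $u_z^C$ at all: it introduces a separate compact $\Gamma$-space $\mc{R}$ of cyclically ordered quadruples $a<b<c\le d\le a$ on peripheral circles (the ordering guaranteeing $\xi_z(d)\neq\xi_z(b)$, hence finiteness of the relevant cross-ratio), bounds the continuous $\Gamma$-invariant function $|\mb{B}_z(a,c,b,d)|$ on $\mc{R}/\Gamma$ by cocompactness, and concludes directly from Ahlfors' cross-ratio criterion for quasi-circles. You instead recycle the space $\mc{Q}$ of Lemma \ref{l.calQ}, bound the fiberwise cross-ratio $|\mb{B}_y(a,b,c,d)|$ there, and specialize to $x=y=z$ to deduce that the boundary Riemann-map parametrizations $(u_z^C)^{-1}:\RP\to\xi_z(C)$ are uniformly quasi-M\"obius; the quasi-circle property then comes from the classical fact that a quasi-M\"obius embedding of $\RP$ into $\CP$ has quasi-circle image. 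Both are cocompactness arguments at heart, and your observation that for distinct real quadruples the conditions $|\mb{B}|=1$ and $\mb{B}=-1$ coincide is correct and is exactly what lets $\mc{Q}$ do double duty. What your route buys is economy (no new compact space to build); what it costs is a heavier external input at the end: since Definition \ref{def: quasi-mobius} only constrains quadruples with $|\mb{B}|=1$, one must first invoke the weak-to-strong upgrade for quasi-M\"obius maps on connected spaces (Tukia--V\"ais\"al\"a) before the three-point/cross-ratio condition can be verified for the image curve with a constant depending only on $K$. That upgrade is standard and quantitative, so uniformity survives, but you should cite it explicitly since it is the load-bearing step; the paper's direct appeal to Ahlfors' criterion on $\mc{R}$ avoids it entirely.
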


\begin{proof}
We use Ahlfors' Criterion \cite[Theorem 1]{A63} which says that a (oriented) Jordan curve in $\mb{CP}^1$ is a quasi-circle if and only if $\left|\mb{B}(a,c,b,d)\right|$ is uniformly bounded away from $\infty$ when $(a,b,c,d)$ is positively oriented quadruple on the curve. In order to apply this criterion to the family of Jordan curves $\xi_z(\mc{C})$ we proceed as follows. We consider the space
\[
\mc{R}:=\left\{(z,a,b,c,d)\in\partial\Gamma\times\left(\sqcup_{C\in\mc{C}}{C^{(3)}\times C}\right)\left|\,a<b<c\le d\le a\right.\right\}
\]
and observe that it $\Gamma$-equivariantly fibers over the base
\[
\sqcup_{C\in\mc{C}}{C^{(3)}}
\]
where the fiber over $(a,b,c)$ is the compact set $\partial\Gamma\times\{d\in C\left|d\in[c,a]\right.\}$ (here $[c,a]\subset C$ is the arc with endpoints $c,a$ not containing $b$). As $\Gamma$ acts cocompactly on the base and the fiber is compact, it also acts cocompactly on $\mc{R}$.

Next, we introduce the function 
\[
(z,a,b,c,d)\in\mc{R}\to|\mb{B}_z(a,b,c,d)|
\]
where, as always, $\mb{B}_z$ is the complex cross-ratio defined by 
\[
\mb{B}_z(a,c,b,d)=\frac{\xi_z(b)-\xi_z(a)}{\xi_z(c)-\xi_z(a)}\cdot\frac{\xi_z(d)-\xi_z(c)}{\xi_z(d)-\xi_z(b)}.
\]
In particular, since $\xi_z(d)\neq\xi_z(b)$ (recall that $d$ lies in the arc $[c,a]$ not containing $b$ and $\xi_z$ is injective), we have $\mb{B}_z(a,c,b,d)\in\mb{C}$.

By construction this function is $\Gamma$-invariant and continuous. Since the quotient $\mc{R}/\Gamma$ is compact, we deduce that there exists $K>1$ such that
\[
|\mb{B}_z(a,b,c,d)|<K
\]
for every $(z,a,b,c,d)\in\mc{R}$. Via Ahlfors' Criterion, this shows that $\xi_z(\mc{C})=\sqcup_{C\in\mc{C}}{\xi_z(C)}$ is a collection of uniform quasicircles for every $z\in\partial\Gamma$.
\end{proof}

\begin{prop}\label{p.uqsym}
For every $z\in\deG$, the action $\rho_z:\G\to\Homeo(\xi_z(\calC))$ is uniformly quasi-symmetric.
\end{prop}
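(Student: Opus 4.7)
My plan is to mimic the proof of Proposition \ref{l.uqc}, replacing $\mc{B}$ by the set $\mc{Q}$ from Lemma \ref{l.calQ} and the complex cross-ratio on $\CP$ by the real cross-ratio on $\partial\bH^2$. The central object will be the continuous function
\[
f:\mc{Q}\to\bR,\qquad (a,b,c,d,x,y)\mapsto\mb{B}\bigl(u^C_y(\xi_y(a)),u^C_y(\xi_y(b)),u^C_y(\xi_y(c)),u^C_y(\xi_y(d))\bigr),
\]
where $C$ denotes the peripheral circle containing $a,b,c,d$. The normalization $\mb{B}(u^C_x(\xi_x(a)),\ldots)=-1$ built into $\mc{Q}$ forces $a,b,c,d$ to be pairwise distinct in $\deG$, so their images under $\xi_y$ and $u^C_y$ remain distinct, and hence $f$ takes values in $\bR\setminus\{0\}$ and avoids $\infty$.

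The key step is to verify that $f$ is $\G$-invariant. This is precisely the computation already carried out in \eqref{e.4.23}: the change of uniformization $u^{\gamma C}_{\gamma y}\circ\rho(\gamma,y)\circ(u^C_y)^{-1}$ extends biholomorphically to $\bH^2$ and therefore lies in $\PSL(2,\bR)$, so it preserves cross-ratios on $\partial \bH^2$. Since $\mc{Q}/\G$ is compact by Lemma \ref{l.calQ}, the continuous $\G$-invariant function $|f|$ then attains a positive maximum $K$ and a positive minimum $1/K$ on all of $\mc{Q}$.

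To promote this bound to uniform quasi-symmetry of $\rho_z$, I will fix $\gamma\in\G$, a peripheral circle $C\subset\deG$, and four points $a,b,c,d\in C$ with $\mb{B}(u_z^C(\xi_z(a)),u_z^C(\xi_z(b)),u_z^C(\xi_z(c)),u_z^C(\xi_z(d)))=-1$. By the same equivariance used in Lemma \ref{l.calQ}, the tuple $(\gamma a,\gamma b,\gamma c,\gamma d,\gamma z,z)$ lies in $\mc{Q}$, and $f$ evaluated at it equals
\[
\mb{B}\bigl(u_z^{\gamma C}(\xi_z(\gamma a)),u_z^{\gamma C}(\xi_z(\gamma b)),u_z^{\gamma C}(\xi_z(\gamma c)),u_z^{\gamma C}(\xi_z(\gamma d))\bigr),
\]
which is exactly the cross-ratio that measures the quasi-symmetric distortion of $\rho_z(\gamma)\colon\xi_z(C)\to\xi_z(\gamma C)$ in the prescribed $\mb{RP}^1$-structures. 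The bound on $f$ therefore yields a constant independent of $\gamma$ and $C$, as required.

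The only delicate point I anticipate is the bookkeeping in this last step: one must choose the auxiliary basepoint $y=z$ (not $\gamma z$) in the tuple belonging to $\mc{Q}$, so that $f$ records the cross-ratio in the fixed $\mb{RP}^1$-structure on $\xi_z(\gamma C)$, while $x=\gamma z$ is dictated by the normalization $\mb{B}=-1$ inside $\mc{Q}$. Once this matching is made correctly, the argument is a direct transcription of Proposition \ref{l.uqc}, with peripheral circles and their uniformizations playing the role previously played by the full $\CP$-fibers.
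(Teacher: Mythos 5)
Your proof is correct and follows essentially the same route as the paper: both rest on the $\G$-invariant cross-ratio function on the cocompact space $\mc{Q}$ from Lemma \ref{l.calQ}, bounded away from $0$ and $\infty$ by compactness of $\mc{Q}/\G$. The only (cosmetic) difference is in the last step, where the paper factors $u_z^{\g C}\rho_z(\g)(u_z^C)^{-1}$ as a uniformly quasi-symmetric ``change of fiber'' map composed with a M\"obius map, while you obtain the same bound by evaluating the invariant function directly on the tuple $(\g a,\g b,\g c,\g d,\g z,z)\in\mc{Q}$.
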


\begin{proof}
We consider the $\Gamma$-invariant and continuous function 
\[
\begin{array}{ccl}
\mc{Q}  &\to &\mb{R}  \\
(a,b,c,d,x,y) &\mapsto& 
\mb{B}(u_y^C\xi_y(a),u_y^C\xi_y(c),u_y^C\xi_y(b),u_y^C\xi_y(d)).
\end{array}
\]
Since $\G$ acts co-compactly on $\mc Q$ (Lemma \ref{l.calQ}), the image is uniformly bounded away from zero and $\infty$. 
Thus the map 
\[
(u_y^C\xi_y)(u_x^C\xi_x)^{-1}:\partial\mb{H}^2\to\partial\mb{H}^2
\]
is uniformly quasi-symmetric. 

In order to conclude we need to show that, for every $C\in\calC$ and $\g\in\G$, the map 
$$u_z^{\g C}\rho_z(\g) (u_z^C)^{-1}:\deH\to\deH$$
is uniformly quasi-symmetric. 
To this aim, analogously to the proof of Proposition \ref{p.uqc1}, we denote by $\rho(\g,z):\mb{P}(z^{k+1}/z^{k-1})\to \mb{P}((\g z)^{k+1}/(\g z)^{k-1})$ the conformal map induced by $\rho(\g)$, so that it holds
$$\rho_z(\g)=\xi_z\xi_{\g z}^{-1}\rho(\g,z).$$
We can then write 
$$\begin{array}{rl}
 u_z^{\g C}\rho_z(\g) (u_z^C)^{-1}&=u_z^{\g C}\xi_z\xi_{\g z}^{-1}\rho(\g,z) (u_z^C)^{-1}\\
&=\displaystyle{u_z^{\g C}\xi_z\xi_{\g z}^{-1}(u_{\g z}^{\g C})^{-1}u_{\g z}^{\g C}\rho(\g,z) (u_z^C)^{-1}}\\
&=(u_z^{\g C}\xi_z)(u_{\g z}^{\g C}\xi_{\g z})^{-1}\circ (u_{\g z}^{\g C}\rho(\g,z) (u_z^C)^{-1}).
\end{array}$$
This is the composition of the uniformly quasi-symmetric map $(u_z^{\g C}\xi_z)(u_{\g z}^{\g C}\xi_{\g z})^{-1}$ and the map $(u_{\g z}^{\g C}\rho(\g,z) (u_z^C)^{-1})$ which preserves the cross-ratio (recall Equation \eqref{e.4.23}), and is thus uniformly quasi-symmetric.
\end{proof}
\begin{cor}\label{cor.stab}
For every peripheral circle $C$, and every $z\in\deG$ the action 
$${\rho_z}|_C:{\rm Stab}_\G(C)\to\Homeo(\xi_z(C))$$
is uniformly quasi-symmetric. 
\end{cor}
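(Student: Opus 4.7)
The corollary is essentially an immediate specialization of Proposition \ref{p.uqsym}, and my plan is to extract it by restricting attention to elements that fix the peripheral circle $C$. More precisely, I would argue as follows.

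First I would recall that, by construction, the action $\rho_z|_C : \Stab_\G(C) \to \Homeo(\xi_z(C))$ is said to be uniformly quasi-symmetric in the sense of Definition \ref{dfn:quasi-symmetric} if and only if, after conjugating by the uniformization $u_z^C : \xi_z(C) \to \partial \H^2$, the resulting family
\[
\bigl\{\, u_z^C \circ \rho_z(\gamma) \circ (u_z^C)^{-1} : \partial \H^2 \to \partial \H^2 \,\bigr\}_{\gamma \in \Stab_\G(C)}
\]
is a uniformly quasi-symmetric family of homeomorphisms of $\partial \H^2 = \RP$. So the goal reduces to exhibiting a uniform quasi-symmetry constant for this collection.

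Next I would invoke Proposition \ref{p.uqsym}, whose proof establishes the stronger statement that the composition $u_z^{\gamma C} \circ \rho_z(\gamma) \circ (u_z^C)^{-1} : \partial \H^2 \to \partial \H^2$ is $K$-quasi-symmetric for some $K$ independent of the choices of $z\in\deG$, of the peripheral circle $C$, and of $\gamma\in \G$. Specializing to $\gamma \in \Stab_\G(C)$, we have $\gamma C = C$, so the composition above simplifies exactly to
\[
u_z^C \circ \rho_z(\gamma) \circ (u_z^C)^{-1},
\]
which is therefore $K$-quasi-symmetric with the same uniform constant $K$.

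This is the entire content of the corollary, so there is really no obstacle beyond this observation; the substantive work was already carried out in the proof of Proposition \ref{p.uqsym}, where co-compactness of the $\G$-action on the space $\mc{Q}$ of Lemma \ref{l.calQ} supplied the uniform bound on the cross-ratios $\mb B(u_y^C\xi_y(a),u_y^C\xi_y(c),u_y^C\xi_y(b),u_y^C\xi_y(d))$. In particular, the uniformity is automatic across all peripheral circles simultaneously, not merely across elements of a single stabilizer.
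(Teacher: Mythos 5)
Your proposal is correct and takes the same route as the paper, which states the corollary without a separate proof as an immediate consequence of Proposition \ref{p.uqsym}: specializing the uniform quasi-symmetry of $u_z^{\gamma C}\circ\rho_z(\gamma)\circ(u_z^C)^{-1}$ to $\gamma\in\Stab_\G(C)$, where $\gamma C=C$, is exactly the intended deduction.
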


We now fix a point $z\in\deG$ and  representatives $C_1,\ldots, C_n$ of the peripheral circles modulo the $\G$-action, and denote by $D_j:=D_z^{C_j}\subset \mb{P}(z^{j+1}/z^{j-1})$ the disk bounded by $\xi_z(C_j)$. The second step of the proof is to extend, for every $j$, the uniformly quasi-symmetric action { ${\rho_z}|_{C_j}$ to a uniformly quasi-conformal action $\hat\rho^{D_j}_z:{{\rm Stab}_\G(C_j)}\to \Homeo (D_j).$} For this we will use the following result by Markovic.

\begin{thm}[{\cite[Theorem 1.1]{Markovic}}]\label{t.Markovic}
Let $G$ be a discrete $K$-quasi-symmetric group. Then there exists a $K'$-quasi-conformal map $\phi:\HH^2\to \HH^2$ and a subgroup $T<\PSL(2,\R)$ such that $G = \phi T\phi^{-1}$.
The constant $K'$ is a function of $K$.
\end{thm}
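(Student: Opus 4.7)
The plan is to first apply the qualitative Hinkkanen rigidity theorem to produce a quasi-symmetric conjugation of $G$ into $\PSL(2,\R)$, then extend it via Ahlfors--Beurling to a QC map of $\HH^2$. The discreteness hypothesis enters at the end, to upgrade the construction to a quantitative one.

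Concretely, applying Hinkkanen's theorem for uniformly quasi-symmetric groups on $\dH$, obtain a QS homeomorphism $h:\dH\to\dH$ with $T := hGh^{-1}\subset \PSL(2,\R)$. By Theorem \ref{thm:DEextension}, $h^{-1}$ extends to a QC map $\phi: \HH^2\to \HH^2$, and by construction $\phi|_{\dH}=h^{-1}$. Hence $G = h^{-1} T h = \phi T \phi^{-1}$ as QS groups on $\dH$, and since each M\"obius map in $T$ has a unique isometric extension to $\HH^2$, the identification extends canonically to the disk.

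The main obstacle is to bound the QC constant $K'$ of $\phi$ in terms of $K$ alone. Hinkkanen's theorem is qualitative: the QS constant of $h$ is not a priori controlled by $K$, so Ahlfors--Beurling does not immediately yield a quantitative QC constant. Here the discreteness of $G$ is essential. I would argue by contradiction and compactness: a hypothetical sequence $(G_n)$ of discrete $K$-QS groups whose optimal conjugators $h_n$ have unbounded QS constant would, after normalizing $h_n$ at three boundary points, be forced to degenerate along a subsequence. Discreteness of the $G_n$ prevents the degeneration from producing a non-discrete QS limit group, while the resulting normal-families compactness forces the QS constants of the $h_n$ to be uniformly bounded---contradicting the assumption. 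This normalization-and-limits step, where discreteness is used to rule out collapse, is the technical heart of Markovic's argument. Once $h$ is shown to be $K''(K)$-quasi-symmetric, Ahlfors--Beurling yields the advertised bound $K' = K'(K)$.
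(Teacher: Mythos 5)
This statement is not proved in the paper at all: it is Markovic's theorem, quoted from \cite{Markovic}, and the paper only adds a remark clarifying that Markovic's proof in fact produces a quantitative quasi-conformal extension. So the real question is whether your argument is a valid proof of Markovic's theorem, and it is not. The first step begs the question. Hinkkanen's rigidity results for uniformly quasi-symmetric groups of the circle cover the abelian case, the non-discrete case, and groups of the real line fixing a point; the discrete non-elementary case on $\mb{RP}^1$ is precisely the case Hinkkanen could \emph{not} handle, and resolving it is the content of Markovic's theorem. Invoking ``Hinkkanen's theorem'' to get a quasi-symmetric $h$ conjugating a general discrete $K$-quasi-symmetric group into $\PSL(2,\R)$ is assuming the conclusion. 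The actual route (as the paper's remark indicates) is quite different: one uses convergence-group theory (Tukia, Gabai, Casson--Jungreis) to get a topological conjugacy, and the analytic heart of Markovic's work is the construction of a uniformly quasi-conformal extension of the group action from $\partial\HH^2$ to $\HH^2$, to which the Sullivan--Tukia theorem for quasi-conformal groups of the disk is then applied.

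The second step, upgrading a qualitative conjugation to a quantitative one by contradiction and compactness, is also not a proof as written. There is no well-defined ``optimal conjugator'' varying continuously with the group, the family of discrete $K$-quasi-symmetric groups is not compact in any topology that would make your normal-families argument run, and limits of discrete groups can be elementary or non-discrete, so discreteness does not by itself ``rule out collapse.'' Even granting a qualitative statement, extracting uniform constants requires tracking them through the construction (as Markovic does), not a soft limiting argument. Finally, a small point on the statement itself: once $h$ is $K''(K)$-quasi-symmetric, the Ahlfors--Beurling extension does give a $K'(K)$-quasi-conformal $\phi$ with $G=\phi T\phi^{-1}$ as boundary groups, but the identity $G=\phi T\phi^{-1}$ in the theorem is an identity of groups acting on $\HH^2$, which presupposes that the elements of $G$ have already been extended to the disk compatibly --- again, this is part of what Markovic constructs rather than something that follows formally.
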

\begin{remark}
    Markovic states his result strictly in terms of a quasi-symmetric conjugation, but the proof actually builds a $K'$-quasi-conformal group extending $G$ as stated in Theorem \ref{t.Markovic} and discussed in the introduction of \cite{Markovic} and in \S8.3, therein.
\end{remark}

\begin{prop}\label{p.hatrz}
 For every $1\leq j\leq n$,  the action $\rho_z|_{C_j}:{{\rm Stab}_\G(C_j)}\to\Homeo(\xi_z(C_j))$ extends to a uniformly quasi-conformal action 
 $$\hat\rho^{D_j}_z:{{\rm Stab}_\G(C_j)}\to \Homeo (D_j).$$ 
\end{prop}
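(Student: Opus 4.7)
The plan is to transport the quasi-symmetric boundary action via the uniformization $u_z^{C_j}:\xi_z(C_j)\to\partial\HH^2$, apply Markovic's Theorem \ref{t.Markovic} to obtain a quasi-conformal extension on $\HH^2$, and then pull the extension back to $D_j$ by the biholomorphism $u_z^{D_j}:D_j\to\HH^2$.

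First I will verify that the conjugated action
\[\psi_j:=u_z^{C_j}\circ\rho_z|_{C_j}\circ(u_z^{C_j})^{-1}:{\rm Stab}_\G(C_j)\to\Homeo(\partial\HH^2)\]
satisfies the hypotheses of Markovic's theorem. Uniform $K$-quasi-symmetry, with $K$ independent of $j$, follows from Corollary \ref{cor.stab} together with the fact that $u_z^{C_j}$ is the boundary restriction of a biholomorphism and hence preserves cross-ratios on $\partial \HH^2$. Discreteness of $\psi_j({\rm Stab}_\G(C_j))$ in $\Homeo(\partial\HH^2)$ is immediate from Theorem \ref{t.kk}: ${\rm Stab}_\G(C_j)$ is a quasi-convex surface subgroup acting cocompactly on the distinct triples of $C_j$, with finite kernel (contained in the finite kernel of the action $\G\curvearrowright\deG$), so its image in $\Homeo(\RP)$ is discrete.

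Next I apply Theorem \ref{t.Markovic} to $\psi_j({\rm Stab}_\G(C_j))$ to obtain, for each $j$, a $K'$-quasi-conformal homeomorphism $\phi_j:\HH^2\to\HH^2$ and a subgroup $T_j<\PSL(2,\R)$ such that $\psi_j({\rm Stab}_\G(C_j))=\phi_jT_j\phi_j^{-1}$, where as indicated in the remark following Theorem \ref{t.Markovic} this identity holds as groups of homeomorphisms of $\HH^2$, the boundary restriction recovering $\psi_j$. This yields a $K''$-quasi-conformal action $\hat\psi_j:{\rm Stab}_\G(C_j)\to\Homeo(\HH^2)$ with $K''=K''(K)$ uniform in $j$ and in $\g\in{\rm Stab}_\G(C_j)$, extending $\psi_j$. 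Setting
\[\hat\rho_z^{D_j}(\g):=(u_z^{D_j})^{-1}\circ\hat\psi_j(\g)\circ u_z^{D_j}\]
produces the desired uniformly $K''$-quasi-conformal action on $D_j$, since conjugation by the conformal map $u_z^{D_j}$ preserves the quasi-conformal distortion; by construction its boundary restriction is $\rho_z|_{C_j}$.

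The main obstacle is ensuring that Markovic's theorem genuinely provides a quasi-conformal extension on $\HH^2$ rather than only a quasi-symmetric conjugacy on $\partial\HH^2$; this is addressed in the remark following Theorem \ref{t.Markovic}. Uniformity of the constant $K''$ across the finitely many peripheral representatives $C_1,\ldots,C_n$ and across all group elements is automatic from the uniform constant in Corollary \ref{cor.stab}, which will be essential when the actions $\hat\rho_z^{D_j}$ are subsequently assembled into a uniformly quasi-conformal action on all of $\mb{P}(z^{k+1}/z^{k-1})$.
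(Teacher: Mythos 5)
Your argument is correct and follows the same route as the paper, whose proof is the one-line statement that the result follows by combining Markovic's Theorem \ref{t.Markovic} with Corollary \ref{cor.stab}; you have simply made explicit the conjugation by the uniformizations $u_z^{C_j}$, $u_z^{D_j}$, the discreteness hypothesis, and the uniformity of the constants. No gaps.
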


\begin{proof}
This follows combining Theorem \ref{t.Markovic} and Corollary \ref{cor.stab}.
\end{proof}

We will use the actions $\hat\rho_z^{D_j}$ to extend the action $\rho_z:\G\to \Homeo(\xi_z(\deG))$ to an action by homeomorphisms on $\P(z^{k+1}/z^{k-1})$. In order to check that the extension is uniformly quasi-conformal we will use the following result by Bonk 
\begin{prop}[{\cite[Proposition 5.1]{Bonk}}]\label{prop.Bonk}
Suppose that $\{D_i|\, i\in I\}$ is a non-empty family of pairwise disjoint closed Jordan regions in $\CP$,  let $$f : \CP \setminus \bigcup_{i\in I} {\rm int}(D_i) \to \CP$$ be  $K$-quasi-M\"obius. If for $i\in I$ the Jordan curves $S_i := \partial D_i$ are $K$-quasicircles, and for every $i$ there exists a $K$-quasi-conformal map $F_i:D_i\to \CP$ with $F_i|_{S_i}=f$. Then the extension $f\cup F_i$ is $H$-quasi-conformal, where $H$ depends on $K$ only.
\end{prop}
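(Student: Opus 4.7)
The plan is to verify that the pieced-together map $g := f \cup \bigcup_{i\in I} F_i : \CP \to \CP$ is a homeomorphism with uniformly bounded linear dilatation everywhere, and conclude via the standard equivalence between metric quasi-conformality and geometric quasi-conformality for homeomorphisms of $\CP$.

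First I would check that $g$ is a well-defined orientation-preserving homeomorphism. On each $S_i$ the two definitions agree by hypothesis ($F_i|_{S_i} = f|_{S_i}$), so $g$ is continuous across every $S_i$. Injectivity follows because the $D_i$ are pairwise disjoint, $f$ is injective on the carpet $C := \CP \setminus \bigcup_i \text{int}(D_i)$, and each $F_i$ maps $D_i$ injectively into the component of $\CP \setminus f(S_i)$ bounded by $f(S_i)$. A simple degree/Brouwer-type argument then shows $g$ is surjective onto $\CP$ and orientation-preserving.

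Next I would bound the linear dilatation $H_g(p) = \limsup_{r \to 0}\frac{\max_{|q-p|=r}|g(q)-g(p)|}{\min_{|q-p|=r}|g(q)-g(p)|}$ (computed in charts) by a constant $H = H(K)$ at every $p \in \CP$. At points in $\text{int}(D_i)$, the bound is $K$ from quasi-conformality of $F_i$. At an interior point of $C$, a $K$-quasi-M\"obius embedding of an open set is locally $K'$-quasi-conformal for $K'=K'(K)$, so the dilatation is again bounded. The only nontrivial case is $p \in S_i$, and this is the main obstacle of the proof. For such $p$, any small round ball $B(p,r)$ meets both sides of $S_i$, and one needs to compare values of $g$ coming from the two different descriptions $f$ and $F_i$.

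To handle the boundary case, I would use the quasicircle hypothesis via Ahlfors' three-point condition on $S_i$ to control the local geometry of $S_i$ near $p$: points on opposite sides of $S_i$ close to $p$ can be "projected" onto $S_i$ with bounded multiplicative distortion of distances. Combining this with (i) the $K$-quasi-M\"obius bound on $f$ restricted to the four-tuples from the carpet side of $S_i$, (ii) the $K$-quasi-conformal bound on $F_i$ on the $D_i$-side, and (iii) the fact that both maps agree on $S_i$ itself, one obtains a uniform upper bound on $\limsup_{r\to 0}\max/\min$ of $|g(q)-g(p)|$ on the circle $|q-p|=r$. The uniformity follows because the quasicircle constant, the quasi-M\"obius constant, and the quasi-conformal constants are all $\le K$, independently of $i$ and $p$. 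Finally, since $g$ is a homeomorphism of $\CP$ with $H_g \le H(K)$ almost everywhere and is locally quasi-conformal away from $\bigcup_i S_i$ (a countable union of quasicircles, each removable for quasi-conformal maps by Lehto--Virtanen), $g$ extends to an $H$-quasi-conformal map on all of $\CP$ with $H$ depending only on $K$.
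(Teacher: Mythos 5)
Your strategy---glue, then bound the linear dilatation at every point and invoke the metric definition of quasi-conformality---could in principle work, but the case analysis omits the case that is actually hard, and the final ``removability'' patch does not repair the omission. You treat three kinds of points: points of $\mathrm{int}(D_i)$, interior points of the carpet $T=\CP\setminus\bigcup_i\mathrm{int}(D_i)$, and points of some $S_i$. In the situation the proposition is used for, $T$ is a Sierpinski carpet with empty interior, so your second case is vacuous, and the bulk of $T$ consists of points $p\in T\setminus\bigcup_iS_i$: points lying on no peripheral circle such that every small circle $\{|q-p|=r\}$ meets infinitely many of the disks $D_i$. At such a $p$ the value $g(q)$ is governed sometimes by $f$ and sometimes by one of infinitely many different maps $F_i$; neither the quasi-M\"obius bound on $f$ (which only constrains quadruples inside $T$) nor the quasi-conformality of any single $F_i$ controls the ratio $\max/\min$ over the circle. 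One must combine the uniform quasicircle condition (so that each $D_i$ met by a small circle about $p$ has diameter and displacement comparable to those of its trace on $T$), boundary quasisymmetry of the $F_i$, and the quasi-M\"obius estimate for $f$, uniformly in $i$ and in the scale $r$. This is exactly the computation on pp.~588--589 of Bonk's proof that the paper defers to, and it is absent from your argument.

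The closing sentence does not rescue this. A homeomorphism of $\CP$ that is locally quasi-conformal on $\bigcup_i\mathrm{int}(D_i)$ and has $H_g\le H$ almost everywhere need not be quasi-conformal: the metric criterion tolerates an exceptional set only if it is removable in a strong sense (for instance of $\sigma$-finite linear measure), and $T\setminus\bigcup_iS_i$ is in general a set of Hausdorff dimension close to $2$, possibly even of positive area since the proposition is stated with no measure hypothesis on $T$ (so even ``$H_g\le H$ a.e.'' is unjustified as written). Removability of each individual quasicircle $S_i$ is beside the point. Note that the paper itself gives no independent proof: it cites \cite[Proposition 5.1]{Bonk} and observes that the last step of Bonk's argument already yields the statement. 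If you want a self-contained proof, the cleaner route is global rather than pointwise: show that the glued map is $\eta$-quasi-M\"obius on all of $\CP$ by a case analysis on where the four points of a quadruple lie, replacing a point interior to some $D_i$ by a nearby point of $S_i$ at the cost of a controlled change of cross-ratio (this is where the quasicircle constant and the quasisymmetry of $F_i$ up to $S_i$ enter), and then use that a quasi-M\"obius self-homeomorphism of $\CP$ is quasi-conformal with constants depending only on $\eta$.
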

\begin{proof}
While the statement of \cite[Proposition 5.1]{Bonk} is more general, the last step of its proof, carried out at p. 588-589 gives an elementary proof of the statement.
\end{proof}

We now have all the ingredients necessary to obtain the sought uniform quasi-conformal action of $\G$ on $\CP$.
\begin{prop}\label{p.extSier} {Assume that the boundary of $\G$ is a Sierpinski carpet or a circle. }The action $\rho_z$ on $\xi_z(\bord\G)$ extends to a uniformly quasi-conformal action $\hat\rho_z$ on $\P(z^{k+1}/z^{k-1})$.
\end{prop}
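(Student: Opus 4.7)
The plan is to extend $\rho_z$ on the carpet $\xi_z(\deG)$ to each complementary disk in a way that produces a uniformly quasi-conformal $\G$-action on $\mb{CP}^1$. Three tools will do the work: Proposition \ref{l.uqc}, giving that $\rho_z(\gamma)$ is uniformly quasi-M\"obius on the carpet; Proposition \ref{p.hatrz}, which, via Markovic's Theorem \ref{t.Markovic}, extends the boundary action of each peripheral stabilizer $\Stab_\G(C_j)$ on $\xi_z(C_j)$ to a uniformly quasi-conformal action $\hat\rho_z^{D_j}$ on the representative disk $D_j$; and Bonk's Proposition \ref{prop.Bonk}, which glues the carpet action to uniformly quasi-conformal disk extensions into a single uniformly quasi-conformal map.

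First I would fix orbit representatives $C_1,\ldots,C_n$ of peripheral circles (Theorem \ref{t.kk}; in the degenerate circle case a single $C_1=\deG$ with its two complementary disks treated in parallel). For every peripheral circle $C$, choose $\gamma_C\in\G$ with $\gamma_C C_{j(C)}=C$ and $\gamma_{C_j}=\Id$. By Proposition \ref{p.uqsym}, the conjugated maps $u_z^C\circ\rho_z(\gamma_C)\circ (u_z^{C_{j(C)}})^{-1}:\bord\mb{H}^2\to\bord\mb{H}^2$ are uniformly $K$-quasi-symmetric as $C$ varies, so Ahlfors--Beurling (Theorem \ref{thm:DEextension}) produces transport maps $T_C:D_z^{C_{j(C)}}\to D_z^C$ of uniform quasi-conformal constant that agree with $\rho_z(\gamma_C)$ on $\xi_z(C_{j(C)})$. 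For every $\gamma\in\G$ and peripheral $C$, since $\gamma_{\gamma C}^{-1}\gamma\gamma_C\in\Stab_\G(C_{j(C)})$, I would define
\[
\hat\rho_z(\gamma)\big|_{D_z^C}:=T_{\gamma C}\circ\hat\rho_z^{D_{j(C)}}\bigl(\gamma_{\gamma C}^{-1}\gamma\gamma_C\bigr)\circ T_C^{-1},
\]
together with $\hat\rho_z(\gamma)=\rho_z(\gamma)$ on $\xi_z(\deG)$.

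The verification has three parts. For boundary consistency along each $\xi_z(C)$, the composition above reduces to $\rho_z(\gamma_{\gamma C})\rho_z(\gamma_{\gamma C}^{-1}\gamma\gamma_C)\rho_z(\gamma_C^{-1})=\rho_z(\gamma)$, so the formula glues continuously across every peripheral circle and, together with $\rho_z$ on the carpet, defines a homeomorphism of $\mb{CP}^1$. For the group-action property, in the composition $\hat\rho_z(\gamma)\circ\hat\rho_z(\delta)|_{D_z^C}$ the middle pair $T_{\delta C}^{-1}\circ T_{\delta C}$ cancels and the two $\hat\rho_z^{D_{j(C)}}$-factors multiply, via its homomorphism property on the stabilizer, to the required $\hat\rho_z^{D_{j(C)}}(\gamma_{\gamma\delta C}^{-1}\gamma\delta\gamma_C)$. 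Uniform quasi-conformality of each $\hat\rho_z(\gamma)|_{D_z^C}$ is automatic from the uniform bounds on the three composed pieces; combined with the uniformly quasi-M\"obius bound on the carpet and the uniform quasicircle bound of Proposition \ref{p.quasicircles}, Bonk's Proposition \ref{prop.Bonk} produces a single constant $H$ such that each $\hat\rho_z(\gamma)$ is $H$-quasi-conformal on $\mb{CP}^1$.

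The main obstacle I expect is arranging the transport maps $T_C$ to have uniform quasi-conformal constants as $C$ ranges over all peripheral circles at once; the rest is an algebraic check and a single application of Bonk's gluing theorem. This simultaneous uniformity is exactly what Proposition \ref{p.uqsym} provides, controlling all peripheral pairs at the same time rather than orbit-by-orbit.
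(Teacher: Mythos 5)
Your proposal is correct and follows essentially the same route as the paper: your transport maps $T_C$ are the paper's extensions $f_D$ of $\rho_z(\alpha_C)$ obtained from Proposition \ref{p.uqsym} via Ahlfors--Beurling, the conjugation formula on each complementary disk is identical, and the verification (boundary consistency, telescoping for the homomorphism property, and Bonk's gluing with the uniform quasicircle bound of Proposition \ref{p.quasicircles}) matches the paper's argument. No gaps to report.
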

\begin{proof}
By the structure of a Sierpinski carpet, the complement 
\[\mb{P}(z^{k+1}/z^{k-1})\setminus\xi_z(\partial\Gamma)\] 
is a disjoint union of disks $D$ bounded by the images of the peripheral circles $\xi_z(C)$.  We fix, as above, representatives $C_1,\cdots,C_n$ for the $\G$-orbits on $\calC$,  and denote by $D_j\subset \mb{P}(z^{k+1}/z^{k-1})$ the complementary disk with boundary $\xi_z(C_j)$. We furthermore choose representatives $\alpha_C$ for the cosets of ${\rm Stab}_\Gamma(C_i)$, $1\leq i\leq n$, so that  every complementary disk $D$, bounded by the image $\xi_z(C)$ of the peripheral circle $C$, can be written uniquely as $\alpha_C D_j$ for some $j$. The homeomorphism $$\rho_z(\alpha_C):\xi_z(C_j)\to\xi_z(C)$$ is $K$-quasi-symmetric (Proposition \ref{p.uqsym}), and we fix, for every $D$, a $K^*$-quasi-conformal homeomorphism 
$$f_D:D_j\to D$$
extending $\rho_z(\alpha_C)$, e.g., via Theorem \ref{thm:DEextension}. Here $K^*$ only depends on $K$.

Now we extend the action element by element and disk by disk: for every $\g\in\G$ and every complementary disk $D=\alpha_C D_j$ we set
\begin{equation}\label{e.rhohD}\hat\rho^D_z(\g)= f_{\g D}\circ \hat\rho^{D_j}_z(\alpha^{-1}_{\g C}\g\alpha_C)\circ f^{-1}_D:D\to\g D.
\end{equation}
Here $\hat\rho^{D_j}_z$ is the {$(K')^2$}-quasi-conformal action of $\Stab_\G(C_j)$ on $D_j$ studied in Proposition \ref{p.hatrz} (by construction $\alpha^{-1}_{\g C}\g\alpha_C\in\Stab_\G(C_j)$), which extends $\rho_z$ on $\xi_z(C_j)$. Here, with a slight abuse, we denote by $\g D$ the complementary disk bounded by $\xi_z(\g C)$, and $K'$, again, only depends on $K$.

Since $\hat \rho^D_z(\g)$ extends $\rho_z(\g)|_C$, we obtain through this construction a well defined element $\hat\rho_z(\g)\in \Homeo(\mb{P}(z^{k+1}/z^{k-1}))$. For every $\g\in \G$, $\hat \rho^D_z(\g)$ is furthermore $(K^*K')^2$-quasi-conformal: indeed it is so when restricted to any complementary disk $D$ by Proposition \ref{p.hatrz} and Equation \eqref{e.rhohD}. 
Since $\rho_z(\g)|_{\xi_z(\deG)}$ is uniformly quasi-Möbius (Proposition \ref{l.uqc}), and the elements of $\calC$ are uniform quasi-circles (Proposition \ref{p.quasicircles}), it follows from Proposition \ref{prop.Bonk} that the extension $\hat\rho_z(\g)$ is uniformly quasi-conformal.

In order to conclude the proof, it is enough to verify that $\hat\rho_z:\G\to\Homeo(\P(z^{k+1}/z^{k-1}))$ is a group homomorphism. Clearly $\hat\rho_z(\Id)=\Id$.
Let us then fix elements $\beta,\g\in\G$ and  a complementary disk $D\subset \mb{P}(z^{k+1}/z^{k-1})$ with boundary $\xi_z(C)$ for a peripheral circle $C$. Let $1\leq j\leq n$ and $\alpha_C, \alpha_{\gamma C}, \alpha_{\beta\gamma C}$ among our representatives of the cosets of $\Stab_\G(C_j)$ so that $C=\alpha_C C_j$, $\gamma C=\alpha_{\g C} C_j$ and $\beta\gamma C=\alpha_{\beta\g C} C_j$.
By definition 
$$\hat\rho_z^D(\beta\gamma)=f_{\beta\gamma D}\circ\hat\rho^{D_j}_z(\alpha_{\beta\gamma C}^{-1}\beta\gamma\alpha_{C})\circ f_D^{-1}$$
where $f_{\beta\gamma D}:D_j\to \beta\gamma D$ (resp. $f_{D}:D_j\to  D$) is our chosen $K^*$-quasi-conformal extension of $\rho_z(\alpha_{\beta\gamma C})|_{\xi_z(C_j)}$ (resp. of $\rho_z(\alpha_{C})|_{\xi_z(C_j)}$).

We can write 
$$\alpha_{\beta\gamma C}^{-1}\beta\gamma\alpha_{C}=(\alpha_{\beta\gamma C}^{-1}\beta\alpha_{\gamma C})\cdot(\alpha_{\gamma C}^{-1}\gamma\alpha_{C}).$$
Since both factors belong to ${\rm Stab}_\Gamma(C_j)$, we deduce
$$\hat\rho^{D_j}_z(\alpha_{\beta\gamma C}^{-1}\beta\gamma\alpha_{C})=\hat\rho^{D_j}_z(\alpha_{\beta\gamma C}^{-1}\beta\alpha_{\g C})\circ\hat\rho^{D_j}_z(\alpha_{\gamma C}^{-1}\gamma\alpha_{C}).$$
From this it directly follows 
$$\begin{array}{rl}
\hat\rho_z^D(\beta\gamma)&=f_{\beta\gamma D}\circ\hat\rho^{D_j}_z(\alpha_{\beta\gamma C}^{-1}\beta\gamma\alpha_{C})\circ f_D^{-1}\\
&=f_{\beta\gamma D}\circ\hat\rho^{D_j}_z(\alpha_{\beta\gamma C}^{-1}\beta\alpha_{\g C})\circ\hat\rho^{D_j}_z(\alpha_{\gamma C}^{-1}\gamma\alpha_{C})\circ f_D^{-1}\\
&=(f_{\beta\gamma D}\circ\hat\rho^{D_j}_z(\alpha_{\beta\gamma C}^{-1}\beta\alpha_{\g C})\circ f_{\g D}^{-1})\circ(f_{\g D}\circ\hat\rho^{D_j}_z(\alpha_{\gamma C}^{-1}\gamma\alpha_{C})\circ f_D^{-1})\\
&=\hat\rho_z^{\g D}(\beta)\circ\hat\rho_z^D(\gamma)
\end{array}$$
which concludes the proof in the case of Sierpinski carpet boundary. 

The same argument applies in the case where the boundary is a circle, with the simplification that in this case there are only two disks to fill.
\end{proof}

Theorem \ref{t.kleinian} for groups $\G$ whose boundary is a Sierpinski carpet then follows from Theorem \ref{t.Sullivan} as in the previous case, since we have produced a discrete uniformly quasi-conformal action of any such group $\G$ with finite kernel.

\subsubsection{General case}
In order to deduce the general case  from the previous two cases we use a characterization of  Haïssinsky of groups virtually isomorphic to Kleinian groups based on the Ahlfors regular conformal dimension. The \emph{Ahlfors regular conformal dimension} $\cdAR(\G)$ of a hyperbolic group $\G$ is a numerical invariant of the quasi-isometry class of $\G$. We will not need its definition (see \cite[Section 1.1.3]{Ha15}) but the following theorem by Haïssinsky:
\begin{thm}[{\cite[Theorem 1.9]{Ha15}}]\label{t.Hei1}
Let $\G$ be a non-elementary hyperbolic group with planar boundary. Then $\G$ is virtually isomorphic to a convex-cocompact Kleinian group if and only if $\cdAR(\G)<2$.
\end{thm}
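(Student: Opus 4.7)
The plan is to prove the two implications separately: the forward direction is essentially classical, while the converse requires a quasi-symmetric uniformization theorem for planar carpets together with Sullivan's rigidity.

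For the forward direction, suppose $\G$ is virtually isomorphic to a convex-cocompact Kleinian group $\G_0\subset \PSL(2,\mb C)$. The planarity of $\partial\G$ (meaning embedability in the plane, hence excluding $S^2$) rules out the case where $\G_0$ is a uniform lattice, whose limit set is all of $\CP$. Thus $\G_0$ is convex-cocompact but not cocompact, and by the Sullivan--Tukia bound on Hausdorff dimension of limit sets of such Kleinian groups, $\dim_H(\Lambda_{\G_0}) < 2$. Identify $\partial\G$ quasi-symmetrically with $\Lambda_{\G_0}\subset \CP$ endowed with the restricted Euclidean metric, which is Ahlfors $\dim_H(\Lambda_{\G_0})$-regular and therefore lies in the Ahlfors regular conformal gauge of $\partial\G$. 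Since $\cdAR(\G)$ is the infimum over the gauge, $\cdAR(\G)\le \dim_H(\Lambda_{\G_0})<2$.

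For the converse, fix a metric $d$ on $\partial\G$ in the conformal gauge that is Ahlfors $Q$-regular for some $Q<2$, and recall that a non-elementary hyperbolic group with planar boundary has $\partial\G$ homeomorphic either to a circle or to a Sierpinski carpet (the $2$-sphere case being ruled out by the planarity hypothesis). The circle case reduces to the Tukia--Gabai--Casson--Jungreis convergence group theorem: $\G$ is virtually Fuchsian, hence virtually Kleinian, and $\cdAR(\G)=1$ is automatic. The Sierpinski carpet case is the substantive one: invoke Bonk's quasi-symmetric uniformization theorem for carpets, which says that an Ahlfors $Q$-regular, linearly locally connected metric carpet with $Q<2$ satisfying a combinatorial Loewner-type condition (verifiable for boundaries of hyperbolic groups) admits a quasi-symmetric embedding $\iota:\partial\G\to \CP$ onto a round carpet $X$. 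This conjugates the natural uniformly quasi-Möbius action of $\G$ on $(\partial\G,d)$ to a uniformly quasi-Möbius action on $X\subset \CP$.

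To promote this to a uniformly quasi-conformal action on all of $\CP$, run the strategy developed in \S\ref{ss.Sierpinski}: each peripheral stabilizer acts uniformly quasi-symmetrically on its round peripheral circle; extend via Ahlfors--Beurling (Theorem \ref{thm:DEextension}) to the bounded complementary round disk; and glue the pieces via Proposition \ref{prop.Bonk} to obtain a uniformly quasi-conformal extension $\hat\rho:\G\to \Homeo(\CP)$. Sullivan's rigidity theorem (Theorem \ref{t.Sullivan}) then conjugates $\hat\rho(\G)$ into $\PSL(2,\mb C)$, producing a discrete faithful representation whose limit set is the carpet and whose domain of discontinuity consists of the complementary round disks with compact quotient. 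The resulting group is convex-cocompact and $\G$ is virtually isomorphic to it.

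The main obstacle is Bonk's quasi-symmetric uniformization theorem for carpets, whose proof depends on delicate combinatorial modulus estimates and on verifying the combinatorial Loewner property for carpet boundaries of hyperbolic groups; alongside this, ruling out the $2$-sphere topological type (which in the wider literature goes through the Bonk--Kleiner characterization of Ahlfors $2$-regular spheres) is required when one drops the strict interpretation of ``planar'' used above. The remaining inputs — Sullivan--Tukia, Ahlfors--Beurling extension, and the gluing proposition — are either classical or precisely the pieces already assembled in this paper.
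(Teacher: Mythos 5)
You should note at the outset that the paper contains no proof of this statement: it is quoted verbatim from Ha\"issinsky \cite[Theorem 1.9]{Ha15} and used as a black box, so your attempt must be judged as a proof of Ha\"issinsky's theorem itself. Your forward implication is essentially fine (quasi-isometry invariance of $\cdAR$, Ahlfors regularity of the limit set of a convex-cocompact Kleinian group in its critical exponent, and the Sullivan--Tukia bound). The converse, however, has a genuine gap: the reduction to the circle and Sierpinski carpet cases is false. A non-elementary hyperbolic group with planar boundary need not have boundary a circle or a carpet: free groups have Cantor set boundary, groups splitting over finite subgroups have disconnected boundary, and groups splitting over two-ended subgroups (for instance fundamental groups of hyperbolic $3$-manifolds with incompressible but non-acylindrical boundary) have planar boundaries with local cut points. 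The Kapovich--Kleiner trichotomy you implicitly invoke applies only to connected boundaries without local cut points. Treating the general planar case --- via canonical splittings and a combination argument assembling Kleinian structures on the pieces --- is the bulk of Ha\"issinsky's work and is exactly what your proof omits; indeed the present paper cites this theorem (together with Theorem \ref{t.Hei2}) precisely because its own methods in \S\ref{ss.Sierpinski} only handle the carpet and circle cases.

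Even in the carpet case, your key analytic step is not an available theorem. Bonk's uniformization theorem applies to carpets \emph{already embedded} in $\CP$ whose peripheral circles are uniformly relatively separated uniform quasicircles; there is no theorem asserting that an abstract Ahlfors $Q$-regular, $Q<2$, linearly locally connected carpet satisfying a combinatorial Loewner-type condition admits a quasisymmetric embedding onto a round carpet. Producing a quasisymmetric (rather than merely topological) embedding of $\partial\G$ into the sphere from the sole hypothesis $\cdAR(\G)<2$ is the hard point, so quoting such a statement essentially assumes the theorem in the carpet case; in the present paper this difficulty never arises because hyperconvexity supplies the embeddings $\xi_z$ together with uniform quasi-M\"obius control. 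A smaller but real problem: extending the peripheral stabilizer actions into the complementary disks cannot be done by applying the Ahlfors--Beurling extension (Theorem \ref{thm:DEextension}) element by element, since that extension is not equivariant and hence does not yield a homomorphism on the stabilizer; this is why the paper uses Markovic's Theorem \ref{t.Markovic} for the stabilizers (an equivariant alternative would be the Douady--Earle extension), reserving Ahlfors--Beurling for the coset-representative maps.
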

A subgroup of a hyperbolic group with planar boundary $\deG$ is a \emph{carpet subgroup} if its boundary is homeomorphic to a Sierpinski carpet. Haïssinski proves
\begin{thm}[{\cite[Theorem 1.13]{Ha15}}]\label{t.Hei2}
 Let $\G$ be a non-elementary hyperbolic group with planar boundary with no elements of order two. The following conditions are equivalent.
\begin{itemize} 
\item $\G$ is virtually isomorphic to a convex-cocompact Kleinian group. 
\item Any quasi-convex carpet subgroups H of $\G$ satisfies $\cdAR(H)<2$.
\end{itemize}
\end{thm}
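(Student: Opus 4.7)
The plan is to prove both directions of the equivalence separately. The forward implication will follow from classical Kleinian group theory plus quasi-symmetric invariance of $\cdAR$, whereas the converse is the main content and will be obtained by combining Theorem \ref{t.Hei1} with a JSJ-type reduction.

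For the forward direction, suppose $\Gamma$ is virtually isomorphic to a convex-cocompact Kleinian group $\Gamma_0$. Passing to finite-index subgroups and quotienting by finite kernels (which does not change $\cdAR$), we may argue with a convex-cocompact Kleinian realisation directly. Any quasi-convex subgroup $H<\Gamma$ corresponds (via the induced quasi-isometry and quasi-symmetric identification of Gromov boundaries) to a quasi-convex, hence convex-cocompact, subgroup $H_0<\Gamma_0$, whose limit set $\Lambda(H_0)\subset\CP$ is a Sierpinski carpet quasi-symmetric to $\partial H$. Sullivan--Tukia gives $\mathrm{Hff}(\Lambda(H_0))<2$ since $H_0$ is not a lattice (its limit set is not all of $\CP$), and Sullivan's Patterson--Sullivan construction makes $\Lambda(H_0)$ Ahlfors regular of dimension $\mathrm{Hff}(\Lambda(H_0))$. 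Since $\cdAR$ is a quasi-symmetric invariant bounded above by the Ahlfors regular dimension of any metric in its quasi-symmetric class, this gives $\cdAR(H)<2$.

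For the backward direction, assume every quasi-convex carpet subgroup of $\Gamma$ has $\cdAR<2$. By Theorem \ref{t.Hei1} it suffices to establish $\cdAR(\Gamma)<2$. First reduce to the one-ended case: if $\Gamma$ has more than one end, Stallings' theorem writes $\Gamma$ as a graph of groups with finite edge groups and one-ended vertex groups, and the quasi-convex carpet subgroups of the vertex groups remain quasi-convex carpet subgroups of $\Gamma$, so the hypothesis passes down; AR conformal dimension behaves well under such splittings. If $\Gamma$ is virtually cyclic or has circle or sphere boundary, the conclusion is immediate from classical results (Tukia--Gabai--Casson--Jungreis in the circle case) or from Bonk--Kleiner's characterisation (in the sphere case, where $\cdAR=2$ would force a Kleinian model already). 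If $\partial\Gamma$ is itself a carpet, $\Gamma$ is a quasi-convex carpet subgroup of itself, so the hypothesis gives $\cdAR(\Gamma)<2$ directly.

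The core case is therefore $\Gamma$ one-ended, planar, with $\partial\Gamma$ neither a circle, a sphere, nor a carpet. Apply Bowditch's JSJ decomposition: $\Gamma$ splits as a graph of groups over virtually cyclic edge groups into maximal virtually cyclic, hanging Fuchsian, and rigid vertex groups. All vertex groups are quasi-convex. Hanging Fuchsian pieces have circle boundary with $\cdAR=1$. Rigid vertex groups with planar boundary have no local cut points and no global cut pair structure, which topologically forces their boundaries to be circles, carpets, or spheres; only the first two occur here and by the standing hypothesis (and the circle case) each has $\cdAR<2$. A combination theorem for $\cdAR$ over a hyperbolic graph of groups with virtually cyclic edge groups, whose vertex groups all have $\cdAR<2$, yields $\cdAR(\Gamma)<2$. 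The assumption of no $2$-torsion enters precisely to guarantee the orientability conditions needed for Bowditch's JSJ machine and for the quasi-symmetric uniformization step used in the combination.

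The main obstacle is the combination theorem: given quasi-symmetric uniformizations of the JSJ vertex boundaries by Ahlfors regular metrics of dimension strictly less than $2$, one must produce a single Ahlfors regular metric of dimension less than $2$ in the quasi-symmetry class of $\partial\Gamma$. This requires carefully gluing vertex models along the virtually cyclic edge attachments so that the resulting metric is Ahlfors regular of controlled dimension, which is Ha\"issinsky's principal technical contribution and cannot be shortcircuited by the earlier material in this paper.
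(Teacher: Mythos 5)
You should first note that the paper contains no proof of this statement at all: Theorem \ref{t.Hei2} is imported verbatim from Ha\"issinsky \cite[Theorem 1.13]{Ha15} and used as a black box in the proof of Theorem \ref{t.kleinian}, so there is no in-paper argument to compare yours against; what you are really proposing is a reconstruction of Ha\"issinsky's theorem. Your forward direction is essentially sound: a quasi-convex carpet subgroup $H$ corresponds to a convex-cocompact Kleinian group $H_0$ whose limit set is a proper subset of $\CP$, Sullivan--Tukia gives ${\rm Hff}(\Lambda(H_0))<2$, the Patterson--Sullivan measure makes $\Lambda(H_0)$ Ahlfors regular of that dimension, and quasi-symmetric invariance of $\cdAR$ (together with the quasi-M\"obius identification of $\partial H$ with $\Lambda(H_0)$) yields $\cdAR(H)<2$.

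The backward direction, however, has a genuine gap: all of the difficulty is concentrated in the asserted ``combination theorem for $\cdAR$ over a hyperbolic graph of groups with virtually cyclic edge groups,'' which you do not prove and which is not available as a citable black box; as you yourself concede, this is precisely Ha\"issinsky's principal technical contribution, so the proposal assumes the crux of the theorem rather than establishing it. Two of the reduction steps are also shaky. The claim that ``AR conformal dimension behaves well'' under Stallings splittings with finite edge groups needs an actual statement and reference (results of Carrasco--Mackay type), not an assertion. More seriously, the sphere case cannot be dispatched by Bonk--Kleiner: their theorem produces a Kleinian model only when the Ahlfors regular conformal dimension is \emph{attained}, and for a group with $\partial\G\cong S^2$ one has $\cdAR(\G)=2$ with attainment unknown in general, so concluding a Kleinian model there would amount to resolving Cannon's conjecture. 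In the present paper this issue is moot only because ``planar'' is used to exclude the sphere (which is why $\deG=\CP$ is treated separately in \S\ref{ss.sphere} via Sullivan's Theorem \ref{t.Sullivan}), but then your case analysis should say so rather than invoke Bonk--Kleiner. As it stands, the proposal is an outline of a plausible strategy whose essential step is deferred to the very reference being reconstructed.
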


\begin{proof}[Proof of Theorem \ref{t.kleinian}]
The result holds if $\deG=\CP$ by Case 1, we can thus assume (using Proposition \ref{prop:map xi}) that $\G$ has planar boundary.

Since $\G$ admits an Anosov representation, it is virtually linear, and thus, by Selberg's lemma, it admits a torsion free subgroup of finite index. The $k$-hyperconvex representation $\rho:\G\to\PSL(d,\C)$ restricts to a $k$-hyperconvex representation of any quasi-convex carpet subgroup $H$ of $\G$, which is then virtually Kleinian by Case 2. Theorem \ref{t.Hei1} implies that $\cdAR(H)<2$, thus by Theorem \ref{t.Hei2}, $\G$ is virtually Kleinian.
\end{proof}

\section{Hausdorff dimension of limit sets}
Recall from Section \ref{s.Anosov} that we denote by $\Lambda^{k}_\rho=\xi^{k}(\deG)\subset\Gr_{k}(\C^d)$ the $k$-th limit set of a $k$-Anosov representation. If $\rho$ is $s$-Anosov for all $s\in\underline k=\{k_1,\ldots, k_l\}$ we further denote by $\calF_{\underline k}$ the flag manifold consisting of flags of subspaces of dimension $k_1\ldots, k_l$, and denote by $\Lambda^{\underline k}_\rho\subset \calF_{\underline k}$ the corresponding limit set.

The goal of the section is to prove the following refinement of Theorem \ref{thmINTRO:Hausdorff}: 
\begin{thm}\label{p.Hffub}
Let $\rho:\G\to\PSL(d,\C)$ be  $s$-hyperconvex. Then the two-dimensional Hausdorff measure of $\Lambda_\rho^{{d-s}}$ vanishes, unless $\Gamma$ is virtually isomorphic to a uniform lattice in ${\rm PSL}(2,\mb{C})$. If we additionally assume that the boundary of $\G$ is a Sierpinski carpet or a circle and that $\rho:\G\to\PSL(d,\C)$ is $s$-hyperconvex for all $s\in\{k_1, \ldots, k_l\}$ then 
$${\rm Hff}(\Lambda_\rho^{{\underline t}})<2$$
where $\underline t=\{d-k_1,\ldots, d-k_l\}$.
\end{thm}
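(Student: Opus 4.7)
My plan is to handle the two assertions separately, leveraging Theorem \ref{t.kleinian} throughout and Theorem \ref{thm:Leb0} for the first part.

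For the first assertion, I would split on whether $\deG$ is homeomorphic to $\CP$. In the sphere case, the uniformly quasi-conformal action $\rho_z$ of $\Gamma$ on $\mb{P}(z^{s+1}/z^{s-1})\cong\CP$ built in \S\ref{ss.sphere} has limit set the whole sphere, so by Sullivan's theorem \ref{t.Sullivan} $\rho_z(\Gamma)$ is quasi-conformally conjugate to a discrete subgroup of $\PSL(2,\C)$ acting cocompactly on $\CP$, i.e., a uniform lattice, placing us in the exceptional case of the statement. Otherwise Theorem \ref{thm:Leb0} gives that the Lebesgue measure of $\Lambda_z^s$ vanishes, which coincides with vanishing of two-dimensional Hausdorff measure in the ambient $\CP$. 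To propagate this to $\Lambda_\rho^{d-s}\subset\Gr_{d-s}(\C^d)$ I would verify that the tangent projection $x^{d-s}\mapsto[x^{d-s}\cap z^{s+1}]$ is locally bi-Lipschitz on $\Lambda_\rho^{d-s}$: the projection is smooth wherever $x^{d-s}$ is transverse to $z^{s-1}$ (guaranteed by the Anosov property), $s$-hyperconvexity rules out tangential degeneracies, and the cocompact action of $\Gamma$ on $\deG^{(3)}$ yields uniform two-sided Jacobian bounds, after which bi-Lipschitz invariance of two-dimensional Hausdorff measure finishes the argument.

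For the second assertion, Theorem \ref{t.kleinian} produces a finite-index subgroup $\Gamma_0<\Gamma$ with a convex-cocompact Kleinian representation $\rho^K:\Gamma_0\to\PSL(2,\C)$ whose limit set $\Lambda^K\cong\deG_0$ is a Sierpinski carpet or a circle, in particular a proper subset of $\CP$; Sullivan \cite{Sul84} and Tukia \cite{Tuk84} then yield ${\rm Hff}(\Lambda^K)<2$. For each $s\in\{k_1,\ldots,k_l\}$, Case 2 of the proof of Theorem \ref{t.kleinian} yields a quasi-conformal homeomorphism of $\CP$ conjugating the restriction of $\rho^s_z$ to $\Gamma_0$ to $\rho^K(\Gamma_0)$ and carrying $\Lambda_z^s$ onto $\Lambda^K$. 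Astala's planar quasi-conformal distortion theorem preserves the property ${\rm Hff}(\cdot)<2$, so ${\rm Hff}(\Lambda_z^s)<2$ for every $s$; the bi-Lipschitz tangent projection from Part 1 then propagates this to ${\rm Hff}(\Lambda_\rho^{d-s})<2$.

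The main obstacle is assembling the individual bounds ${\rm Hff}(\Lambda_\rho^{d-k_i})<2$ into a single bound on the flag limit set $\Lambda_\rho^{\underline t}\subset\calF_{\underline t}$, since the naive embedding $\Lambda_\rho^{\underline t}\hookrightarrow\prod_i\Lambda_\rho^{d-k_i}$ is too crude when $l>1$. I would proceed by fixing an index $j$ and arguing that the smooth projection $\calF_{\underline t}\to\Gr_{d-k_j}(\C^d)$ restricts to a bi-Lipschitz homeomorphism $\Lambda_\rho^{\underline t}\to\Lambda_\rho^{d-k_j}$; equivalently, that each $\xi^{d-k_i}$ with $i\ne j$ is Lipschitz with respect to the pull-back metric on $\deG$ via $\xi^{d-k_j}$. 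To establish this Lipschitz bound I would use the wedge-power correspondence of Propositions \ref{prop:j to 1i} and \ref{prop: bundles equivalent wedge} to reformulate each $\xi^{d-k_i}$ as a $1$-hyperconvex boundary map in a larger projective space, and then run a cross-ratio compactness argument on $\deG^{(3)}$ in the spirit of Lemma \ref{lem: action on 6-tuples} and Proposition \ref{l.uqc}. Granting this bi-Lipschitz equivalence, ${\rm Hff}(\Lambda_\rho^{\underline t})={\rm Hff}(\Lambda_\rho^{d-k_j})<2$.
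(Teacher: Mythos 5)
Your treatment of the first assertion and of the fiberwise bound ${\rm Hff}(\Lambda^s_z)<2$ follows the paper's route (tangent projection, Theorem \ref{thm:Leb0}, Sullivan--Tukia plus quasi-conformal invariance of dimension $<2$), but two points need attention. The lesser one: your justification of the bi-Lipschitz property of the tangent projection via ``uniform two-sided Jacobian bounds'' does not parse, since the map $x^{d-s}\mapsto[x^{d-s}\cap z^{s+1}]$ drops dimension, so its restriction to the limit set is Lipschitz for soft reasons but the \emph{lower} bound is the whole content; the paper obtains it only for $s=1$ (Proposition \ref{p.projLip}, using that $1$-hyperconvexity keeps $x^{d-1}\cap y^{d-1}$ uniformly away from $z^2$ so that the dual Fubini--Study distance is computed exactly by the slice) and handles general $s$ by first reducing to $s=1$ through $\wedge^s\rho$ (Proposition \ref{prop:j to 1i}). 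Your direct argument for general $s$ would need to be rebuilt along these lines.

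The genuine gap is in your assembly step for $\Lambda_\rho^{\underline t}$. You propose that the projection $\calF_{\underline t}\to\Gr_{d-k_j}(\C^d)$ restricts to a \emph{bi-Lipschitz} homeomorphism of $\Lambda_\rho^{\underline t}$ onto $\Lambda_\rho^{d-k_j}$, i.e.\ that each $\xi^{d-k_i}$ is Lipschitz for the metric pulled back via $\xi^{d-k_j}$. This is false in general: the identifications $\xi^{d-k_i}\circ(\xi^{d-k_j})^{-1}$ are governed by \emph{different} singular value gaps and are typically only H\"older, which is exactly why the quantities ${\rm Hff}(\Lambda_\rho^{d-k_i})$ can differ for different $i$ --- something a bi-Lipschitz projection onto a single factor would forbid after composing with the remaining (Lipschitz) factor projections. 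Moreover, the mechanism you propose to prove the Lipschitz bound, a cross-ratio compactness argument on $\deG^{(3)}$ as in Lemma \ref{lem: action on 6-tuples} and Proposition \ref{l.uqc}, can only ever yield quasi-M\"obius (hence H\"older-type) control, which does not preserve Hausdorff dimension. The paper closes this step by quoting results that require real analytic input beyond bi-Lipschitz geometry: \cite[Theorem 5.14]{PSW:HDim_hyperconvex} identifies ${\rm Hff}(\Lambda_\rho^{d-s})$ with the critical exponent $h^{\mathrm a_{d-s}}_\rho$, and \cite[Corollary 5.13]{PSamb} shows that under the hyperconvexity hypotheses ${\rm Hff}(\Lambda_\rho^{\underline t})=\max_j h_{d-k_j}(\rho)$, so that the flag bound follows from the individual Grassmannian bounds. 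Without invoking these (or reproving them), your argument does not establish the second assertion.
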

The assumption that the boundary of $\G$ is a Sierpinski carpet is needed in the following proposition, which builds on the work in Section \ref{ss.Sierpinski}:
\begin{prop}\label{p.HffLxzero}
Let $\rho:\G\to\PSL(d,\C)$ be  $1$-hyperconvex. If $\deG$ is a Sierpinski carpet or a circle, then  ${\rm Hff}(\Lambda_z^1)<2$ for every $z\in\deG$.
\end{prop}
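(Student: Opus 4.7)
The plan is to combine the uniformly quasi-conformal extension constructed in Section \ref{ss.Sierpinski} with Sullivan's linearization Theorem \ref{t.Sullivan}, the classical Sullivan--Tukia bound on the Hausdorff dimension of limit sets of convex-cocompact Kleinian groups, and Astala's sharp dimension distortion theorem for planar quasi-conformal maps. The strategy is to push $\Lambda_z^1$ into $\CP$ so that it becomes the limit set of a genuine convex-cocompact Kleinian group that is not a uniform lattice, read off the dimension bound there, and pull it back.

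First, since $1$-hyperconvex is the case $k=1$ of the general framework, we have $z^{k-1}=\{0\}$ and $\P(z^{k+1}/z^{k-1})=\P(z^{2})\cong\CP$. Because $\deG$ is a Sierpinski carpet or a circle, Proposition \ref{p.extSier} produces a uniformly quasi-conformal action
\[
\hat\rho_{z}:\Gamma\to\Homeo\bigl(\P(z^{2})\bigr)
\]
that extends the topological action $\rho_z$ on $\xi_z(\deG)=\Lambda_z^{1}$, and whose kernel is finite (since it is contained in the kernel of the $\Gamma$-action on $\deG$). Sullivan's theorem then yields a quasi-conformal homeomorphism $h:\P(z^{2})\to\CP$ conjugating $\hat\rho_z(\Gamma)$ to a discrete subgroup $\Gamma_{0}<\PSL(2,\C)$.

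Next I would identify the limit set of $\Gamma_0$ and apply Sullivan--Tukia. Since $h$ conjugates the actions, $h(\Lambda_z^{1})$ is the minimal closed $\Gamma_0$-invariant subset of $\CP$, i.e.\ the limit set of $\Gamma_0$, and the topological dynamics on it is the standard convergence action on the Gromov boundary of $\Gamma$. In particular $\Gamma_0$ is a convex-cocompact Kleinian group, and since $\Lambda_z^1$ is homeomorphic to a Sierpinski carpet or a circle in $\CP$, its quasi-conformal image $h(\Lambda_z^1)$ is a proper subset of $\CP$; hence $\Gamma_0$ is not a uniform lattice. By the Sullivan--Tukia theorem (\cite{Sul84,Tuk84}), the Hausdorff dimension of the limit set of such a group is strictly less than $2$:
\[
\mathrm{Hff}\bigl(h(\Lambda_z^{1})\bigr)<2.
\]

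Finally I would transport this bound back to $\Lambda_z^1$ through the quasi-conformal map $h^{-1}$. Astala's sharp area-distortion theorem for planar $K$-quasi-conformal homeomorphisms implies that $\mathrm{Hff}(A)<2$ if and only if $\mathrm{Hff}(h(A))<2$ for any compact $A\subset\CP$; applying this to $A=\Lambda_z^{1}$ concludes the argument. The main obstacle is the very last step: it is crucial that the dimension-distortion theorem works \emph{exactly} at the critical exponent $2$, since a priori a quasi-conformal map on $\CP$ can change Hausdorff dimension by a multiplicative factor, and only Astala's sharp bound (or the qualitative Gehring--V\"ais\"al\"a statement that $\dim<2$ is quasi-conformally invariant) guarantees that proper limit sets cannot be inflated to full dimension. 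Everything else in the proof is an application of already-developed machinery from Section \ref{ss.Sierpinski} together with classical Kleinian group theory.
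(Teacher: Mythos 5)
Your proposal is correct and follows essentially the same route as the paper: extend the action via Proposition \ref{p.extSier}, linearize with Sullivan's Theorem \ref{t.Sullivan} to obtain a convex-cocompact Kleinian group, apply the Sullivan--Tukia bound to its limit set, and transport the dimension bound back through the quasi-conformal conjugacy. The paper invokes the Gehring--V\"ais\"al\"a estimate rather than Astala's sharp distortion theorem for the last step, but as you note these are interchangeable here, and your explicit observation that the conjugated group is not a lattice (since its limit set is proper) is a point the paper leaves implicit.
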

\begin{proof}

Recall that, by Proposition \ref{p.extSier}, we can extend the action of $\Gamma$ on $\Lambda^1_z$ (induced by conjugating via $\xi_z$ the action of $\Gamma$ on its boundary) to a uniformly quasi-conformal action on $\mb{P}(z^2)$ and that, by {Sullivan's Theorem \ref{t.Sullivan}}, each such action is quasi-conformally conjugate to a Kleinian action. In other words, there exists a Kleinian representation $\kappa_z:\Gamma\to{\rm PSL}(z^2)$ and a quasi-conformal homeomorphism $f:\mb{P}(z^2)\to\mb{P}(z^2)$ such that $f\rho_z=\kappa_zf$. The representation $\kappa_z$ is convex-cocompact as it comes with an equivariant embedding $f\circ\xi_z:\partial\Gamma\to\mb{P}(z^2)$ and it is well-known that this condition is equivalent to being convex-cocompact (see e.g., \cite[Corollary 32.1]{CanaryLectureNotes}).

In particular, $\Lambda_z^1$ is the image under a quasi-conformal map of the limit set of $\kappa_z$ which, by work of Sullivan \cite{Sul84} and Tukia \cite{Tuk84}, has Hausdorff dimension $\delta<2$. By classical estimates of Gehring and Väisälä \cite[Theorem 12]{GV}, the image of a closed set of Hausdorff dimension smaller than 2 under a quasi-conformal map has still Hausdorff dimension smaller than 2.
\end{proof}
In order to deduce the analogous result for the limit set $\Lambda^1_\rho\subset\CPd$ we use the next proposition, adapting the key arguments in \cite[Proposition 7.3]{PSW:HDim_hyperconvex}  to our context. Given $z^1\in\CPd$, we denote by $\Gr_{d-1}^z(\C^d)$ the open set consisting of subspaces transverse to $z^1$, and consider the map
\[\begin{array}{cccc}
\Xi:&\Gr_{d-1}^z(\C^d)&\to&\P(z^2)\\
&y^{d-1}&\mapsto&[y^{d-1}\cap z^{2}].
\end{array}\]
\begin{prop}\label{p.projLip}
Let $\rho:\G\to\PSL(d,\C)$ be $1$-hyperconvex. Choose $z\in\deG$ and  an open set $U\subset\deG\setminus z$ with  compact closure in $\deG\setminus z$. Then  the restriction of the map $\Xi$ to $\xi^{d-1}( U)$ induces a  bi-Lipschitz map:

\[\Xi:\xi^{d-1}( U)\to \Xi(\xi^{d-1}( U))\subset \xi^{1}_{z}(\deG)\subset \P(z^{2}).\]
\end{prop}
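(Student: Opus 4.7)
The plan is to establish upper and lower Lipschitz bounds separately, following the structure of \cite[Proposition 7.3]{PSW:HDim_hyperconvex}. For the upper bound, the map $\Xi$ is algebraic and regular on $\Gr_{d-1}^z(\C^d)$: when $y^{d-1}\pitchfork z^1$, the intersection $y^{d-1}\cap z^2$ is a line depending holomorphically on $y^{d-1}$. Since $\rho$ is $(d-1)$-Anosov (which is implied by $1$-Anosov), $\xi^{d-1}(\overline U)$ is a compact subset of $\Gr_{d-1}^z(\C^d)$, on which $\Xi$ is smooth. Compactness then yields a uniform Lipschitz constant for $\Xi$ on a neighborhood of $\xi^{d-1}(\overline U)$.

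For the lower bound, I would argue by compactness--contradiction. Assuming there exist sequences $x_n\neq y_n$ in $\overline U$ with
\[
\frac{d(\Xi\xi^{d-1}(x_n),\Xi\xi^{d-1}(y_n))}{d(\xi^{d-1}(x_n),\xi^{d-1}(y_n))}\longrightarrow 0,
\]
extract subsequences so that $x_n\to x_*$ and $y_n\to y_*$ in $\overline U$. In the case $x_*\neq y_*$, the injectivity of $\xi^1_z$ from Proposition~\ref{prop:map xi} (a consequence of $1$-hyperconvexity) forces $\xi^1_z(x_*)\neq\xi^1_z(y_*)$, so the numerator stays bounded below while the denominator is bounded above, contradicting the vanishing ratio.

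The remaining case $x_*=y_*$ is the main obstacle. In local holomorphic coordinates around $\xi^{d-1}(x_*)$ identifying a neighborhood in $\Gr_{d-1}^z(\C^d)$ with $\Hom(\xi^{d-1}(x_*),z^1)\cong\C^{d-1}$, the map $\Xi$ becomes a linear evaluation $\phi\mapsto\phi(e)$, where $e$ spans the line $\xi^{d-1}(x_*)\cap z^2$. To rule out that $\xi^{d-1}(x_n)-\xi^{d-1}(y_n)$ asymptotically lies in the kernel of this evaluation, I would establish matched bi-H\"older estimates on $\overline U$: both $\xi^{d-1}$ and $\xi^1_z$ are bi-H\"older with the same exponent $\alpha>0$ associated to the first singular value gap of $\rho$. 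The upper H\"older bound for $\xi^{d-1}$ is the standard Anosov estimate; the matching lower H\"older bound for $\xi^1_z$ is the substantive point and would come from a $\Gamma$-equivariance argument using the M\"obius cocycle \eqref{eqn: mobius cocycle}: choose $\gamma_n\in\Gamma$ so that $\gamma_n^{-1}(x_n,y_n,z)$ lies in a fixed compact fundamental domain for the $\Gamma$-action on $\deG^{(3)}$, then use uniform separation of $\xi^1_{\gamma_n^{-1}z}(\gamma_n^{-1}x_n)$ from $\xi^1_{\gamma_n^{-1}z}(\gamma_n^{-1}y_n)$ together with bounds on the cocycle derivative to transfer a lower bound back to $d(\xi^1_z(x_n),\xi^1_z(y_n))\gtrsim d(x_n,y_n)^\alpha$. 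Combining the two H\"older estimates gives
\[
d(\Xi\xi^{d-1}(x),\Xi\xi^{d-1}(y))\gtrsim d(x,y)^\alpha\gtrsim d(\xi^{d-1}(x),\xi^{d-1}(y))
\]
on $\overline U$, contradicting the assumed vanishing ratio and completing the lower bound. The hardest step will be establishing this matched lower H\"older estimate for $\xi^1_z$; once it is in hand, the rest of the argument is formal.
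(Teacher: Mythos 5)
Your upper Lipschitz bound (smoothness of $\Xi$ on the compact set $\xi^{d-1}(\overline U)\subset\Gr_{d-1}^z(\C^d)$) is fine, and your local-coordinate analysis correctly identifies the geometric obstruction to the lower bound: the ratio degenerates exactly when the secant intersections $x_n^{d-1}\cap y_n^{d-1}$ approach $z^2$, i.e.\ when the difference of the graphing maps concentrates near the kernel of the evaluation $\phi\mapsto\phi(e)$. The gap is in the mechanism you propose to rule this out. The ``matched bi-H\"older estimates'' --- two-sided bounds $c\,d(x,y)^\alpha\le d(\xi^{d-1}(x),\xi^{d-1}(y))\le C\,d(x,y)^\alpha$ and likewise for $\xi^1_z$ \emph{with the same exponent} $\alpha$ --- are unjustified and false in general. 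For a higher-rank Anosov representation the optimal upper H\"older exponent of $\xi^{d-1}$ and the optimal lower H\"older exponent of $\xi^1_z$ with respect to a visual metric on $\deG$ are controlled by an infimum, respectively a supremum, of ratios of singular-value gaps over the group, and nothing forces them to coincide; if the lower exponent for $\xi^1_z$ is some $\beta>\alpha$, the chain $d(\xi^1_z(x),\xi^1_z(y))\gtrsim d(x,y)^{\beta}\not\gtrsim d(x,y)^{\alpha}\gtrsim d(\xi^{d-1}(x),\xi^{d-1}(y))$ breaks precisely for nearby points, which is the case $x_*=y_*$ you are trying to handle. Note also that two-sided H\"older bounds with a common exponent for both maps would instantly give the bi-Lipschitz conclusion (and pin down both Hausdorff dimensions), so you are in effect assuming something at least as strong as the proposition; your equivariance/cocycle argument would produce \emph{some} lower H\"older exponent for $\xi^1_z$, but not a matched one.

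The paper's proof avoids the visual metric on $\deG$ altogether. The key identity is that if $h_{x,y}$ is a Hermitian product for which $x^{d-1}\cap y^{d-1}$ is orthogonal to $z^2$, then $\Xi$ is an \emph{exact isometry} for the induced distances: $d^*_{h_{x,y}}(x^{d-1},y^{d-1})=d_{h_{x,y}}([x^{d-1}\cap z^2],[y^{d-1}\cap z^2])$. One-hyperconvexity gives a uniform lower bound on the distance between $x^{d-1}\cap y^{d-1}$ and $z^2$ for $x,y$ ranging over the compact closure of $U$ (uniformly through the limit $x\to y$), so the adapted products $h_{x,y}$ stay in a bounded region of the symmetric space and all the induced metrics are uniformly bi-Lipschitz to the reference one; the bi-Lipschitz property of $\Xi$ follows. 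This is the direct way to convert the transversality your local-coordinate picture requires into a Lipschitz (not merely H\"older) lower bound, and I would replace the H\"older detour with it.
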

\begin{proof}

Any two Riemannian metrics on $\mb{CP}^{d-1}$ are bi-Lipschitz equivalent. We consider on $\mb{CP}^{d-1}$ the Fubini--Study metric induced by the choice of an Hermitian scalar product $h$ on $\C^d$. This is a Riemannian metric  whose associated distance is given by 
$$\cos d_h([x],[y])=\frac{|h(x,y)|}{\|x\|\|y\|},$$
where we denote by $\|\cdot\|$ the norm associated to $h$ so that $\|x\|^2=h(x,x)$. It is easy to verify that the distance $d^*_h$ induced by the dual Hermitian product on $(\mb{CP}^{d-1})^*=\Gr_{d-1}(\C^d)$ agrees with the Hausdorff distance on closed sets of   $\mb{CP}^{d-1}$, namely for $x^{d-1},y^{d-1}\in \Gr_{d-1}(\C^d)$,
$$d^*_h(x^{d-1},y^{d-1})=\max_{v\in x^{d-1}}\min_{w\in y^{d-1}}d_h([v],[w]).$$

Furthermore if $z^2$ is the orthogonal to $x^{d-1}\cap y^{d-1}$ with respect to the  Hermitian product $h$,  it holds
\begin{equation}\label{e.Fubini1}
d^*_h(x^{d-1},y^{d-1})=d_h([x^{d-1}\cap z^2],[y^{d-1}\cap z^2]).
\end{equation}

The symmetric space associated to $\PSL_d(\C)$ parametrizes all possible Hermitian products up to rescaling, and if two scalar products $h_1,h_2$ are at uniformly bounded distance, the identity 
\begin{equation}\label{e.Fubini2}(\mb{CP}^{d-1}, d_{h_1})\to (\mb{CP}^{d-1}, d_{h_2})
\end{equation}
is uniformly bi-Lipschitz.

Let now $\rho:\G\to\PSL_d(\C)$ be 1-hyperconvex, and choose $z$ and $\mathcal U$ as in the statement of the proposition. It follows from the definition of 1-hyperconvexity that there is a lower bound on the distance between 
\[x^{d-1}\cap y^{d-1} \text{ and } z^2,\]
that holds uniformly for all $x$ and $y \in  U$.
In particular for every $x,y\in U$, there exists an Hermitian product $h_{x,y}$ with respect to which   $x^{d-1}\cap y^{d-1}$ and $z^2$ are orthogonal, and such that $h_{x,y}$ lies at uniform (only depending on $\rho$, $z$, and  $ U$) distance from the original Hermitian product $h$. Since by Equation \eqref{e.Fubini1} we have 
$$d^*_{h_{x,y}}(x^{d-1},y^{d-1})=d_{h_{x,y}}(\Xi(x^{d-1}),\Xi(y^{d-1})),$$
the claim follows from Equation \eqref{e.Fubini2}.
 \end{proof}

\begin{proof}[Proof of Theorem \ref{p.Hffub}]
First, we prove that the 2-dimensional Hausdorff measure of the Grassmannian limit set $\Lambda_\rho^{d-s}\subset\Gr_{d-s}(\C^d)$ is zero and that if $\partial\Gamma$ is a Sierpinski carpet then the Hausdorff dimension of $\Lambda_\rho^{d-s}$ is strictly smaller than 2. Then we will show how to combine these facts together with work of Pozzetti-Sambarino \cite{PSamb} and Pozzetti-Sambarino-Wienhard \cite{PSW:HDim_hyperconvex} to get the result about the Hausdorff dimension of the limit set $\Lambda_\rho^{\underline{t}}$ in the partial flag variety $\mc{F}_{\underline{t}}$.

Let us begin with the first claim. Thanks to Proposition \ref{prop:j to 1i}, we can assume without loss of generality that $s=1$. Furthermore, it is enough to find a finite open cover $\mc{U}=\{U_j\}_{j\in J}$ such that the 2-dimensional Hausdorff measures of $U_j\cap\Lambda_\rho^{d-1}$ vanish for all $j\in J$, 
respectively, in the case of Sierpinski carpet boundary, such that the Hausdorff dimension of  $U_j\cap\Lambda_\rho^{d-1}$ is strictly smaller than 2 for all $j\in J$.
We produce such a good covering using Proposition \ref{p.Hffub}:  we can find a finite set of points $z_1,\cdots,z_n$ and an open covering $\mc{U}=\{U_1,\cdots,U_n\}$ of $\Lambda_\rho^{d-1}=\xi^{d-1}(\partial\Gamma)$ such that the tangent projections $\xi^1(U_j)\to\mb{P}(z_j^2)$ are  bi-Lipschitz onto their image.
By Theorem \ref{thm:Leb0}, the Lebesgue measure of the images $\Lambda^{1}_{z_j}\subset\mb{P}(z_j^2)$ of the limit set $\Lambda_\rho^{d-1}$ under the tangent projections vanishes. As the Lebesgue measure on $\mb{P}(z_j^2)$ is the 2-dimensional Hausdorff measure and since bi-Lipschitz maps preserve the property of having zero 2-dimensional Hausdorff measure, the first claim follows. Analogously, if $\partial\Gamma$ is a Sierpinski carpet, then, by Proposition \ref{p.HffLxzero}, we know that the Hausdorff dimension of $\Lambda^1_{z_j}$ is strictly smaller than 2 for every $j\le n$. As bi-Lipschitz maps preserve the Hausdorff dimension, we can conclude that
${\rm Hff}(\Lambda_\rho^{d-1})<2$.
This concludes the first part of the proof. 

We now consider the limit set in the partial flag variety $\Lambda_\rho^{\underline{t}}\subset\mc{F}_{\underline{t}}$ in the case where $\partial\Gamma$ is a Sierpinski carpet. If $\rho$ is $s$-hyperconvex, Pozzetti-Sambarino-Wienhard \cite[Theorem 5.14]{PSW:HDim_hyperconvex} interpret the Hausdorff dimension of the limit set $\Lambda^{d-s}_\rho\subset\Gr_{d-s}(\C^d)$ as the  exponential growth rate of the $(d-s)$-th root for the action on the symmetric space: $h_{\rho}^{\mathrm a_{d-s}}={\rm Hff}(\Lambda_\rho^{d-s})$. We will not need the precise definition of $h_{\rho}^{\mathrm a_{d-s}}$, it will suffice to recall that, under the hyperconvexity assumptions of the theorem, Pozzetti-Sambarino prove  \cite[Corollary 5.13]{PSamb} that the Hausdorff dimension of $\Lambda^{\underline{t}}_\rho$ coincides with the maximal critical exponent: ${\rm Hff}(\Lambda_\rho^{\underline{t}})=\max_{k_j\in\underline k}\{h_{d-k_j}(\rho)\}$. Combining these results with the first part of the proof we conclude that 
\[
{\rm Hff}(\Lambda_\rho^{\underline{t}})=\max_{k_j\in\underline k}\{{\rm Hff}(\Lambda_\rho^{d-k_j})\}<2,
\]
as desired. 
\end{proof}

We conclude the paper with a concrete example in which knowing the strict inequality ${\rm Hff}(\Lambda^{\underline t}_\rho)<2$ is necessary to apply the work  of Dumas--Sanders \cite[Theorems D and E]{DumasSanders} and  deduce properties of the complex geometry of a co-compact domain of discontinuity.
Any  Anosov representation $\rho:\Gamma\to{\rm PSL}(3,\mb{C})$ admits the following co-compact Guichard--Wienhard domain of discontinuity in the full flag space $\mc{F}_{1,2}$:
\[
\Omega_\rho:=\mc{F}_{1,2}\setminus\mc{K}_\rho
\]
where
\begin{align*}
\mc{K}_\rho:=\{(\xi^1(t),H)\in\mc{F}_{1,2}\left|H\in{\rm Gr}_2(\mb{C}^3)\,,\,t\in\partial\Gamma\,,\,\xi^1(t)\subset H\right.\}\\
\cup\{(\ell,\xi^2(t))\in\mc{F}_{1,2}\left|\ell\in{\rm Gr}_1(\mb{C}^3)\,,\,t\in\partial\Gamma\,,\,\ell\subset\xi^2(t)\right.\}.
\end{align*}

We denote by $N_\rho:=\Omega_\rho/\rho(\Gamma)$ the compact quotient.

\begin{cor}
Let $\rho:\Gamma\to{\rm PSL}(3,\mb{C})$ be a \{1,2\}-hyperconvex representation. Assume that $\partial\Gamma$ is a Sierpinski carpet or a circle. Then $N_\rho$ does not admit a Kähler metric and does not admit any non-constant holomorphic map to a hyperbolic Riemann surface.
\end{cor}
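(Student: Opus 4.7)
The plan is to combine Theorem \ref{p.Hffub} with \cite[Theorems D and E]{DumasSanders}. First, I would apply Theorem \ref{p.Hffub} to $\rho$ with the indices $d=3$ and $\underline{k}=\{1,2\}$, so that $\underline{t}=\{d-k_1,d-k_2\}=\{2,1\}$ and $\mc{F}_{\underline{t}}=\mc{F}_{1,2}$. Since $\rho$ is $\{1,2\}$-hyperconvex and $\partial\Gamma$ is a Sierpinski carpet or a circle, both hypotheses of the second half of Theorem \ref{p.Hffub} are satisfied, yielding the strict bound
\[
{\rm Hff}\bigl(\Lambda_\rho^{\{1,2\}}\bigr)<2.
\]

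Second, I would translate this into a statement about the complement $\mc{K}_\rho$ of the Guichard--Wienhard cocompact domain of discontinuity $\Omega_\rho\subset\mc{F}_{1,2}$. By construction, the two pieces making up $\mc{K}_\rho$ are smoothly fibered over $\partial\Gamma$ (via $\xi^1$ and $\xi^2$, respectively) with fibers biholomorphic to $\CP$. Since Hausdorff dimension is additive for smooth compact-fiber bundles (via a standard covering argument using local trivializations), this gives
\[
{\rm Hff}(\mc{K}_\rho)<2+2=4,
\]
so $\mc{K}_\rho$ has real codimension strictly greater than $2$ inside the $6$-real-dimensional flag manifold $\mc{F}_{1,2}$.

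Finally, I would invoke \cite[Theorems D and E]{DumasSanders}, which accept precisely this sort of strict Hausdorff-dimension bound on the complement of a cocompact domain of discontinuity in a partial flag variety as input and conclude, respectively, that the quotient carries no Kähler metric and that every holomorphic map from it to a hyperbolic Riemann surface is constant. Applied to $N_\rho=\Omega_\rho/\rho(\Gamma)$, this is exactly the corollary. The main subtlety, compared with the $\mathrm{PSL}(4,\mb{C})$ corollary stated in the introduction, is that mere vanishing of the $2$-dimensional Hausdorff measure of $\Lambda_\rho^{\{1,2\}}$ does not suffice here: the Dumas--Sanders threshold requires a strict dimensional inequality on the $\partial\Gamma$-parameterized pieces, which is precisely what the Sierpinski carpet or circle hypothesis unlocks via the second half of Theorem \ref{p.Hffub}.
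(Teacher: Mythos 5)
Your proposal is correct and follows exactly the route the paper intends: Theorem \ref{p.Hffub} applied with $\underline{k}=\{1,2\}$ gives ${\rm Hff}(\Lambda_\rho^{\{1,2\}})<2$, the $\CP$-fibered structure of the two pieces of $\mc{K}_\rho$ promotes this to a codimension-greater-than-two bound on the complement of $\Omega_\rho$, and Dumas--Sanders Theorems D and E then apply. Your closing remark correctly identifies the point the paper itself emphasizes, namely that the strict inequality (rather than mere vanishing of $2$-dimensional Hausdorff measure) is what the Sierpinski-carpet or circle hypothesis is needed for.
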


\bibliography{references}{}
\bibliographystyle{amsalpha.bst}

\Addresses
\end{document}